\numberwithin{equation}{section}
\newcommand{\kdifform}[2]{#1^{(#2)}} 
\newcommand{\kdifformh}[2]{#1^{(#2)}_{h}} 
\newcommand{\reconstruction}{\mathcal{I}} 
\newcommand{\reduction}{\mathcal{R}} 
\newcommand{\kformspace}[1]{\Lambda^{#1}} 
\newcommand{\kformspacedomainh}[2]{\Lambda_h^{#1}(#2)} 
\newcommand{\incidenceboundary}[2]{\mathsf{E}_{(#1,#2)}} 
\newcommand{\incidencederivative}[2]{\mathsf{E}^{(#1,#2)}} 
\newcommand{\innerspace}[3]{\left(#1,#2\right)_{#3}} 
\newcommand{\matrixoperator}[1]{#1}
\newcommand{\projection}{\pi_{h}} 
\newcommand{\manifold}[1]{\mathcal{#1}} 
\newcommand{\tangentspace}[1]{T_{p}\manifold{#1}} 
\newcommand{\vectorfieldspace}[1]{\Xi_{#1}} 
\newcommand{\ederiv}{\mathrm{d}} 
\renewcommand{\eqref}[1]{(\ref{#1})} 
\newcommand{\figref}[1]{Figure~\ref{#1}} 
\newcommand{\secref}[1]{Section~\ref{#1}} 
\newcommand{\theoremref}[1]{Theorem~\ref{#1}} 
\newcommand{\exampleref}[1]{Example~\ref{#1}} 
\newtheorem{theorem}{Theorem}
\newtheorem{proof}{Proof}
\newtheorem{definition}{Definition}
\newtheorem{example}{Example}
\newtheorem{remark}{Remark}
\begin{document}

\begin{frontmatter}


\author[tue]{Artur Palha\corref{cor1}}
\ead{A.Palha@tue.nl}

\author[tudelft]{Marc Gerritsma}
\ead{M.I.Gerritsma@tudelft.nl}

\cortext[cor1]{Corresponding author}

\address[tue]{Eindhoven University of Technology, Department of Mechanical Engineering, section of Control Systems Technology P.O. Box 513 5600 MB Eindhoven, The Netherlands}
\address[tudelft]{Delft University of Technology, Faculty of Aerospace Engineering, Aerodynamics Group P.O. Box 5058, 2600 GB Delft, The Netherlands}


\title{Mimetic spectral element method for Hamiltonian systems}

\begin{abstract}
There is a growing interest in the conservation of invariants when numerically solving a system of ordinary differential equations. Methods that exactly preserve these quantities in time are known as \emph{geometric integrators}. In this paper we apply the recently developed mimetic framework \cite{Kreeft2011,Palha2014} to the solution of a system of first order ordinary differential equations. Depending on the discrete Hodge-$\star$ employed, two classes of arbitrary order time integrators are derived. It is shown that  the one based on a canonical Hodge-$\star$ results in a symplectic integrator, whereas the one based on a Galerkin Hodge-$\star$ results in an energy preserving integrator. A set of numerical tests confirms these theoretical results.
\end{abstract}

\begin{keyword}
Energy-preserving methods; Symplectic methods; Mimetic methods; Spectral element method; Collocation methods; Geometric integration
\end{keyword}

\end{frontmatter}


\section{INTRODUCTION}
\label{Section::Introduction}

In the last decades there has been a significant development of discretization methods for the solution of ordinary differential equations (\emph{ODE}s). These equations are ubiquitous, appearing in particle accelerators, molecular dynamics, vortex particle methods, stock market modeling, celestial dynamics, etc. From a mathematical point of view, these problems can be generalized as a system of first order ODEs:
\begin{equation}
\frac{\ederiv \boldsymbol{y}}{\ederiv t} = \boldsymbol{h}(\boldsymbol{y},t),\quad \boldsymbol{y}(t_{0}) = \boldsymbol{y}^{0},\quad \boldsymbol{y}\in\mathcal{M}\subset\mathbb{R}^{n}\quad \text{and} \quad t\in [t_{0},t_{f}]\subset\mathbb{R}\;.
\label{eq::TestCases_ode}
\end{equation}
Where $\boldsymbol{h}(\boldsymbol{y},t) \in\mathbb{R}^{n}$ and the region $\mathcal{M}$ is the phase space of the system. For the case of Hamiltonian systems, $\boldsymbol{y} = (\boldsymbol{p},\boldsymbol{q})$, $\boldsymbol{q}\in\mathbb{R}^{m}$, $\boldsymbol{q}\in\mathbb{R}^{m}$ and \eqref{eq::TestCases_ode} takes the form:

\begin{equation}
	\frac{\ederiv \boldsymbol{y}}{\ederiv t} = \mathsf{J}^{-1}\nabla H(\boldsymbol{y}),\quad \boldsymbol{y}(t_{0}) = \boldsymbol{y}^{0},\quad \boldsymbol{y}\in\mathcal{M}\subset\mathbb{R}^{2m}\quad \text{and} \quad t\in I\subset\mathbb{R} \quad \text{with}\quad \mathsf{J} = \left[\begin{array}{cc} \mathsf{0} & \mathsf{I}_{m} \\ -\mathsf{I}_{m}  & \mathsf{0}\end{array}\right]\;, \label{eq::hamiltonian_system}
\end{equation}
where $\mathsf{I}_{m}$ is the $m\times m$ identity matrix and $h = \mathsf{J}^{-1}\nabla H(\boldsymbol{y})$.

Traditionally, the methods developed for the discrete solution of these problems derive from finite difference principles and Taylor series expansions. This approach focuses on minimizing the local truncation error associated with the discretization. No matter how small this truncation error is, it is always finite, which might lead  to a continuously growing global error. Over a sufficiently long time interval, the global error will inevitably reach unacceptable values and eventually the numerical solution will no longer resemble the analytical one. Due to this intrinsic local truncation error of the discretization process, one wishes to develop methods that are able to preserve the same qualitative properties as the continuous system. This will enable acceptable long time integration.

By \emph{qualitative properties} one refers, for example, to the undermentioned properties, \cite{budd2003}:
\begin{description}
	\item[Geometric structure:] The evolution equations preserve geometric properties, such as orthogonality of lines or surfaces, volumes in phase space, etc.
	\item[Conservation laws:] While evolving in time, certain well known quantities as mass, momentum, energy, or other less known quantities such as the radius of the trajectory or functions of the dynamical variables are preserved.
	\item[Symmetries:] Many systems remain invariant under specific transformations, e.g.: Galilean transformations (translations, reflexions and rotations), reversal symmetries (of which time reversal is an example), scaling symmetries (invariance under rescalings in either time or space).
\end{description}

The key word to retain from qualitative properties is \emph{conservation}.  When we refer to \emph{qualitative properties} we mean the invariance of certain quantities or geometric relations, during time evolution. These qualitative properties are explicit constraints that restrict the universe of allowable solution trajectories, discarding solutions which are not physically compatible with the system of equations one wishes to solve.

A simple yet illustrative example for the relevance of geometric integration is presented, among others, in \cite{budd2003} for the discretisation of the harmonic oscillator with a forward Euler scheme. 
\begin{example}\label{ex:harmonic_oscillator}
Consider the two coupled ODEs that describe the evolution of an harmonic oscillator:
\begin{equation}
	\left\{ \,
		\begin{IEEEeqnarraybox}[\IEEEeqnarraystrutmode\IEEEeqnarraystrutsizeadd{7pt}{7pt}][c]{rCl}
			\frac{\ederiv q}{\ederiv t} & = & p\\
			\frac{\ederiv p}{\ederiv t} & = & -q
		\end{IEEEeqnarraybox}
	\right.\;,\quad\mathrm{with}\,\, q(t_{0}) = q_{0},\, p(t_{0}) = p_{0}\,\,\mathrm{and}\,\, t\in[t_{0},t_{f}]\;.
	\label{eq:harmonic_oscillator}
\end{equation}
It is possible to verify that all solutions are periodic and bounded and that $\left(q^{2}+p^{2}\right)$ is a conserved quantity of the evolution. If a forward Euler discretization is applied to the system, the following discrete time evolution expressions are obtained:
\begin{equation}
	\left\{ \,
		\begin{IEEEeqnarraybox}[\IEEEeqnarraystrutmode\IEEEeqnarraystrutsizeadd{7pt}{7pt}][c]{rCl}
			q_{n+1} & = & q_{n} + \Delta t p_{n}\\
			p_{n+1} & = & p_{n} - \Delta t q_{n}
		\end{IEEEeqnarraybox}
	\right.\;,\quad\mathrm{with}\,\, q(t_{0}) = q_{0},\, p(t_{0}) = p_{0}\;.
	\label{eq:harmonic_oscillator_euler}
\end{equation}
It is straightforward to see that with this time stepping scheme the quantity $\left(q_{n}^{2}+p_{n}^{2}\right)$ grows geometrically at each time step:
\[
	q^{2}_{n+1} + p^{2}_{n+1} = (1+\Delta t^{2})(q^{2}_{n}+p^{2}_{n})\;.
\]
Due to this feature of the forward Euler discretization, the discrete solutions of the harmonic oscillator will no longer be periodic and bounded. Moreover, since up to a multiplicative constant  $\left(q^{2}+p^{2}\right)$ is the energy of the system, conservation of energy is no longer satisfied. Another relevant point is that this discretization renders the discrete system time-irreversible, in contrast to the time-reversibility of the continuous system. This can be seen by using \eqref{eq:harmonic_oscillator_euler} with a negative time step starting from $(x_{n+1},q_{n+1})$:
\begin{equation}
	\left\{ \,
		\begin{IEEEeqnarraybox}[\IEEEeqnarraystrutmode\IEEEeqnarraystrutsizeadd{7pt}{7pt}][c]{rCl}
			\tilde{q}_{n} & = & q_{n+1} - \Delta t p_{n+1} = q_{n} + \Delta t p_{n} - \Delta t (p_{n} - \Delta t q_{n}) =  q_{n}(1+\Delta t^{2}) \\
			\tilde{p}_{n} & = & p_{n+1} + \Delta t q_{n+1} = p_{n} - \Delta t (q_{n} + \Delta t p_{n})+ \Delta t q_{n+1}  = p_{n}(1+\Delta t^{2}) \,.
		\end{IEEEeqnarraybox}
	\right. 
	\label{eq:harmonic_oscillator_euler_negative_delta_t}
\end{equation}
Where we have used \eqref{eq:harmonic_oscillator_euler} to replace $q_{n+1}$ and $p_{n+1}$. Since $\tilde{q}_{n} \neq q_{n}$ and  $\tilde{p}_{n} \neq p_{n}$, this method is not time reversible.
 
On the other hand, if the implicit midpoint rule is applied to this system, the alternative discrete time evolution expressions become:
\begin{equation}
	\left\{ \,
		\begin{IEEEeqnarraybox}[\IEEEeqnarraystrutmode\IEEEeqnarraystrutsizeadd{7pt}{7pt}][c]{rCl}
			q_{n+1} & = & q_{n} \frac{4-\Delta t^{2}}{4+\Delta t^{2}} + p_{n}\frac{4\Delta t}{4+\Delta t^{2}}\\
			p_{n+1} & = & p_{n} \frac{4-\Delta t^{2}}{4+\Delta t^{2}} - q_{n}\frac{4\Delta t}{4+\Delta t^{2}}
		\end{IEEEeqnarraybox}
	\right.\;,\quad\mathrm{with}\,\, q(t_{0}) = q_{0},\, p(t_{0}) = p_{0}\;.
	\label{eq:harmonic_oscillator_euler_midpoint}
\end{equation}
For this discretization, the quantity $\left(q^{2}+p^{2}\right)$ is conserved, regardless of the time step used:
\[
	q^{2}_{n+1} + p^{2}_{n+1} = q^{2}_{n}+p^{2}_{n}\;.
\]
Contrary to the forward Euler discretization, the midpoint rule is capable of preserving the same qualitative properties of the continuous system, namely: the discrete solutions are periodic and bounded and the quantity $\left(q^{2}+p^{2}\right)$ is conserved. Additionally, it is possible to show that the discrete system is time reversible, just like the original dynamics.
\end{example}

This simple example expresses the relevance of judiciously selecting a numerical integrator. If correctly chosen, it is capable of preserving the qualitative properties of the continuous system. The approach that incorporates global characteristics of the problem being solved to construct discretization schemes with the same properties of the continuous one, gives rise to a class of time integrators referred in literature as \emph{geometric time integrators}.

In this paper, the focus is on \emph{autonomous Hamiltonian} problems, i.e. ones such that $\boldsymbol{h}(\boldsymbol{y},t) = \mathsf{J}^{-1} \nabla H \left( \boldsymbol{y} \right)$. The scalar function $ H \left( \boldsymbol{y} \right)$ is the Hamiltonian of the problem and its value is independent of time,
\begin{equation}
   H \left( \boldsymbol{y}(t) \right) \equiv  H \left( \boldsymbol{y}_{0} \right), \quad \forall t \geq t_{0}\;.
\end{equation}
In the case of isolated mechanical systems, the Hamiltonian and energy are associated. Another characteristic of Hamiltonian systems is that they are symplectic, see \cite{Hairer2006} for example. The exact flow map, $\varphi_{\tau}$, associated to a system of ODEs is defined as $\varphi_{\tau}(\boldsymbol{y}_{0}):=\boldsymbol{y}(\tau)$, where $\boldsymbol{y}(\tau)$ is the solution of the initial value problem at time $t=\tau$. A system of ODEs is symplectic if the associated Jacobian of the flow map, $\varphi^{\prime}_{\tau}$, satisties:
\begin{align}
 \varphi_{\tau}^{\prime}\left( y \right)^{T} J \varphi_{\tau}^{\prime}\left( y \right) = J.
\end{align}
Symplecticity implies, for instance, that the flow map preserves phase space volume, which imposes strong constraints on the admissible solutions. This can be stated mathematically as:
\begin{equation}
	\ederiv q(t) \wedge \ederiv p(t) = \ederiv q(t_{0}) \wedge \ederiv p(t_{0}) \label{eq:symplecticity}\;,
\end{equation} 
where $t > t_{0}$.

If we apply equation \eqref{eq:symplecticity} to the Euler discretization, \eqref{eq:harmonic_oscillator_euler}, used in Example~\ref{ex:harmonic_oscillator} we see that:
\[
	\ederiv q_{n+1}\wedge \ederiv p_{n+1} = \left(\ederiv q_{n} + \Delta t\,\ederiv p_{n}\right)\wedge\left(\ederiv p_{n} - \Delta t\, \ederiv q_{n}\right) = \ederiv q_{n}\wedge \ederiv p_{n} + \Delta t^{2}\left(\ederiv q_{n}\wedge \ederiv p_{n}\right)\,, 
\]
showing that the Euler discretization is not symplectic. On the other hand, if we apply equation \eqref{eq:symplecticity} to the midpoint rule discretization, \eqref{eq:harmonic_oscillator_euler_midpoint}, used in the same example we obtain:
\[
	\ederiv q_{n+1}\wedge \ederiv p_{n+1} = \frac{4-\Delta t^{2}}{4+\Delta t ^{2}}\ederiv q_{n}\wedge \ederiv p_{n} + \frac{4\Delta t^{2}}{4+\Delta t ^{2}}\ederiv q_{n}\wedge \ederiv p_{n} = \ederiv q_{n}\wedge \ederiv p_{n}\,,
\]
confirming the symplecticity of the midpoint rule.

Since Hamiltonian systems are fundamental in the study of many physical systems, several discrete time integrators have been developed. As said before, energy conservation and symplecticity are key properties of Hamiltonian dynamics to satisfy at a discrete level. Ideally, a numerical integrator with discrete flow map $\tilde{\varphi}_{\Delta t}$ such that $\boldsymbol{y}_{n+1} = \tilde{\varphi}_{\Delta t}\left(\boldsymbol{y}_{n} \right)$ should be symplectic and exact energy preserving. Although highly desirable, it is very difficult to have a numerical integrator that preserves all geometrical properties. In this work we take as starting point the mimetic framework \cite{Kreeft2011,Palha2014} and show that two numerical integrators of arbitrary order, one symplectic and the other exactly energy preserving, can be derived.

\subsection{Literature review}

Structure-preserving methods that aim to preserve at a discrete level the geometrical, topological and homological properties of the systems of equations being solved have gained increased popularity in the last decades, see \cite{christiansen2011} for a brief overview. Two main areas of development have emerged: one focussing on spatial discretization (mimetic/compatible discretization) and another focussing on time discretization (geometric integration).

Regarding spatial discretisations, mimetic/compatible discretizations have been proposed by several authors for the numerical solution of partial differential equations. Tonti's work \cite{tonti1975formal} was one of the first to establish the relation between differential geometry and algebraic topology in physical theories and its implications on the development of structure preserving discretizations. Tonti employed differential forms and cochains as the building blocks of his method. The relation between differential forms and cochains was established by the Whitney map ($k$-cochains $\rightarrow$ $k$-forms) and the de Rham map ($k$-forms $\rightarrow$ $k$-cochains). These operators were initially introduced by Whitney in \cite{Whitney57} and de Rham in \cite{derham1955}, respectively. The linear interpolation of cochains to differential forms on a triangular grid was established in \cite{Whitney57} employing what is now known as {\em Whitney forms}. On quadrilaterals, Robidoux, \cite{robidoux-polynomial}, and Gerritsma, \cite{gerritsma::edge_basis}, derived the edge basis functions, capable of arbitrary order interpolation and histopolation of differential forms. Subsequently, among others, Robidoux, \cite{RobidouxThesis,RobidouxAdjointGradients1996}, Hyman, \cite{HymanShashkovSteinberg97,HymanShashkovSteinberg2002,HYmanSteinberg2004}, Steinberg, \cite{Steinberg1996,SteibergZingano2009}, Shashkov, \cite{bookShashkov}, Brezzi, \cite{BrezziBuffaLipnikov2009,brezzi2010}, Desbrun, \cite{desbrun2005discrete,ElcottTongetal2007,MullenCraneetal2009}, Lipnikov, \cite{Lipnikov2014}, Perot, \cite{Perot2000}, and Pavlov, \cite{PavlovMullenetal2010}, have proposed approaches in a finite difference/volume context. We highlight the `Japanese papers' by Bossavit, \cite{bossavit:japanese_01,bossavit:japanese_02}, which serve as an excellent introduction and motivation for the use of differential forms in the description of physics and its use in numerical modelling. In a series of papers by Arnold, Falk and Winther, \cite{arnold:Quads,arnold2006finite,arnold2010finite}, a \emph{finite element exterior calculus} framework is developed. Bochev, \cite{bochev2006principles}, presents general principles of mimetic discretizations, common to finite element, finite volume and finite difference discretizations. Finally, Kreeft et al., \cite{Kreeft2011}, and Palha et al., \cite{Palha2014}, introduced a framework merging the ideas of Tonti and Bossavit, resembling a hybrid finite element/finite volume formulation of arbitrary order on curvilinear elements.

On the topic of time discretization, namely the numerical integration of Hamiltonian systems, two main paths have emerged: \textit{symplectic} and \textit{exact energy-conserving} integrators. For a detailed overview of these methods the reader is referred to \cite{Hairer2006}. Symplectic integrators are widely known and can be traced back to the work of Vogelaere \cite{DeVogelaere1956} and have been extended to Runge-Kutta methods by several authors, for example  \cite{Feng1986, Suris1989, Sanz-Serna1992}. Exact energy-preserving methods can be found in the works of Gonzalez (for Hamiltonian systems with symmetry), \cite{Gonzalez1996}, Quispel et al., \cite{quispel1996}, and McLachlan et al., \cite{Mclachlan1999}, (for general Hamiltonians). This class of methods, named \emph{discrete gradient methods}, defines a discrete counterpart of the gradient operator ensuring that energy conservation is guaranteed in each time step. \emph{Projection methods} are an alternative to discrete gradient methods.  Hairer makes an extensive introduction to  projection methods in \cite{Hairer1996} and later extends this method to include time reversibility in \cite{Hairer2000}. Unlike discrete gradient and projection methods, the \emph{averaged vector field method} (AVF method) introduced by Quispel, \cite{Quispel2008}, does not require the knowledge of the functional form of the conserved integral, requiring only information on the vector field. An extension of AVF methods to Runge-Kutta methods is presented in \cite{Celledoni2009}. Subsequently, a modification of collocation methods has been introduced in \cite{hairerEnergyPreservingCollocation2010}, extending the AVF method to arbitrary order. Following a different route, the \emph{time finite element method}, initially introduced in \cite{Argyris1969,Fried1969,hulme1972}, was solved by a mixed formulation in \cite{Borri1991} and extended to satisfy energy conservation in \cite{Betsch2000}. More recently, \textit{line integral methods}, \cite{Brugnano2012}, (of which \textit{
Hamiltonian boundary value methods}, \cite{Brugnano2012, Brugnano2014}, are a subclass)  have been introduced. These methods exactly preserve the energy of arbitrary order polynomial Hamiltonian systems.

With respect to discretization methods that combine both space and time it is relevant to mention \emph{Box schemes} or \emph{Keller box schemes}, which are a class of numerical methods originally introduced by Wendroff, \cite{wendroff1960}, for hyperbolic problems and later popularized by Keller, \cite{keller1971,keller1978}, for parabolic problems. These methods are face-based and space and time are coupled by introducing a space-time control volume. They are known to be physically accurate and several successful applications have been reported, \cite{Croisille2002,Croisille2005,Gustafsson2006,Ranjan2013}. More recently, this method has been shown to be multisymplectic (Ascher, \cite{Ascher2005}, and Frank, \cite{Frank2006}). Also, Perot, \cite{Perot2007}, established a relation between box schemes and discrete calculus and generalized it to arbitrary meshes.

\subsection{Outline}

Despite the increasing popularity and success of mimetic/compatible methods their extension to the solution of ODEs has received little attention and mainly concerns time-marching schemes for PDEs, \cite{MullenCraneetal2009,christiansen2011}. Therefore, in this work we present how the mimetic framework, \cite{Kreeft2011,Palha2014}, can be applied to the numerical solution of systems of ODEs. As seen in \cite{Kreeft2011,Palha2014}, all approximations lie in the discretisation of the Hodge-$\star$ operator, which may be defined in different ways. In this work we show that a discrete Hodge-$\star$ obtained from a canonical Hodge-$\star$ operator results in a symplectic time integrator, whereas a Galerkin Hodge-$\star$ operator gives rise to an exact energy preserving integrator.

The outline of the paper is as follows. In \secref{section:brief_introduction_differential_geometry} a brief introduction to differential geometry is presented. The focus will be on vector fields, differential forms and the operators that act upon them. In \secref{section:brief_introduction_algebraic_topology} we introduce the required elements of algebraic topology, showing a clear parallel structure to differential geometry. In \secref{section:bridging_continuous_discrete} we establish a connection between the continuous world and the discrete by introducing the mimetic projection operator and the reconstruction of $k$-forms. In Section \ref{sec:time_integrators} the mimetic integrators and their properties are discussed and  in Section \ref{section::numerical_results} the time integrators are applied to test cases. Finally, in Section \ref{Section::Conclusions} the summary of this work and further applications are discussed.


\section{A brief introduction to differential geometry}\label{section:brief_introduction_differential_geometry}
	As mentioned previously, the objective of this work is to apply the mimetic framework developed in \cite{Kreeft2011,Palha2014} to the solution of systems of ordinary differential equations. The mimetic framework is based on differential geometry for the representation of the physical quantities and the operators that act on them. For this reason, a brief introduction to the key concepts needed to understand this approach will be given. For a more detailed presentation of this topic see, for example, \cite{abraham_diff_geom,burke1985applied,flanders::diff_forms,frankel}. Since time, $t$, is the only independent variable in the problems dealt with in this work, we give special attention to manifolds of dimension $n=1$. For this reason, each new concept is first introduced for manifolds of arbitrary dimension and then the particular case of $n=1$ is presented. Throughout the text we will consider Riemannian manifolds, $\manifold{N}$, of arbitrary dimension $n$ and temporal Riemannian manifolds, $\manifold{T}$, of dimension one.
	
	Consider a smooth curve $\gamma(s)$ in $\manifold{N}$ parametrized by $s\in [-\sigma, \sigma]$, $\sigma > 0$, such that $\gamma(0) = P\in\manifold{N}$. The derivative $\dot{\gamma}(0)$ is a \emph{tangent vector} at the point $P\in\manifold{N}$. For $n$-dimensional manifolds it is possible to find a set of $n$ linearly independent tangent vectors at the point $P$, $\{\left.\vec{e}_{1}\right|_{P},\dots,\left.\vec{e}_{n}\right|_{P}\}$, see for example \cite{abraham_diff_geom,burke1985applied}. This set spans a linear vector space called \emph{tangent space} at the point $P$ and denoted by $T_{P}\manifold{N}$. Any vector at $P$ can be written as a unique linear combination of these basis vectors, i.e.,
	\begin{equation}
		\left.\vec{v}\,\right|_{P} = \sum_{i}^{n}v^{i}\left.\vec{e}_{i}\right|_{P}\,. \label{eq:tangent_vector_general}
	\end{equation}
	Where the coefficients $v^{i}$ are associated to the particular basis elements $\left.\vec{e}_{i}\right|_{P}$. If in the vicinity of the point $P$ we define a local \emph{coordinate system}, $\{x^{1},\dots,x^{n}\}$, we specify the basis of the tangent space and in this case the basis vectors are generally denoted by $\left.\frac{\partial}{\partial x^{i}}\right|_{P}$ and called \emph{coordinate basis vectors}. We can then write \eqref{eq:tangent_vector_general} for a local coordinate system as:
	\[
		\left.\vec{v}\,\right|_{P} = \sum_{i}^{n}v^{i}\left.\frac{\partial}{\partial x^{i}}\right|_{P}\,.
	\]
	It is possible to smoothly define a tangent vector at each point $P\in\manifold{N}$. This construction generates \emph{vector fields}. To simplify the notation, in what follows we will suppress the explicit reference to the point $P$.
	
	\begin{framed}
	\noindent For a temporal one dimensional Riemannian manifold, $\manifold{T}$, the tangent space at each time instant $\tau$, $T_{\tau}\manifold{T}$, has dimension one and can therefore be spanned by a single coordinate basis vector $\{\frac{\partial}{\partial t}\}$. Any tangent vector, $\vec{v}\in T_{\tau}\manifold{T}$, can be represented as:
	\[
		\vec{v} = v\,\frac{\partial}{\partial t}\;.
	\]
	where the coefficient $v$ is associated to the coordinate basis.
	\end{framed}
	
	For a Riemannian manifold $\manifold{N}$, there exists a \emph{metric} $g_{P}$, or simply $g$, that assigns to every point $P\in\manifold{N}$ an inner product $g = \left(\cdot,\cdot\right)_{P}$ on $T_{P}\manifold{N}$ which depends smoothly on $P$, i.e.
	\[
		g: T_{P}\manifold{N}\times T_{P}\manifold{N}\rightarrow \mathbb{R}\,. \label{eq:metric_definition}
	\] 
	For the coordinate basis vectors we define the \emph{metric coefficients} as:
	\begin{equation}
		g_{ij} := \left(\frac{\partial}{\partial x^{i}},\frac{\partial}{\partial x^{j}}\right)\,. \label{eq:metric_coefficients}
	\end{equation}
	Due to the bilinearity, the inner product between any two real tangent vectors $\vec{v}=\sum_{i}v^{i}\frac{\partial}{\partial x^{i}}$ and $\vec{u}=\sum_{i}u^{i}\frac{\partial}{\partial x^{i}}$ can be written as:
	\begin{align}
		\left(\vec{v},\vec{u}\right) &= \left(\sum_{i}v^{i}\frac{\partial}{\partial x^{i}},\sum_{j}u^{j}\frac{\partial}{\partial x^{j}}\right)  \\
	 	                                        &=\sum_{i,j}v^{i}u^{j}\left(\frac{\partial}{\partial x^{i}},\frac{\partial}{\partial x^{j}}\right) \\
		                                        &= \sum_{i,j}v^{i}u^{j} g_{ij}\,.
	\end{align}
	
	\begin{framed}
		\noindent If we consider a one dimensional Riemannian manifold, $\manifold{T}$, there exists only one metric coefficient:
		\begin{equation}
			g_{11} :=  \left(\frac{\ederiv}{\ederiv t},\frac{\ederiv}{\ederiv t}\right)\,. \label{eq:metric_coefficient_1d}
		\end{equation}
		To simplify the notation, instead of $g_{11}$ we will use $g$ to denote the metric coefficient for a one dimensional manifold.
	\end{framed}
	
	With any linear vector space $V$ we can associate the dual space $V^{*}$ of linear functionals acting on $V$,
	\[
		\forall \alpha \in V^{*}, \quad \alpha:V\rightarrow\mathbb{R}\,.
	\]
	In the same way, with $T_{P}\manifold{N}$ we can associate the dual space $T^{*}_{P}\manifold{N}$ of linear functionals acting on the tangent space:
	\[
	\alpha^{(1)}: T_{P}\manifold{N}\rightarrow \mathbb{R}.
	\]
	We say that $\alpha^{(1)}\in T^{\star}_{P}\manifold{N}$, where $T^{\star}_{P}\manifold{N}$ is the \emph{cotangent space}.  The elements of the cotangent space are called \emph{covectors} or alternatively \emph{exterior $1$-forms}. As was done for the tangent space, for a particular local coordinate system we can define a basis $\{\ederiv x^{1},\dots,\ederiv x^{n}\}$ and represent these exterior 1-forms as:	
	\[
		\alpha^{(1)} = \alpha_{1} \ederiv x^{1} + \dots + \alpha_{n}\ederiv x^{n}\,.
	\]
	These basis $1$-forms are such that:
	\[
		\ederiv x^{i}\left(\frac{\partial}{\partial x^{j}}\right) := \delta^{i}_{j}\,,
	\]
	where $\delta^{i}_{j}$ is the Kronecker-$\delta$.
	
	A \emph{differential 1-form}, $\alpha^{1}$, is a smooth assignment of an exterior 1-form,
	\[
		\alpha^{(1)}(x_{1},\dots,x_{n}) = \alpha_{1}(x_{1},\dots,x_{n}) \ederiv x^{1} + \dots + \alpha_{n}(x_{1},\dots,x_{n}) \ederiv x^{n}\,\in T^{\star}_{P}\manifold{N}\,, 
	\]
	to each point, $P\in\manifold{N}$. We write $\alpha^{(1)}\in\Lambda^{1}(\manifold{N})$ or simply $\alpha^{(1)}\in\Lambda^{1}$.
	
	\begin{framed}
	\noindent For a one dimensional manifold $\tau$, with $T_{t}\manifold{T}$ we can associate the dual space $T^{\star}_{t}\manifold{T}$ of linear functionals acting on the tangent space. In this case the basis consists of only one element $\{\ederiv t\}$ and the exterior 1-forms are represented as:
	\begin{equation}
		\alpha^{(1)} = \alpha\,\mathrm{d}t\;, \label{eq:1_form_1d}
	\end{equation}
	where $\alpha$ is the $t$-component of the exterior 1-form. In this coordinate system we have that:
	\[
		\ederiv t \left(\frac{\ederiv}{\ederiv t}\right) = 1\,.
	\]
	And therefore:
	\[
		\alpha^{(1)}\left(\vec{v}\right) = \left(\alpha\ederiv t\right)\left(v \frac{\ederiv}{\ederiv t}\right) = \alpha v\,.
	\]
	\end{framed}

The \emph{exterior product} or \emph{wedge product}, $\wedge$, allows the construction of higher rank $k$-forms, $1<k\leq n$, from 1-forms. Let $\manifold{N}$ be a manifold of dimension $n$, $\Lambda^{k}\left(\manifold{N}\right)$ and $\Lambda^{l}\left(\manifold{N}\right)$ be the space of $k$-forms and $l$-forms, respectively, with $k+l\leq n$, then the wedge product, $\wedge$, is a mapping:

\[
	\wedge : \Lambda^{k}\left(\manifold{N}\right)\times\Lambda^{l}\left(\manifold{N}\right)\rightarrow \Lambda^{k+l}\left(\manifold{N}\right), \quad k+l\leq n
\]
which satisfies the following properties:

\begin{subequations}
	\begin{align}
		&\left(\alpha^{\left(k\right)}+\beta^{\left(l\right)}\right)\wedge\gamma^{\left(m\right)} = \alpha^{\left(k\right)}\wedge\gamma^{\left(m\right)} + \beta^{\left(l\right)}\wedge\gamma^{\left(m\right)} &\quad \text{(Distributivity)}\\
		&\left(\alpha^{\left(k\right)}\wedge\beta^{\left(l\right)}\right)\wedge\gamma^{\left(m\right)} = \alpha^{\left(k\right)}\wedge\left(\beta^{\left(l\right)}\wedge\gamma^{\left(m\right)}\right) = \alpha^{\left(k\right)}\wedge\beta^{\left(l\right)}\wedge\gamma^{\left(m\right)} & \quad \text{(Associativity)} \\
		& a\alpha^{(k)}\wedge\beta^{(l)} = \alpha^{(k)}\wedge a\beta^{(l)} = a\left(\alpha^{(k)}\wedge\beta^{(l)}\right) & \quad \text{(Multiplication by functions)} \\
		& \alpha^{(k)}\wedge\beta^{(l)} = \left(-1\right)^{kl}\beta^{(l)}\wedge\alpha^{(k)} & \quad \text{(Skew symmetry)} 
	\end{align}
\end{subequations}
where $\alpha^{(k)}\in\Lambda^{(k)}\left(\manifold{N}\right)$, $\beta^{(l)}\in\Lambda^{(l)}\left(\manifold{N}\right)$ and $\gamma^{(m)}\in\Lambda^{(m)}\left(\manifold{N}\right)$. A differential $k$-form, $\alpha^{(k)}\in\Lambda^{k}$, is then an alternating $k$-tensor:
\[
	\alpha^{(k)}:\underbrace{T_{P}\manifold{N}\times\dots\times T_{P}\manifold{N}}_{k \text{ times }}\rightarrow\mathbb{R}\,,
\]
and we can write:
\begin{equation}
	\alpha^{(k)} = \sum_{i_{1},\dots, i_{k}}\alpha_{i_{1},\dots, i_{k}}(x_{1},\dots,x_{n})\,\ederiv x^{i_{1}}\wedge \dots \wedge \ederiv x^{i_{k}}\,, \quad 1\leq i_{1} < \dots < i_{k}\leq n\,. \label{eq:expression_diff_form}
\end{equation}
The space of differential 0-forms, $\Lambda^{(0)}(\manifold{N})$, is simply the space of smooth functions and we write:
\begin{equation}
	\beta^{(0)}(x_{1},\dots,x_{n}) = \beta(x_{1},\dots,x_{n})\,. \label{eq:0_form}
\end{equation}

\begin{framed}
\noindent For the particular case $n=1$ the wedge product takes the simpler form:
\[
	\alpha^{(1)} \wedge \beta^{(0)} = \beta^{(0)} \wedge \alpha^{(1)} = \alpha \beta\,\ederiv t\,, \quad \beta^{(0)}\wedge\gamma^{(0)} = \gamma^{(0)}\wedge\beta^{(0)} = (\gamma\beta)^{(0)} \quad\text{and}\quad \alpha^{(1)} \wedge \sigma^{(1)} = \sigma^{(1)} \wedge \alpha^{(1)} = 0\,,
\]
where $\beta^{(0)},\gamma^{(0)}\in\Lambda^{0}(\manifold{T})$ and $\alpha^{(1)},\sigma^{(1)}\in\Lambda^{1}(\manifold{T})$. 
\end{framed}

Differential $k$-forms naturally integrate over $k$-dimensional manifolds. Let $\alpha^{(k)}\in\Lambda^{k}\left(\manifold{N}\right)$ and $\manifold{N}_{k}\subset\manifold{N}\subset\mathbb{R}^{n}$, with $k=\mathrm{dim}\left(\manifold{N}_{k}\right)$, $n=\mathrm{dim}\left(\manifold{N}\right)$ and $k\leq n$,

\begin{equation}
	\langle\alpha^{(k)},\manifold{N}_{k}\rangle:=\int_{\manifold{N}_{k}}\alpha^{(k)},\label{eq:duality_pairing}
\end{equation}
this represents a metric-free duality pairing. 

\begin{framed}
\noindent For the case $n=1$ considered here we have:
\[
	\langle\beta^{(0)},\manifold{T}_{0}\rangle:=\int_{\manifold{T}_{0}}\beta^{(0)}\quad\text{and}\quad\langle\alpha^{(1)},\manifold{T}_{1}\rangle:=\int_{\manifold{T}_{1}}\alpha^{(1)}\,.\label{eq:duality_pairing_1d}
\]
Note that $\manifold{T}_{0}$ is a 0-dimensional manifold, therefore a point. By definition, the integral of a scalar function over a point is just the evaluation of the function at the point: $\int_{\manifold{T}_{0}}\beta^{(0)} = \beta(\manifold{T}_{0})$.
\end{framed}

%
%
%

For Riemannian manifolds, $\manifold{N}$, we can define a point-wise inner product of $k$-forms, $\left(\cdot,\cdot\right)$:
\[
	\left(\cdot,\cdot\right):\Lambda^{k}\left(\manifold{N}\right)\times\Lambda^{k}\left(\manifold{N}\right)\rightarrow\Lambda^{0}\left(\manifold{N}\right)\,.
\]
If $\alpha^{(k)}$ and $\beta^{(k)}$ are two $k$-forms expressed in the form \eqref{eq:expression_diff_form}, then the point-wise inner product is given by:
\[
	\left(\alpha^{(k)},\beta^{(k)}\right) = g^{i_{1}j_{1}}\dots g^{i_{k}j_{k}}\alpha_{i_{1},\dots,i_{k}}\beta_{j_{1},\dots,j_{k}}\,.
\]
Where $g^{ij}:=(\ederiv x^{i},\ederiv x^{j})$ are the components of the inverse of the metric tensor, \eqref{eq:metric_coefficients}.

\begin{framed}
\noindent For 1-dimensional manifolds, $\manifold{T}$, the point-wise inner products between 0-forms, $\alpha^{(0)},\beta^{(0)}\in\Lambda^{0}(\manifold{T})$, written in the form \eqref{eq:0_form} is given by
	\begin{equation}
		\left(\alpha^{(0)},\beta^{(0)}\right) = \alpha \, \beta \;,\label{eq::definition_point_wise_inner_product_zero_forms}
	\end{equation}
	and the inner product between 1-forms, $\alpha^{(1)},\beta^{(1)}\in\Lambda^{1}(\manifold{T})$, written in the form \eqref{eq:1_form_1d} is given by
	\begin{equation}
		\left(\alpha^{(1)},\beta^{(1)}\right) = \alpha \, \beta \, \frac{1}{g}\;. \label{eq::definition_point_wise_inner_product_one_forms}
	\end{equation}
	Note that we used $g:=g_{11}$, with $g_{11}$ as in \eqref{eq:metric_coefficient_1d}. Therefore $g^{11} = \frac{1}{g_{11}}=\frac{1}{g}$.	
\end{framed}

The Hodge-$\star$ operator is a mapping:
\[
	\star:\Lambda^{k}(\manifold{N})\rightarrow\Lambda^{n-1}(\manifold{N})\,,
\]
 that is induced by a combination of the wedge product and the inner product,
 \begin{equation}
 	\alpha^{(k)}\wedge\star\beta^{(k)} := \left(\alpha^{(k)},\beta^{(k)}\right)\sigma^{(n)}\,. \label{eq:hodge_star}
 \end{equation}
Where $\sigma^{(n)}$ is the $n$-form, called \emph{volume form}, such that $\int_{\manifold{N}}\sigma^{(n)}$ is the volume of the manifold $\manifold{N}$ and $\star 1 = \sigma^{(n)}$.

The $L^{2}$ inner product, $\left(\cdot,\cdot\right)_{L^{2}(\manifold{N})}$, is defined as a mapping,
\[
	\left(\cdot,\cdot\right)_{L^{2}(\manifold{N})}: \Lambda^{k}(\manifold{N})\times\Lambda^{k}(\manifold{N}) \rightarrow\mathbb{R}\,
\]
such that:
\begin{equation}
	\left(\alpha^{(k)},\beta^{(k)}\right)_{L^{2}(\manifold{N})} := \int_{\manifold{N}}(\alpha^{(k)},\beta^{(k)})\,\sigma^{(n)} = \int_{\manifold{N}}\alpha^{(k)}\wedge\star\beta^{(k)}\,. \label{eq:L2_inner_product}
\end{equation}

\begin{framed}
	\noindent The Hodge-$\star$ operator, for the case of one dimensional Riemannian manifolds, $\manifold{T}$, takes the form:
	\begin{equation}
		\star 1 = \sqrt{g}\,\ederiv t \quad\text{and}\quad \star\ederiv t = \frac{1}{\sqrt{g}}\,. \label{eq:hodge_star_1d}
	\end{equation}
	The $L^{2}$ inner product between 0-forms, $\alpha^{(0)},\beta^{(0)}\in\Lambda^{0}(\manifold{T})$, written in the form \eqref{eq:0_form} becomes
	\begin{equation}
		\left(\alpha^{(0)},\beta^{(0)}\right)_{L^{2}(\manifold{N})} = \int_{\manifold{T}}\alpha\beta\sqrt{g}\,\ederiv t\,, \label{eq:L2_1d_0form}
	\end{equation}
	and the  $L^{2}$ inner product between 1-forms, $\alpha^{(1)},\beta^{(1)}\in\Lambda^{1}(\manifold{T})$, written in the form \eqref{eq:1_form_1d} is
	\begin{equation}
		\left(\alpha^{(1)},\beta^{(1)}\right)_{L^{2}(\manifold{N})} = \int_{\manifold{T}} \frac{\alpha\beta}{\sqrt{g}}\ederiv t\,. \label{eq:L2_1d_1form}
	\end{equation}
\end{framed}

Differentiation in differential geometry is represented by the \emph{exterior derivative}, $\ederiv$, which is defined as a mapping,
\[
	\ederiv : \Lambda^{k}(\manifold{N})\rightarrow\Lambda^{k+1}(\manifold{N})\,,
\]
that satisfies the generalized Stokes Theorem: 
\begin{equation}
	\int_{\manifold{N}}\ederiv\alpha^{(k)} = \int_{\partial\manifold{N}}\alpha^{(k)}\;. \label{eq:stokes_theorem}
\end{equation}
Where $\manifold{N}$ is a $(k+1)$-dimensional manifold and $\partial\manifold{N}$ is its $k$-dimensional boundary manifold, and $\alpha^{(k)}\in\Lambda^{k}(\partial\manifold{N})$. This equation contains, as special cases, the Newton-Leibniz, Stokes and Gauss theorems and hence generalizes the vector calculus $\nabla$, $\nabla\times$ and $\nabla\cdot$ operators to arbitrary manifolds. Moreover, this operator is completely topological (does not depend on the metric) and satisfies the Leibniz rule $\ederiv(\alpha^{(k)}\wedge\beta^{(l)}) = \ederiv\alpha^{(k)}\wedge\beta^{(l)}+(-1)^{k}\alpha^{(k)}\wedge\ederiv\beta^{(l)}$. An important point to note is that this theorem can be expressed using the duality pairing, \eqref{eq:duality_pairing}, as:
\begin{equation}
		\langle\mathrm{d}\alpha^{(k)},\manifold{N}\rangle = \langle\alpha^{(k)},\partial\manifold{N}\rangle\;. \label{eq:stokes_theorem_duality_pairing}
\end{equation}
This duality property will play a fundamental role in the construction of the \emph{discrete exterior derivative}.

\begin{framed}
\noindent For $1$-dimensional manifolds, the \emph{exterior derivative}, $\mathrm{d}$, is a mapping $\mathrm{d}:\Lambda^{0}\mapsto\Lambda^{1}$, such that on a coordinate system it is defined as:
\begin{equation}
	\mathrm{d} \alpha^{(0)} := \frac{\mathrm{d}\alpha}{\mathrm{d}t}\,\mathrm{d}t,\quad \forall \alpha^{(0)}\in\Lambda^{(0)}(\manifold{T})\;.\label{eq::def_exterior_derivative}
\end{equation}
In this case, since $n=1$, we have that $\mathrm{d}\alpha^{(1)} = 0$, $\forall\alpha^{(1)}\in\Lambda^{1}(\manifold{T})$. Higher order differentiation can be defined by combining the exterior derivative and the Hodge-$\star$ operators, for example:
\[
	\ederiv\star\ederiv \alpha^{(0)} = \ederiv\star \frac{\ederiv \alpha}{\ederiv t}\,\ederiv t = \ederiv \frac{1}{\sqrt{g}}\frac{\ederiv \alpha}{\ederiv t} =  \frac{\ederiv}{\ederiv t} \frac{1}{\sqrt{g}}\frac{\ederiv \alpha}{\ederiv t} \,\ederiv t\,.
\]
	
In this way one can construct a double de Rham complex associated to differential forms in 1-dimensional Riemannian manifolds, \eqref{eq::double_de_rham_complex}:
	\begin{equation}
		\includegraphics{./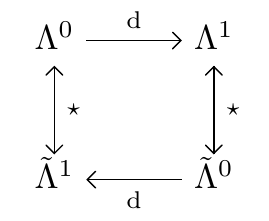} \label{eq::double_de_rham_complex}
	\end{equation}

\end{framed}
	
	There are two fundamental results to extract from the differential geometric formulation. First, from the generalized Stokes' theorem in one dimension, it is possible to recognize that 1-forms are intrinsically related to integrals over time intervals whereas 0-forms are associated  to evaluations at time instants. Moreover exact differentiation is possible if the points where 0-forms are evaluated are the boundary of the line segments where 1-forms are integrated. Second, from the double de Rham complex, \eqref{eq::double_de_rham_complex}, one can see that there exist two distinct complexes, the top one and the bottom one, both being related to each other by the Hodge-$\star$ operator. It states, for example, that $\alpha^{(1)}$ is associated to time intervals and that $\star\alpha^{(1)}$ is associated to time instants. Conventionally, these two quantities are considered the same quantity and are discretized in the same way. It is the authors' opinion that this formal distinction is essential in the construction of a numerical discretization since, at the discrete level, the Hodge-$\star$ cannot be performed exactly. Therefore one needs to make use of dual grids to represent this dual complex. In this way all approximations lie in the discretisation of the Hodge-$\star$, see \cite{Kreeft2011,Palha2014,desbrun2005discrete,perot43discrete,tonti1975formal,mattiussi2000finite}.

\section{A brief introduction to algebraic topology} \label{section:brief_introduction_algebraic_topology}
	Following the mimetic framework presented in \cite{Kreeft2011,Palha2014}, consider a temporal 1-dimensional domain, $\manifold{T}$, and its \emph{primal}, $D$, and \emph{dual}, $\tilde{D}$, grids, see \figref{fig:primal_dual_grid}.

\begin{figure}[!ht]
	\centering
	\includegraphics{./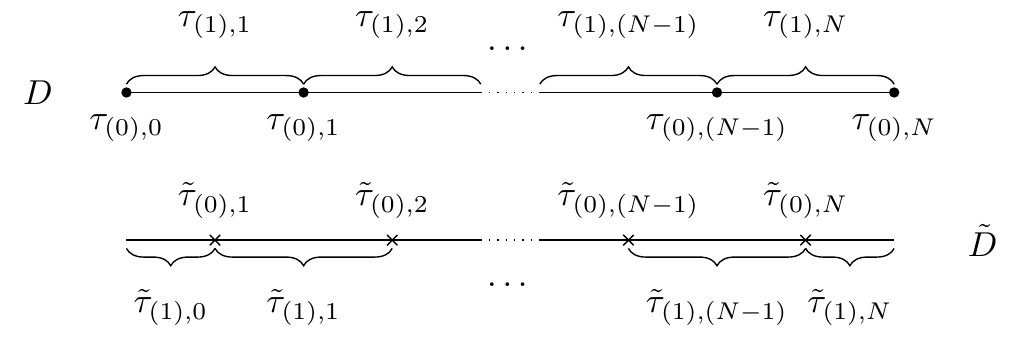}
	\caption{Example of a primal, $D$, and dual, $\tilde{D}$, grids covering a 1-dimensional manifold. Notice that the number of nodes, $\tau_{(0),i}$, in the primal mesh is equal to the number of edges (line segments), $\tilde{\tau}_{(1),i}$, in the dual mesh, and vice-versa.}
	\label{fig:primal_dual_grid}
\end{figure}

	These grids consist not only of the points,
	\[
		\left\{\tau_{(0),i}: i=0,\dots,N\right\} \qquad \text{and} \qquad \left\{\tilde{\tau}_{(0),i}: i=1,\dots,N\right\},
	\]
	as is common in many numerical discretizations, but also of the edges (line segments) connecting them,
	\[
		\left\{\tau_{(1),i}: i=1,\dots,N\right\} \qquad \text{and} \qquad \left\{\tilde{\tau}_{(1),i}: i=0,\dots,N\right\}.
	\]
	One important point to note is that the number of nodes, $\tau_{(0),i}$, in the primal mesh is identical to the number of edges, $\tilde{\tau}_{(1),i}$, in the dual mesh, and vice-versa.
	
	The $k$-dimensional objects in $D$ and $\tilde{D}$ are called \emph{$k$-cells} and we represent them by $\tau_{(k),i}$ where $k$ denotes the dimension of the object (in our case $k=0,1$) and $i$ is a label that distinguishes different objects. 
	
	It is possible to \emph{inner orient} a time interval (a line) by defining a direction that goes from the preceding time instant to the following one (as in the natural time sequence), see top left of the \emph{Inner orientation} block in \figref{fig:orientation}. One can also inner orient a time interval in the opposite direction (as in time reversal), see top right of the \emph{Inner orientation} block in \figref{fig:orientation}. Time instants can be inner oriented as either sources (see bottom left of the \emph{Inner orientation} block in \figref{fig:orientation}) or sinks (see bootom right of the \emph{Inner orientation} block in \figref{fig:orientation}). The \emph{outer orientation} is induced by inner orientation, see the \emph{Outer orientation} block in \figref{fig:orientation}. For a more detailed discussion of orientation see \cite{tonti1975formal,bossavit:japanese_01,Palha2014} and especially the more recent work by Tonti \cite{Tonti2014} where the particular case of time is addressed.
	
	If we endow each of the geometric objects with a default orientation then we call the grid an \emph{oriented grid}, see \figref{fig:orientation} for examples of how the geometric objects can be oriented.

	\begin{figure}[!ht]
	\centering
	\includegraphics{./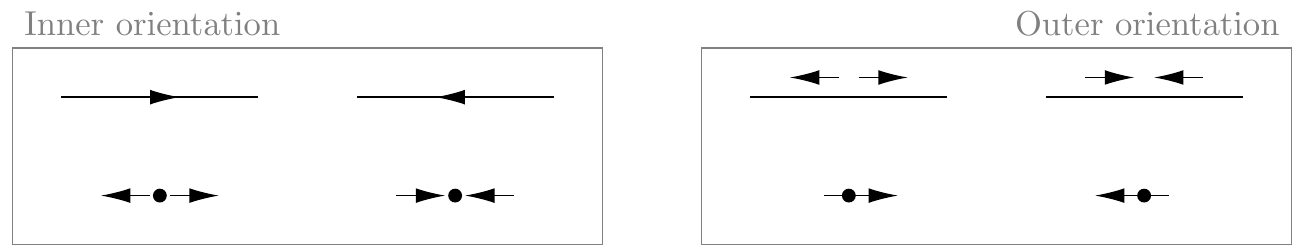}
	\caption{Example of possible orientations of points and edges. See \cite{tonti1975formal,bossavit:japanese_01,Palha2014} for a more detailed discussion of orientation.}
	\label{fig:orientation}
	\end{figure}

	Given a grid, e.g. $D$, the space of $k$-chains, $C_{k}(D)$, is the collection of weighted $k$-cells. A $k$-chain, $\boldsymbol{c}_{(k)}\in C_{k}(D)$, is a formal sum of $k$-cells, $\tau_{(k),i} \in D$,

	\[
		\boldsymbol{c}_{(k)} = \sum_{i}\tau_{(k),i}c^{i}.
	\]
	By formal sum we mean a collection of cells and weights, $\{\tau_{(k),i},c^{i}\}$, where the $c^{i}$ denote the weights.

	The \emph{boundary operator}, $\partial$, on $k$-chains, with $k\geq 1$, is a homomorphism, $\partial : C_{k}(D)\rightarrow C_{k-1}(D)$, such that:

	\[
		\partial \boldsymbol{c}_{(k)} = \partial\sum_{i}\tau_{(k),i}c^{i} := \sum_{i}\partial\tau_{(k),i}\,c^{i}.
	\]
	For 0-chains, $\partial \boldsymbol{c}_{(0)} = \boldsymbol{0}$.
	The boundary of a $k$-cell, $\tau_{(k),i}$, is a $(k-1)$-chain formed by its oriented faces. The coefficients of this $(k-1)$-chain are associated to each of the faces and are given by the orientations:
	\[
		\partial\tau_{(k),i} = \sum_{j}\tau_{(k-1),j}\,e^{j}_{i}\,,
	\]
	with:
	\[
		\begin{cases}
			e^{j}_{i} = 1, & \text{if the orientation of } \tau_{(k-1),j} \text{ equals the default orientation}\\
			e^{j}_{i} = -1, & \text{if the orientation of } \tau_{(k-1),j} \text{ is opposite to the default orientation}\\
			e^{j}_{i} = 0, & \text{if } \tau_{(k-1),j} \text{ is not a face of } \tau_{(k),i},
		\end{cases}
	\]
	and $\partial\partial \boldsymbol{c}_{(k)} = 0$.

	It is possible to establish an isomorphism between the $k$-chains and the corresponding column vectors of its coefficients and we use $c_{(k)}$ to represent the column vector of the coefficients of the $k$-chain $\boldsymbol{c}_{(k)}$, see \cite{Palha2014} for more details.

	\begin{example}\label{ex:chain_boundary}
	As an example of $k$-chains and the boundary operator, consider the grid represented in \figref{fig:chain_boundary_example} where the default orientation of the geometric objects is depicted. 
	\begin{figure}[!ht]
	\centering
	\includegraphics{./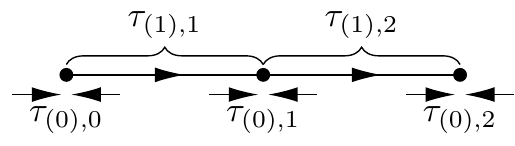}
	\caption{Example of a 1-dimensional grid, $D$, with the representation of the default orientation of the geometric objects.}
	\label{fig:chain_boundary_example}
	\end{figure}

	Then we can construct the following $1$-chains:
	\[
		\begin{array}{ccccc}
		\boldsymbol{c}_{(1)} = \tau_{(1),1} + \tau_{(1),2} & \longleftrightarrow & c_{(1)} = \left[\begin{array}{c} 1 \\ 1\end{array}\right] & \longleftrightarrow & \includegraphics{./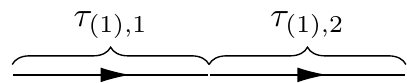}
		 \\
		\boldsymbol{a}_{(1)} = - \tau_{(1),2} & \longleftrightarrow & a_{(1)} = \left[\begin{array}{c} 0 \\ -1\end{array}\right] & \longleftrightarrow & \includegraphics{./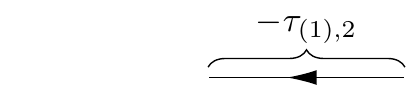}
		\end{array}\,,
	\]
	and the following $0$-chains:
	\[
		\begin{array}{ccccc}
		\boldsymbol{c}_{(0)} = \tau_{(0),0} + \tau_{(0),1} + \tau_{(0),2} & \longleftrightarrow & c_{(0)} = \left[\begin{array}{c} 1 \\ 1 \\ 1 \end{array}\right] & \longleftrightarrow & \includegraphics{./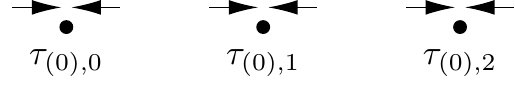}
		 \\
		\boldsymbol{a}_{(0)} = - \tau_{(0),1} & \longleftrightarrow & a_{(0)} = \left[\begin{array}{c} 0 \\ -1 \\ 0\end{array}\right] & \longleftrightarrow & \includegraphics{./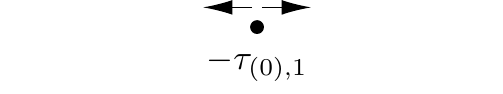}
		\end{array}\,,
	\]
	
	The boundary operator, $\partial$, applied to the 1-cochain, $\boldsymbol{c}_{(1)}$, results in:
	\[
		\begin{array}{ccc}
			{\setlength\arraycolsep{1pt}
			\begin{array}{rcl}
				\partial\boldsymbol{c}_{(1)} &=& \partial\tau_{(1),1} + \partial\tau_{(1),2} \\
				                             &=& -\tau_{(0),0} + \tau_{(0),1} - \tau_{(0),1} + \tau_{(0),2}\\
				                             &=& -\tau_{(0),0} + \tau_{(0),2}
			\end{array}}
			& \longleftrightarrow & 
				\partial c_{(1)} = \left[\begin{array}{cc} -1 & 0 \\ 1 & -1 \\ 0 & 1 \end{array}\right]
				\left[\begin{array}{c} 1 \\ 1\end{array}\right] =  \mathsf{E}_{(0,1)}\left[\begin{array}{c} 1 \\ 1\end{array}\right]
				=\left[\begin{array}{c} -1 \\ 0 \\ 1\end{array}\right]
		\end{array}\,,
	\]

	and to the 1-cochain $\boldsymbol{a}_{(1)}$ results in:
	\[
		\begin{array}{ccc}
		{\setlength\arraycolsep{1pt}
		\begin{array}{rcl}
			\partial\boldsymbol{a}_{(1)} &=& - \partial\tau_{(1),2} \\
			                             &=& -\left(-\tau_{(0),1} + \tau_{(0),2}\right) \\
			                             &=& \tau_{(0),1} - \tau_{(0),2}
		\end{array}}
		& \longleftrightarrow & \partial a_{(1)} = \left[\begin{array}{cc} -1 & 0 \\ 1 & -1 \\ 0 & 1 \end{array}\right]
				\left[\begin{array}{c} 0 \\ -1\end{array}\right] =  \mathsf{E}_{(0,1)}\left[\begin{array}{c} 0 \\ -1\end{array}\right] 
				=\left[\begin{array}{c} 0 \\ 1 \\ -1\end{array}\right]
		\end{array}\,.
	\]
	\end{example}

	\begin{remark}
		The matrices $\mathsf{E_{(0,1)}}$ present in \exampleref{ex:chain_boundary}, are an example of \emph{incidence matrices}, $\mathsf{E_{(k,k+1)}}$, which are matrix representations of the boundary operator.
	\end{remark}

	For a grid to be considered  a \emph{cell complex} we impose that if a $k$-dimensional geometric object, $\tau_{(k),i}$, is in the grid, its boundary, $\partial\tau_{(k),i}$, is also in the grid. We see that the primal grid, $D$ in \figref{fig:primal_dual_grid}, is a cell complex. The dual grid, $\tilde{D}$ in \figref{fig:primal_dual_grid}, is only a cell complex if boundary nodes are added. For a detailed discussion of this topic the reader is directed to \cite{Kreeft2011,Palha2014}.

	Dual to the space of $k$-chains, $C_{k}(D)$, is the space of \emph{$k$-cochains}, $C^{k}(D)$, defined as the set of homomorphisms, $\boldsymbol{c}^{(k)}:C_{k}\rightarrow \mathbb{R}$,

	\[
		\langle\boldsymbol{c}^{(k)},\boldsymbol{c}_{(k)}\rangle := \boldsymbol{c}^{(k)}\left(\boldsymbol{c}_{(k)}\right).
	\]

	With the duality pairing between cochains and chains, it is possible to define the formal adjoint of the boundary operator, the \emph{coboundary operator}, $\delta : C^{k}(D)\rightarrow C^{k+1}(D)$,
	\begin{equation}
		\langle\delta\boldsymbol{c}^{(k)},\boldsymbol{c}_{(k+1)}\rangle := \langle\boldsymbol{c}^{(k)},\partial\boldsymbol{c}_{(k+1)}\rangle .\label{eq:definition_coboundary_operator}
	\end{equation}
	This equation is the discrete analogue of the generalized Stokes' equation, \eqref{eq:stokes_theorem_duality_pairing}.

	Analogous to the exterior derivative, the coboundary operator is nilpotent: $\delta\delta\boldsymbol{c}^{(k)} = 0$ for all $\boldsymbol{c}^{(k)}\in C^{k}(D)$. This property follows directly from its definition, \eqref{eq:definition_coboundary_operator}, and from the nilpotency of the boundary operator $\partial\partial\boldsymbol{c}_{(k)} := 0$ for all $\boldsymbol{c}_{(k)}\in C_{k}(D)$.

	Let $C_{k}(D)$ be the space of $k$-chains with basis $\{\tau_{(k),j}\}$, then the basis of the dual space of $k$-cochains, $C_{k}(D)$, is given by $\{\tau^{(k),i}\}$, such that $\tau^{(k),i}\left(\tau_{(k),j}\right) = \delta^{i}_{j}$ (here $\delta^{i}_{j}$ are the coefficients of the Kronecker delta). In this way, all $k$-cochains can be represented as linear combinations of these basis elements,
	\[
		\boldsymbol{c}^{(k)} = \sum_{i}\tau^{(k),i} c_{i}.
	\]
	Similar to $k$-chains, there exists an isomorphism between $k$-cochains an the corresponding row vector of its coefficients and we use $c^{(k)}$ to represent the row vector of the coefficients of the $k$-cochain $\boldsymbol{c}^{(k)}$, see \cite{Palha2014} for more details. With this isomorphism that identifies $k$-chains and $k$-cochains with their coefficients we can establish a matrix representation for the coboundary operator, $\delta$:
	\begin{align}
		\langle\boldsymbol{c}^{(k)},\partial\boldsymbol{c}_{(k+1)}\rangle &= \sum_{i=1}^{\mathrm{rank}(C^{k}(D))} \sum_{j=1}^{\mathrm{rank}(C_{k+1}(D))} c_{i}\left(\mathsf{E}^{i,j}_{(k,k+1)}c^{j}\right) \label{eq:definition_incidence_matrix}\\
		&= \sum_{i=1}^{\mathrm{rank}(C^{k}(D))} \sum_{j=1}^{\mathrm{rank}(C_{k+1}(D))} \left(c_{i}\mathsf{E}^{i,j}_{(k,k+1)}\right)c^{j} \\
		&= \langle\delta\boldsymbol{c}^{(k)},\boldsymbol{c}_{(k+1)}\rangle .
	\end{align}
	Therefore, when the row vector $c^{(k)}$ contains the expansion coefficients, $c_{i}$, for the cochain $\boldsymbol{c}^{(k)}$, then the row vector $c^{(k)}\mathsf{E}_{(k,k+1)}$ contains the expansion coefficients for $\delta\boldsymbol{c}^{(k)}$.

\section{Bridging the continuous and the discrete} \label{section:bridging_continuous_discrete}
	In \secref{section:brief_introduction_differential_geometry} we have introduced the key concepts of differential geometry, integration over manifolds, differential operators and established fundamental integral relations. In \secref{section:brief_introduction_algebraic_topology} we have presented an analogous mathematical structure but at the discrete level, using algebraic topology. We now have two distinct worlds, one continuous and one discrete, with paralleled mathematical structures but still no connection between them. What we propose to do in this section is to explain how to convert continuous forms into discrete cochains and vice versa. In this way, we establish a bridge between the continuous and the discrete.

	\subsection{From continuous to discrete} \label{section:from_continuous_to_discrete}
		The \emph{reduction operator}, $\reduction:\Lambda^{k}\left(\manifold{N}\right)\rightarrow C^{k}(D)$, maps $k$-differential forms onto $k$-cochains by
		\begin{equation}
			\left.\reduction\alpha^{(k)}\right|_{\tau_{(k),i}} := \langle\reduction\alpha^{(k)},\tau_{(k),i}\rangle := \int_{\tau_{(k),i}}\alpha^{(k)} = \langle\alpha^{(k)},\tau_{(k),i}\rangle, \qquad \forall \tau_{(k),i}\in C_{k}(D). \label{eq:definition_reduction_operator}
		\end{equation}

		Therefore, for all $k$-chains, $\boldsymbol{c}_{(k)}\in C_{k}(D)$, the reduction of a $k$-form, $\alpha^{(k)}\in\Lambda^{k}(\manifold{N})$, to a $k$-cochain, $\boldsymbol{a}^{(k)}\in C^{k}(D)$, is
		\begin{equation}
			\boldsymbol{a}^{(k)}(\boldsymbol{c}_{(k)}) := \langle\reduction\alpha^{(k)},\boldsymbol{c}_{(k)}\rangle = \sum_{i}c^{i}\langle\reduction\alpha^{(k)},\tau_{(k),i}\rangle = \sum_{i}c^{i}\int_{\tau_{(k),i}}\alpha^{(k)} = \int_{\boldsymbol{c}_{(k)}}\alpha^{(k)}. \label{eq:reduction_applied_to_differential_form}
		\end{equation}

		By defining the reduction operator in this manner an important commuting property relating the exterior derivative and the coboundary operator is satisfied:
		\begin{equation}
			\reduction\ederiv = \delta\reduction . \label{eq:reduction_communting_derivative}
		\end{equation}

		Commuting relations are one of the key points of the mimetic framework, see \cite{Kreeft2011,Palha2014} for a more extensive discussion. In this case, this commuting relation states that taking the exterior derivative and then discretizing lead to the same result as discretizing first and then taking the discrete derivative.

	\subsection{From discrete to continuous} \label{section:from_discrete_to_continuous}
		The operator that establishes the relation in the opposite direction, mapping $k$-cochains into $k$-differential forms, is the \emph{reconstruction operator}, $\reconstruction: C^{k}(D)\rightarrow \Lambda_{h}^{k}(\manifold{N})$, where $\Lambda_{h}^{k}(\manifold{N})$ is the range of $\reconstruction$. This operator can be implemented in different ways, see \cite{Palha2014} for a more detailed discussion and section \secref{subsection:basis_forms} for its use in this work. Despite the different possibilities to define this operator, the following commuting property must be satisfied,
		\begin{equation}
			\ederiv\reconstruction = \reconstruction\delta .\label{eq:reconstruction_commuting_property}
		\end{equation}

		The reconstruction operator is essentially an interpolation operator which interpolates the discrete $k$-cochains to continuous $k$-forms. The commuting property \eqref{eq:reconstruction_commuting_property} states that reconstructing and taking the exterior derivative is identical to first taking the discrete derivative and then reconstructing. This condition poses a severe restriction on the admissible reconstruction forms. For triangular elements these reconstruction forms are known as Whitney forms, \cite{Whitney57,bossavit:japanese_01}. The polynomial reconstruction forms on quadrilaterals are known as edge forms, \cite{robidoux-polynomial,gerritsma::edge_basis}, and are the ones used in this work, see \secref{subsection:basis_forms}.

		Furthermore, the reconstruction operator, $\reconstruction$, must be the right inverse of the reduction operator, $\reduction$, so $\reduction\reconstruction = \text{Id}$ on $C^{k}(D)$ and it should approximate the left inverse of $\reduction$, so $\reconstruction\reduction = \text{Id} + \mathcal{O}(h^{p})$, where $p$ is denotes the order of the approximation, for smooth differential forms.

		The first property, $\reduction\reconstruction=\text{Id}$, is a \emph{consistency condition}. The second property, $\reconstruction\reduction=\text{Id} + \mathcal{O}(h^{p})$, is an \emph{approximability condition}.

	\subsection{Mimetic projection} \label{subsection:mimetic_projection}
		The \emph{mimetic projection operator} is obtained by combining the reduction and reconstruction operators:
		\begin{equation} 
			\projection := \reconstruction\circ\reduction ,\label{eq:projection_operator}
		\end{equation}
		\[
			\includegraphics{./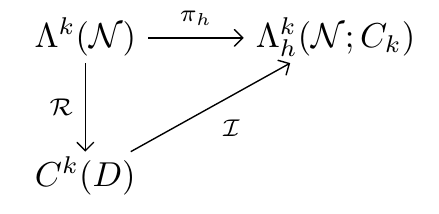}
		\]
		where $\Lambda_{h}^{k}(\manifold{N};C_{k})$ denotes the range of the reconstruction operator, $\reconstruction$.

		Due to the commuting properties of the reduction and reconstruction operators we have:
		\[
			\projection\ederiv = \reconstruction\reduction\ederiv = \reconstruction\delta\reduction = \ederiv\reconstruction\reduction = \ederiv\projection .
		\]

		Having defined the reduction, reconstruction and projection operators for the primal grid $D$, equivalent operators can be defined for the dual grid, $\tilde{D}$, \figref{fig:primal_dual_grid}. The space $\Lambda^{k}_{h}(\manifold{N};\tilde{C}_{k})$ is the space of discrete $k$-forms, with $k$-cochains associated with the dual $k$-cells,
		\[
			\Lambda_{h}^{k}(\manifold{N};\tilde{C}_{k}) = \tilde{\pi}_{h}\Lambda^{k}(\manifold{N}) := \tilde{\reconstruction}\tilde{\reduction}\, \Lambda^{k}(\manifold{N}).
		\]
		Where $\tilde{C}_{k}$ is the space of $k$-chains on the dual complex associated to the dual grid, $\tilde{D}$, $\tilde{\reduction}$ is the reduction of differential forms on the dual chains and $\tilde{\reconstruction}$ constitutes the reconstruction of differential forms from cochains defined on the dual complex.
		
		There are essentially two different ways in which the discrete Hodge-$\star$ operator can be implemented. Either we can use the definition of reconstruction of differential forms or we can use the definition of the inner product of differential forms. 
		
		For the first approach, we apply directly the definition of Hodge-$\star$ operator, \eqref{eq:hodge_star} or its one dimensional version \eqref{eq:hodge_star_1d}, to the reconstruction of a $k$-cochain and subsequently reduce the result on the topological dual grid, i.e. $\tilde{\reduction}\star\reconstruction$ or $\reduction\star\tilde{\reconstruction}$. Since here we use directly the reduction and reconstruction operators we refer to it as \emph{canonical Hodge}.
		
		For the second approach we use \eqref{eq:L2_inner_product}:
		\[
			\left(\alpha^{(k)}_{h},\beta^{(k)}_{h}\right)_{L^{2}\left(\manifold{N}\right)} := \int_{\manifold{N}}\alpha^{(k)}_{h}\wedge\star\beta^{(k)}_{h}\,,
		\]
		and note that in this definition the Hodge-$\star$ is applied to the second argument, therefore the inner product induces the Hodge-$\star$ operator. Since the underlying principle of this construction of the Hodge-$\star$ operator is an inner product we refer to it as \emph{Galerkin Hodge}.

		See \figref{fig:ex_hodge} for an example of the action of the two different Hodge-$\star$ operators on a 1-form.
		\begin{figure}[!ht]
			\center
			\includegraphics[width=0.8\textwidth]{./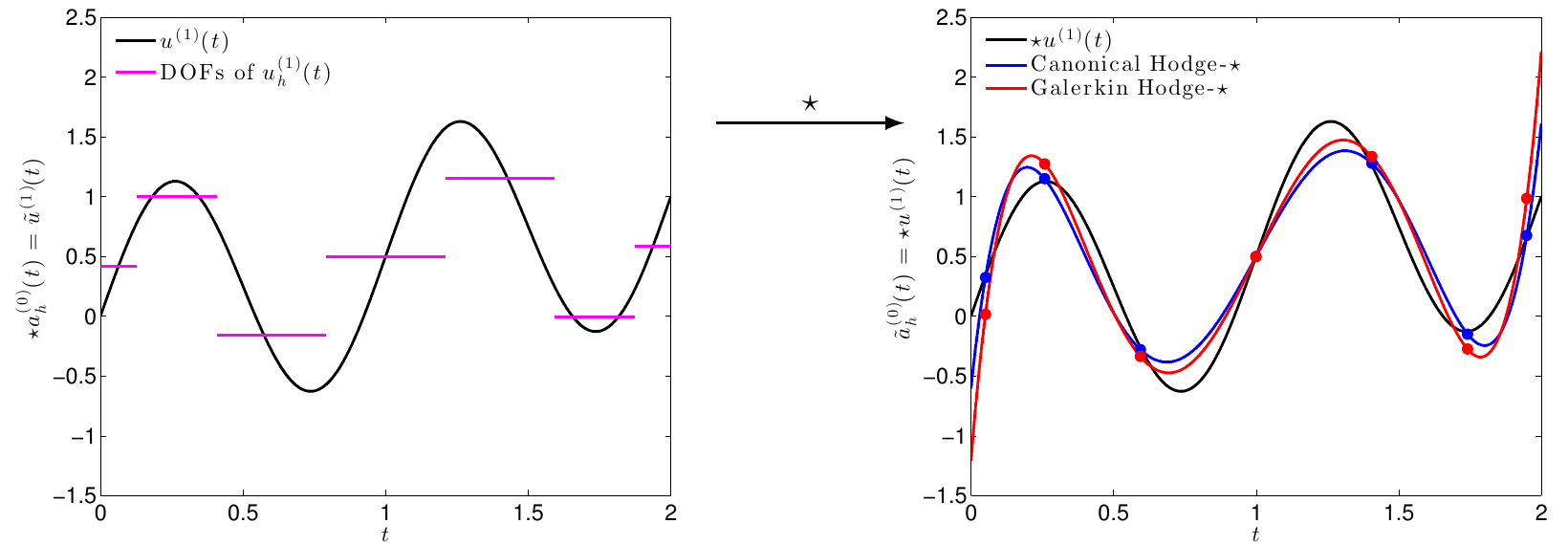}
			\caption{Example of the action of the two different discrete Hodge-$\star$ operators. The two operators result in different approximations (blue line and red line). If we refine the mesh or increase the polynomial order, the two operators will converge. Left: the analytical function and corresponding cochain (lines) associated to the degrees of freedom (dofs) of $\kdifformh{u}{1}\in\Lambda^1(\manifold{T};C_1)$. Right: the reconstruction (lines) and corresponding cochains (dots) associated to the degrees of freedom of $\kdifformh{a}{0}\in\Lambda^0(\manifold{T};\tilde{C}_0)$.}
			\label{fig:ex_hodge}
		\end{figure}
		

	\subsection{Basis forms} \label{subsection:basis_forms}
	Consider a one dimensional domain, $\Omega$, and an associated grid consisting of a collection of points $\tau_{\left(0 \right),i}$ and line segments connecting the points $\tau_{\left(1 \right),i}$.

	Let $\kformspace{k}$ be the space of smooth differentiable forms in $\Omega$. Additionally, let the finite dimensional space of differential forms be defined as $\kformspace{k}_{h} = \mathrm{span} \left( \left\{ \kdifform{\epsilon_{i}}{k} \right\} \right)$, $i = 1, \dots, \mathrm{dim} \left(  \kformspace{k}_{h} \right)$, where $\kdifform{\epsilon_{i}}{k} \in \kformspace{k}$ are basis $k$-forms. Under these conditions it is possible, see \cite{Palha2014}, to define a projection operator $\projection$ which projects elements of $\kformspace{k}$ onto elements of $\kformspace{k}_{h}$ which satisfies,
\begin{align}
 \projection \ederiv = \ederiv \projection.
\end{align}
Furthermore, it is possible to write,
\begin{align}
 \projection \kdifform{\alpha}{k} = \kdifformh{\alpha}{k} = \sum_{i} \alpha_{i} \kdifform{\epsilon_{i}}{k},
\end{align}
where
\begin{align}
 \alpha_{j} = \int_{\tau_{\left(k \right),j}} \kdifform{\alpha}{k} \quad \text{and } \quad \int_{\tau_{\left(k \right),j}} \kdifform{\epsilon_{i}}{k} = \delta^{i}_{j}, \quad k = 0,1. \label{eq:kronecker_basis_forms}
\end{align}

	\subsubsection{Primal grid}
Consider then a 0-form, $\kdifform{\alpha}{0} \in \kformspace{0}\left(Q_{ref} \right)$, where $Q_{ref} := \xi \in \left[ -1,1 \right]$. Define on $Q_{ref}$ a cell complex $D$ of order $p$ consisting of $\left(p+1 \right)$ nodes $\tau_{\left(0 \right),i} = \xi_{i}$, where $-1 \leq \xi_0 < \dots < \xi_{i}< \xi_{p} \leq 1$ are the Gauss-Lobatto-Legendre (GLL) quadrature nodes, and $p$ edges, $\tau_{\left(1 \right),i} = \left[\xi_{i-1},\xi_{i} \right]$, of which the nodes are the boundaries. The projection operator, $\projection$ reads,
\begin{align}
\projection \alpha^{(0)} = \sum_{i=0}^{p} \alpha_{i} \kdifform{\epsilon_{i}}{0}.
\end{align}
where $\kdifform{\epsilon_{i}}{0} = l_{i}\left(\xi \right)$ are the $p^{th}$ degree Lagrange polynomials and $\alpha_{i} = \kdifform{\alpha}{0}\left( \xi_{i}\right)$. Note that $\kdifform{\epsilon_{i}}{0}\left(\xi_{j} \right) = l_{i}\left(\xi_{j} \right) = \delta^{i}_{j}$, as in \eqref{eq:kronecker_basis_forms}. In \figref{fig:bassisfunctions}, top left, an example of basis 0-forms of polynomial order four is presented.

Similarly, for the projection of 1-forms in one dimension, \cite{gerritsma::edge_basis,robidoux-polynomial} derived 1-form \textit{edge polynomials}, $\kdifform{\epsilon_{i}}{1} \in \kformspacedomainh{1}{Q_{ref}}$,
\begin{align}
\kdifform{\epsilon_{i}}{1} = e_{i} \left( \xi \right) \ederiv \xi, \quad \text{with} \quad e_{i} (\xi)= - \sum_{k=0}^{i-1} \frac{d l_{k}}{\ederiv \xi}, \qquad i=1,\dots,p,
\end{align}
note that we again have,
\begin{align}
 \int_{\xi_{j-1}}^{\xi_{j}} \kdifform{\epsilon_{i}}{1} =  \int_{\xi_{j-1}}^{\xi_{j}} e_{i} \left( \xi \right) \ederiv\xi = \delta^{i}_{j}.
\end{align}
In \figref{fig:bassisfunctions}, top right, an example of the 1-form basis polynomial of order four is presented.

	\subsubsection{Dual grid}
		For the dual grid we again consider a 0-form defined on the reference interval, $\alpha^{(0)}\in\tilde{\Lambda}^{0}(Q_{ref})$. In this case, on the reference interval, $Q_{ref}$, we define a cell complex $\tilde{D}$ consisting of $(p+2)$ nodes $\tilde{\tau}_{\left(0 \right),i} = \tilde{\xi}_{i}$, where $-1 \leq \tilde{\xi}_{0} < \dots < \tilde{\xi}_{i}< \tilde{\xi}_{p+1} \leq 1$, and $(p+1)$ edges, $\tilde{\tau}_{\left(1 \right),i} = \left[\tilde{\xi}_{i-1},\tilde{\xi}_{i} \right]$, of which the nodes are the boundaries.  The nodes used are the $p$ nodes associated to Gauss-Legendre (GL) quadrature together with the two end points $\{-1,1\}$. The projection operator, $\tilde{\pi}_{h}$ reads,
\begin{align}
\tilde{\pi}_{h} \alpha^{(0)}  = \sum_{i=0}^{p} \alpha_{i} \kdifform{\tilde{\epsilon}_{i}}{0}.
\end{align}
where $\kdifform{\tilde{\epsilon}_{i}}{0} = \tilde{l}_{i}\left(\xi \right)$ are the Lagrange polynomials of degree $(p+1)$ and $\alpha_{i} = \kdifform{\alpha}{0}\left( \tilde{\xi}_{i}\right)$. Note that $\kdifform{\tilde{\epsilon}_{i}}{0}\left(\tilde{\xi}_{j} \right) = \tilde{l}_{i}\left(\tilde{\xi}_{j} \right) = \delta^{i}_{j}$, as in \eqref{eq:kronecker_basis_forms}. In \figref{fig:bassisfunctions}, bottom right, an example of basis 0-forms of polynomial order four is presented.

Similarly, the dual 1-form basis polynomials, $\kdifform{\tilde{\epsilon}_{i}}{1} \in \tilde{\Lambda}^{1}(Q_{ref})$, are obtained from
\begin{align}
\kdifform{\tilde{\epsilon}_{i}}{1} = \tilde{e}_{i} \left( \xi \right) \ederiv \xi, \quad \text{with} \quad \tilde{e}_{i} = - \sum_{k=0}^{i-1} \frac{d \tilde{l}_{k}}{\ederiv \xi}, \qquad i=1,\dots,p+1,
\end{align}
note that we again have,
\begin{align}
 \int_{\tilde{\xi}_{j-1}}^{\tilde{\xi}_{j}} \kdifform{\tilde{\epsilon}_{i}}{1} =  \int_{\tilde{\xi}_{j-1}}^{\tilde{\xi}_{j}} \tilde{e}_{i} \left( \xi \right) \ederiv\xi = \delta^{i}_{j}.
\end{align}
In \figref{fig:bassisfunctions}, bottom left, an example of the 1-form basis of polynomial order four is presented.

	\subsubsection{Exterior derivative}
The \textit{exterior derivative} of the primal basis 0-forms is given by,
\begin{align}
 \ederiv \kdifform{\epsilon_{i}}{0} = \frac{d l_{i}}{\ederiv \xi} \, \ederiv\xi = \sum_{k=0}^{i} \frac{d l_{k}}{\ederiv \xi} - \sum_{k=0}^{i-1} \frac{d l_{k}}{\ederiv \xi} = -\kdifform{\epsilon_{i+1}}{1} + \kdifform{\epsilon_{i}}{1}, \quad i = 0, \dots, p.
\end{align}
The derivation of the exterior derivative of the dual basis 0-forms is obtained following the same procedure by substituting the primal functions by the respective dual functions.

\begin{example}
	Consider a 0-form, $\alpha^{(0)}$, and the 1-form, $\beta^{(1)} = \ederiv\alpha^{(0)}$, both defined in $\Omega = [-1,1]$ and take a grid $D$ as the one defined in \figref{fig:chain_boundary_example_3}. If the discrete differential forms are given by:
	\begin{figure}[!ht]
	\centering
	\includegraphics{./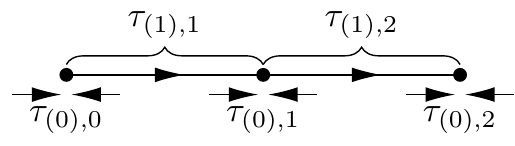}
	\caption{Representation of the 1-dimensional grid, $D$, with the default orientation of the geometric objects.}
	\label{fig:chain_boundary_example_3}
	\end{figure}
	
	\[
		\projection\alpha^{(0)} = \alpha_{0}\epsilon^{(0)}_{0} + \alpha_{1}\epsilon^{(0)}_{1} + \alpha_{2}\epsilon^{(0)}_{2} \qquad \text{and} \qquad \projection\beta^{(1)} = \beta_{1}\epsilon^{(1)}_{1} + \beta_{1}\epsilon^{(1)}_{2},
	\]
	then
	\begin{align*}
		\ederiv\projection\alpha^{(0)} &= \ederiv\left(\alpha_{0}\epsilon^{(0)}_{0} + \alpha_{1}\epsilon^{(0)}_{1} + \alpha_{2}\epsilon^{(0)}_{2}\right), \\
		&= \alpha_{0}\ederiv\epsilon^{(0)}_{0} + \alpha_{1}\ederiv\epsilon^{(0)}_{1} + \alpha_{2}\ederiv\epsilon^{(0)}_{2} \\
		&= \alpha_{0}\left(-\epsilon^{(1)}_{1}\right) + \alpha_{1}\left(\epsilon^{(1)}_{1} - \epsilon^{(1)}_{2}\right) + \alpha_{2}\left(\epsilon^{(1)}_{2}\right) \\
		& = \left(\alpha_{1}-\alpha_{0}\right)\epsilon^{(1)}_{1} + \left(\alpha_{2}-\alpha_{1}\right)\epsilon^{(1)}_{2}\\
		&= \beta_{1}\epsilon^{(1)}_{1} + \beta_{1}\epsilon^{(1)}_{2} = \beta^{(1)}.
	\end{align*}
	This means then that $\beta_{1} = \left(\alpha_{1}-\alpha_{0}\right)$ and $\beta_{2} = \left(\alpha_{2}-\alpha_{1}\right)$. This corresponds exactly to the coefficients obtained from the projection of $\beta^{(1)}$, since, recall \eqref{eq:definition_reduction_operator}:
	\[
		\left.\reduction\beta^{(1)}\right|_{\tau_{(1),i}} := \int_{\tau_{(1),i}}\beta^{(1)} = \int_{\tau_{(1),i}}\ederiv\alpha^{(0)} = \alpha(\tau_{(0),i}) - \alpha(\tau_{(0),i-1}),
	\]
	by Stokes' theorem, \eqref{eq:stokes_theorem}.
\end{example}

The exterior derivative of a discrete 0-form can then be written as,
\begin{align}
 \ederiv \kdifformh{\alpha}{0} = \ederiv \sum_{i=0}^{p} \alpha_{i} \kdifform{\epsilon_{i}}{0} = \sum_{i=1}^{p} \sum_{j=0}^{p} \mathsf{E}_{(0,1)}^{j,i} \alpha_{j} \kdifform{\epsilon_{i}}{1}, \label{eq:exterior_derivative_incidence_matrix}
\end{align}
where, $\mathsf{E}_{(0,1)}^{j,i}$ are the coefficients of the matrix representation of the coboundary operator, \eqref{eq:definition_incidence_matrix}, see \cite{Kreeft2011,Palha2014} for more details.

		\begin{figure}[!ht]
			\center
			\includegraphics{./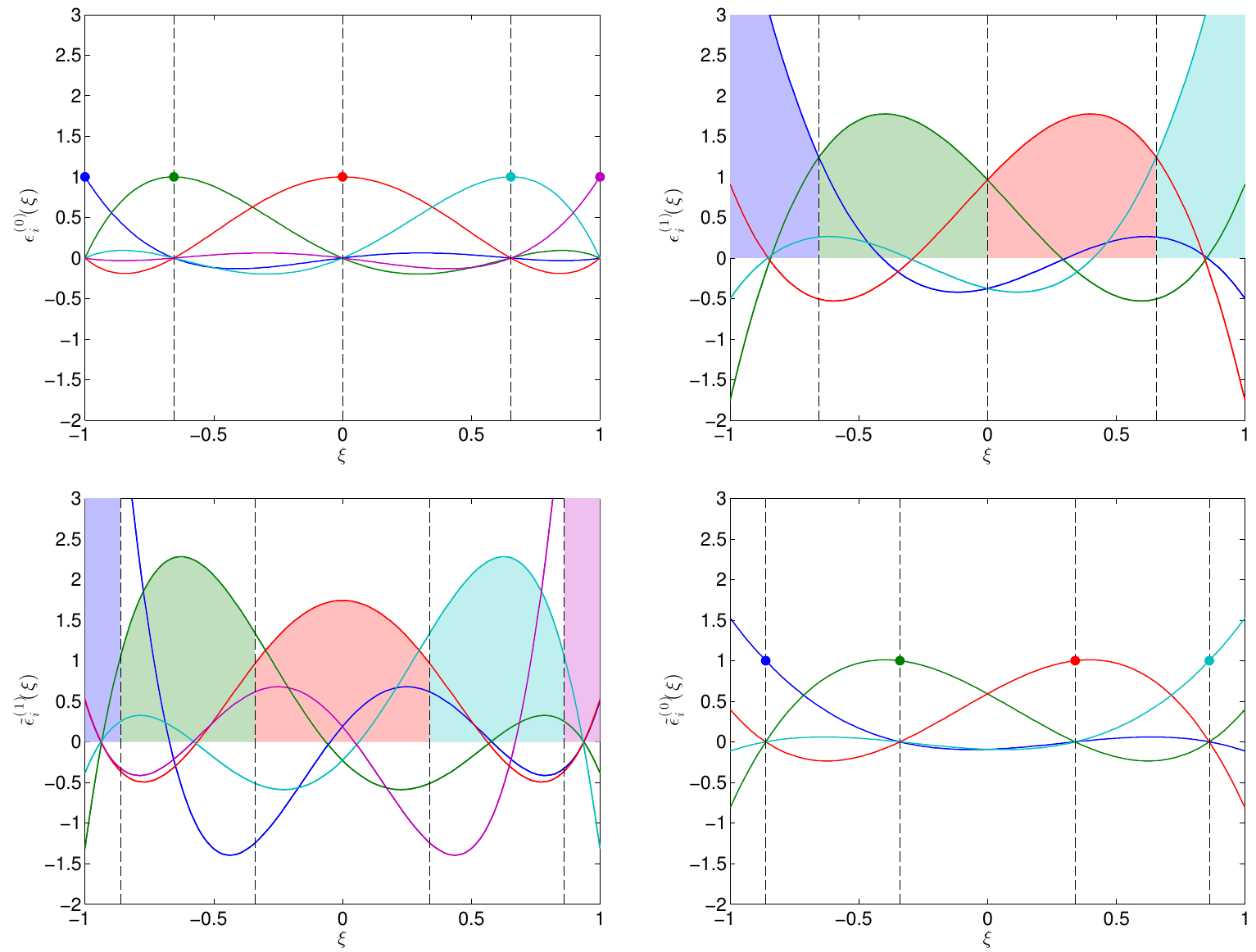}
			\caption{Basis functions associated to a $p=4$ spectral finite element. Top left: primal grid 0-form basis functions $\epsilon^{(0)}_{i}\left( \xi \right)$. Top right: primal grid 1-form basis functions $\epsilon^{(1)}_{i}\left( \xi \right)$. Bottom left: dual grid 1-form basis functions $\tilde{\epsilon}^{(1)}_{i}\left( \xi \right)$. Bottom right: dual grid 0-form basis functions $\tilde{\epsilon}^{(0)}_{i}\left( \xi \right)$. }
\label{fig:bassisfunctions}
		\end{figure}

\section{Time integrators}\label{sec:time_integrators}

\subsection{Introductory concepts}
As said before, the field of geometric integration is vast. Therefore, in what follows next, only key concepts essential to the understanding of the basic properties (such as symplecticity) of the numerical time integrators presented will be introduced. For a more detailed discussion and elaboration on these concepts, the authors suggest \cite{reich2005,Hairer2006}.
			
			\begin{definition}[\textbf{Symplectic form}]
				\cite{abraham_diff_geom} Let $\manifold{M}$ be a manifold of dimension $2n$. A symplectic form $\kdifform{\omega}{2}$ is a 2-form on $\manifold{M}$ such that:
				\begin{enumerate}
					\item $\kdifform{\omega}{2}$ is closed, that is $\ederiv\kdifform{\omega}{2}=0$;
					\item For each $p\in\manifold{M}$ and $\vec{v},\vec{u}\in\tangentspace{\manifold{M}}$ the identity $\kdifform{\omega}{2}(\vec{v},\vec{u})=0$ is satisfied if and only if $\vec{v}=0$ or $\vec{u}=0$ ($\kdifform{\omega}{2}$ is non-degenerate).
				\end{enumerate}
			\end{definition}
			
			For a Hamiltonian system, which involves a pair of $n$-dimensional variables $q$ (configuration variables) and $p$ (momenta), the symplectic form takes the form:
			\[
				\kdifform{\omega}{2} = \sum_{i}\ederiv q_{i}\wedge\ederiv p_{i}
			\]
			
			\begin{definition}[\textbf{Symplectic map}]
				\cite{abraham_diff_geom, Hairer2006} Given a $2n$-dimensional manifold $\manifold{M}$, a diffeomorphism $\Phi:\manifold{M}\mapsto\mathbb{R}^{2n}$ is a symplectic map if it preserves the symplectic form, that is:
				\[
					\Phi^{*}\kdifform{\omega}{2} = \kdifform{\omega}{2}
				\]
			\end{definition}
			
			\begin{definition}[\textbf{Hamiltonian vector field}]
				\cite{abraham_diff_geom} On a $2n$-dimensional manifold $\manifold{M}$ a vector field $\vec{v}_{H}\in\vectorfieldspace{\manifold{M}}$ is called Hamoltonian if there is a 0-form $\kdifform{H}{0}\in\Lambda^{0}(\manifold{M})$ such that:
				\[
					\iota_{\vec{v}_{H}}\kdifform{\omega}{2} = \ederiv \kdifform{H}{0}\;,
				\]
			\end{definition}
			and $\kdifform{\omega}{2}$ is the symplectic form. Moreover, in local coordinates, $\vec{v}_{H}$ takes the form:
			\[
				\vec{v}_{H} = \sum_{i}^{n} \left(\frac{\partial H}{\partial p_{i}}\frac{\partial}{\partial q_{i}} - \frac{\partial H}{\partial q_{i}}\frac{\partial}{\partial p_{i}}\right)\;.
			\]
			Additionally, it is possible to show, \cite{abraham_diff_geom, goldsteinClassicalMechanics}, that the integral curves generated by $\vec{v}_{H}$ satisfy Hamilton's equations:
			\[
				\frac{\ederiv q_{i}}{\ederiv t} = \frac{\partial H}{\partial p_{i}}\;,\quad \frac{\ederiv p_{i}}{\ederiv t} = - \frac{\partial H}{\partial q_{i}}\;,
			\]
			which is simply a restatement of \eqref{eq::hamiltonian_system}.
			
			From these definitions it follows that:
			
			\begin{theorem}[\textbf{Properties of Hamiltonian systems}]
				\cite{abraham_diff_geom,Hairer2006} Given a $2n$-dimensional manifold $\manifold{M}$ and an Hamiltonian vector field $\vec{v}_{H}$ associated to the Hamiltonian $\kdifform{H}{0}$, then the flow generated by the vector field, $\Phi_{t}$, has the following properties:
				\begin{enumerate}
					\item $\Phi_{t}$ is symplectic.
					\item The Hamiltonian $\kdifform{H}{0}$ is conserved along integral lines: $\Phi^{*}_{t}\kdifform{H}{0} = \kdifform{H}{0}$. 
				\end{enumerate} 
			\end{theorem}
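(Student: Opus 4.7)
The plan is to establish both properties by showing that the relevant quantities have vanishing Lie derivative along $\vec{v}_{H}$, then use $\Phi_{0}=\mathrm{id}$ together with $\frac{\ederiv}{\ederiv t}(\Phi_{t}^{*}\alpha)=\Phi_{t}^{*}(\mathcal{L}_{\vec{v}_{H}}\alpha)$ to integrate this back to an invariance statement. Cartan's magic formula, $\mathcal{L}_{\vec{v}}=\iota_{\vec{v}}\ederiv + \ederiv\iota_{\vec{v}}$, will be the workhorse for both parts.

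First I would prove symplecticity. The derivative along the flow is
\begin{equation*}
\frac{\ederiv}{\ederiv t}\,\Phi_{t}^{*}\kdifform{\omega}{2} \;=\; \Phi_{t}^{*}\,\mathcal{L}_{\vec{v}_{H}}\kdifform{\omega}{2} \;=\; \Phi_{t}^{*}\bigl(\iota_{\vec{v}_{H}}\ederiv\kdifform{\omega}{2} + \ederiv\iota_{\vec{v}_{H}}\kdifform{\omega}{2}\bigr).
\end{equation*}
The first term vanishes because $\kdifform{\omega}{2}$ is closed by the definition of a symplectic form. For the second term, the definition of a Hamiltonian vector field gives $\iota_{\vec{v}_{H}}\kdifform{\omega}{2}=\ederiv \kdifform{H}{0}$, so $\ederiv\iota_{\vec{v}_{H}}\kdifform{\omega}{2}=\ederiv\ederiv \kdifform{H}{0}=0$ by the nilpotency of the exterior derivative. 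Hence $\Phi_{t}^{*}\kdifform{\omega}{2}$ is independent of $t$, and since $\Phi_{0}=\mathrm{id}$ gives $\Phi_{0}^{*}\kdifform{\omega}{2}=\kdifform{\omega}{2}$, we conclude $\Phi_{t}^{*}\kdifform{\omega}{2}=\kdifform{\omega}{2}$, i.e.\ $\Phi_{t}$ is symplectic.

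For conservation of the Hamiltonian I would use the same differentiate-along-the-flow argument applied to the 0-form $\kdifform{H}{0}$. Since $\ederiv\kdifform{H}{0}$ has no $\ederiv$-part to kill off, we just compute
\begin{equation*}
\mathcal{L}_{\vec{v}_{H}}\kdifform{H}{0} \;=\; \iota_{\vec{v}_{H}}\ederiv\kdifform{H}{0} \;=\; \iota_{\vec{v}_{H}}\iota_{\vec{v}_{H}}\kdifform{\omega}{2} \;=\; 0,
\end{equation*}
where the last equality follows from the skew-symmetry of $\kdifform{\omega}{2}$: any alternating 2-form applied twice to the same vector vanishes. Therefore $\frac{\ederiv}{\ederiv t}\Phi_{t}^{*}\kdifform{H}{0}=0$, and the initial condition $\Phi_{0}=\mathrm{id}$ yields $\Phi_{t}^{*}\kdifform{H}{0}=\kdifform{H}{0}$.

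There is no real obstacle here beyond a careful bookkeeping of the two instances in which $\ederiv^{2}=0$ and the skew-symmetry of $\kdifform{\omega}{2}$ are invoked. If the reader is not assumed to know that $\frac{\ederiv}{\ederiv t}(\Phi_{t}^{*}\alpha)=\Phi_{t}^{*}\mathcal{L}_{\vec{v}_{H}}\alpha$, I would briefly justify this identity from the definition of the Lie derivative as the infinitesimal generator of the pullback by the flow; this is the only piece of machinery not already present in the excerpt, and it is entirely standard.
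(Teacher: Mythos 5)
Your proof is correct. The paper itself gives no argument here beyond a citation to \cite{abraham_diff_geom}, and your Cartan-formula computation (closedness of the symplectic form plus $\ederiv\ederiv \kdifform{H}{0}=0$ for symplecticity, and $\iota_{\vec{v}_{H}}\iota_{\vec{v}_{H}}\kdifform{\omega}{2}=0$ by skew-symmetry for energy conservation) is exactly the standard proof found in that reference, so it fills in the deferred argument in essentially the same way the cited source does.
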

			\begin{proof}
				See \cite{abraham_diff_geom}.
			\end{proof}
			
			Hence, integral lines are isocontours of the Hamiltonian. Therefore, given the symplecticity and energy conservation properties of Hamiltonian systems it is natural to search for numerical integrators that are symplectic or exact energy preserving, leading to numerical integral lines that remain in a bounded region close to the real integral curve (symplectic integrator) or lie on it (energy conserving integrator).
			
			\begin{definition}[\textbf{Symplectic numerical one-step method}]
				\cite{Hairer2006} A one-step numerical method, $\Phi_{h}:(q_{t_{0}},p_{t_{0}})\in\mathbb{R}^{2n}\mapsto(q_{t_{1}},p_{t_{1}})\in\mathbb{R}^{2n}$, is symplectic if the one step map:
				\[
					\left[\begin{array}{c}
						q_{t_{1}} \\
						p_{t_{1}}
					\end{array}\right]
					= 
					\Phi_{h}\left(
					\left[
					\begin{array}{c}
						q_{t_{0}} \\
						p_{t_{0}}
					\end{array}
					\right]\right)\;,
				\]
				is symplectic whenever the method is applied to a smooth Hamiltonian system.
			\end{definition}
			
			In this work we restrict our study to either symplectic or exact energy preserving time integrators. The choice between an integrator that is exact energy preserving or symplectic depends on the problem at hand, see for example \cite{Simo1992}. 
			
			The extension of the mimetic framework to time dependent problems, and in particular to the solution of first order ordinary differential equations, is done by noticing that the system of first order ordinary differential equations, \eqref{eq::TestCases_ode}:
			\[
				\frac{\ederiv \boldsymbol{y}}{\ederiv t} = \boldsymbol{h}(\boldsymbol{y}),\quad \boldsymbol{y}(t_{0}) = \boldsymbol{y}^{0},\quad \boldsymbol{y}\in\mathcal{M}\subset\mathbb{R}^{n}\quad \text{and} \quad t\in [t_{0},t_{f}]\subset\mathbb{R}\;,
			\]
			can be rewritten in differential geometric terms as:
			
			\begin{equation}
			   \star\ederiv \kdifform{\boldsymbol{y}}{0} = \kdifform{\boldsymbol{h}}{0}(\kdifform{\boldsymbol{y}}{0}), \quad\boldsymbol{y}^{(0)}(t_{0})=\boldsymbol{y}^{0},\quad \kdifform{y_{i}}{0}\in\Lambda^{0}([t_{0},t_{f}]), \quad  \kdifform{h_{i}}{0}\in\tilde{\Lambda}^{0}([t_{0},t_{f}])\;. \label{eq::TestCases_ode_primal_dual}
			\end{equation}
			That is, each coordinate of an integral curve is a 0-form in time and its time derivative is a 0-form on the dual grid, obtained by exterior differentiation in time. 
			
			Based on the formulation \eqref{eq::TestCases_ode_primal_dual}, two arbitrary order discrete integrators related to Gauss collocation methods can be constructed: one based on the canonical Hodge-$\star$ and another one based on the Galerkin Hodge-$\star$. Each discrete integrator has different properties, as will be seen.

			 Although we focus on Hamiltonian systems, the time integrators we will present can be applied to systems of any dimension (Hamiltonian and non-Hamiltonian) but, as will be shown, when applied to autonomous Hamiltonian systems, they display important properties. Therefore, for the sake of clarity, and without loss of generality, we present initially a first order ordinary differential equation only in one variable and then extend it to systems. For a system with one variable, the formulation presented in \eqref{eq::TestCases_ode_primal_dual}, becomes
			\begin{equation}
			   \star\ederiv \kdifform{y}{0} = \kdifform{h}{0}(\kdifform{y}{0}), \quad \kdifform{y}{0}\in\Lambda^{0}([t_{0},t_{f}]), \quad  \kdifform{h}{0}\in\tilde{\Lambda}^{0}( [t_{0},t_{f}])\;. \label{eq::TestCases_ode_primal_dual_one_variable}
			\end{equation}
			
			\subsection{Mimetic canonical integrator}
				As was done in  \secref{subsection:basis_forms}, the physical quantities are discretized in the following way:
				\[
					\kdifform{y}{0} \in \Lambda^{0}(\manifold{T}) \Rightarrow \kdifform{y_{h}}{0} \in \Lambda_{h}^{0}(\manifold{T}) \quad\textrm{and}\quad \kdifform{h}{0} \in \tilde{\Lambda}^{0}(\manifold{T}) \Rightarrow \kdifform{h_{h}}{0} \in \tilde{\Lambda}_{h}^{0}(\manifold{T})\;,
				\]
				where $\manifold{T} = [t_{0},t_{f}]$. The discrete physical quantities are represented as:
				\begin{equation}
					 \kdifform{y_{h}}{0} = \sum_{j=0}^{p} y^{j} \kdifform{\epsilon_{j}}{0}(\tau) \quad \textrm{and}\quad \kdifform{h_{h}}{0} = \sum_{j=1}^{p} h^{j} \kdifform{\tilde{\epsilon}_{j}}{0}(\tau)\;, \label{eq::TestCases_primal_dual_discretization_one_variable}
				\end{equation}
				where $\tau = \left(\frac{2(t-t_{0})}{t_{f}-t_{0}}-1\right)$ and $\kdifform{\epsilon_{j}}{0}(\tau)$, $\kdifform{\tilde{\epsilon}_{j}}{0}(\tau)$ are the 0-form basis functions on the primal and dual grids, respectively, as described in \secref{subsection:basis_forms}. The only difference here being that for the dual grid basis, $\tilde{\epsilon}_{i}^{(0)}$, we consider only the Gauss-Legendre nodes:
				\[
					\epsilon_{j}^{(0)}(\tau) = l_{j}(\tau) \quad j=0,\dots,p, \quad \text{and} \quad \tilde{\epsilon}_{j}^{(0)}(\tau) = \tilde{l}^{g}_{j}(\tau)\quad j=1,\dots,p\;.
				\]
				Note also that:
				\begin{equation}
					y^{j} = \kdifform{y}{0}(t^{j}) \quad\mathrm{and}\quad h^{j} = \kdifform{h}{0}\left(\kdifform{y}{0}(\tilde{\tau}^{j})\right) =  \kdifform{h}{0}\left(\sum_{j=1}^{p} y^{j} \kdifform{\epsilon_{j}}{0}(\tilde{\tau}^{j})\right)\;, \label{eq::TestCases_primal_dual_discreet_DOF_one_variable}
				\end{equation}
				where $\tau^{j}$ are the $(p+1)$ intermediate time instants associated to the nodes of the primal grid, $\tilde{\tau}^{j}$ are the $p$ intermediate time instants associated to the nodes of the dual grid and $t^{j} = (\tau^{j}+1)\frac{t_{f}-t_{0}}{2}+t_{0}$. It is important to remark that the upper index corresponds to degrees of freedom in time. A lower index will later correspond to distinct physical variables that appear in a system of ordinary differential equations. The choice to display an upper index instead of another lower index was done simply to make a distinction between space and time degrees of freedom. With this discretization the degrees of freedom appear staggered in time, as can be seen in \figref{fig:mimetic_integrator}.
				\begin{figure}[ht]
					\centering
						\includegraphics[width=0.3\textwidth]{./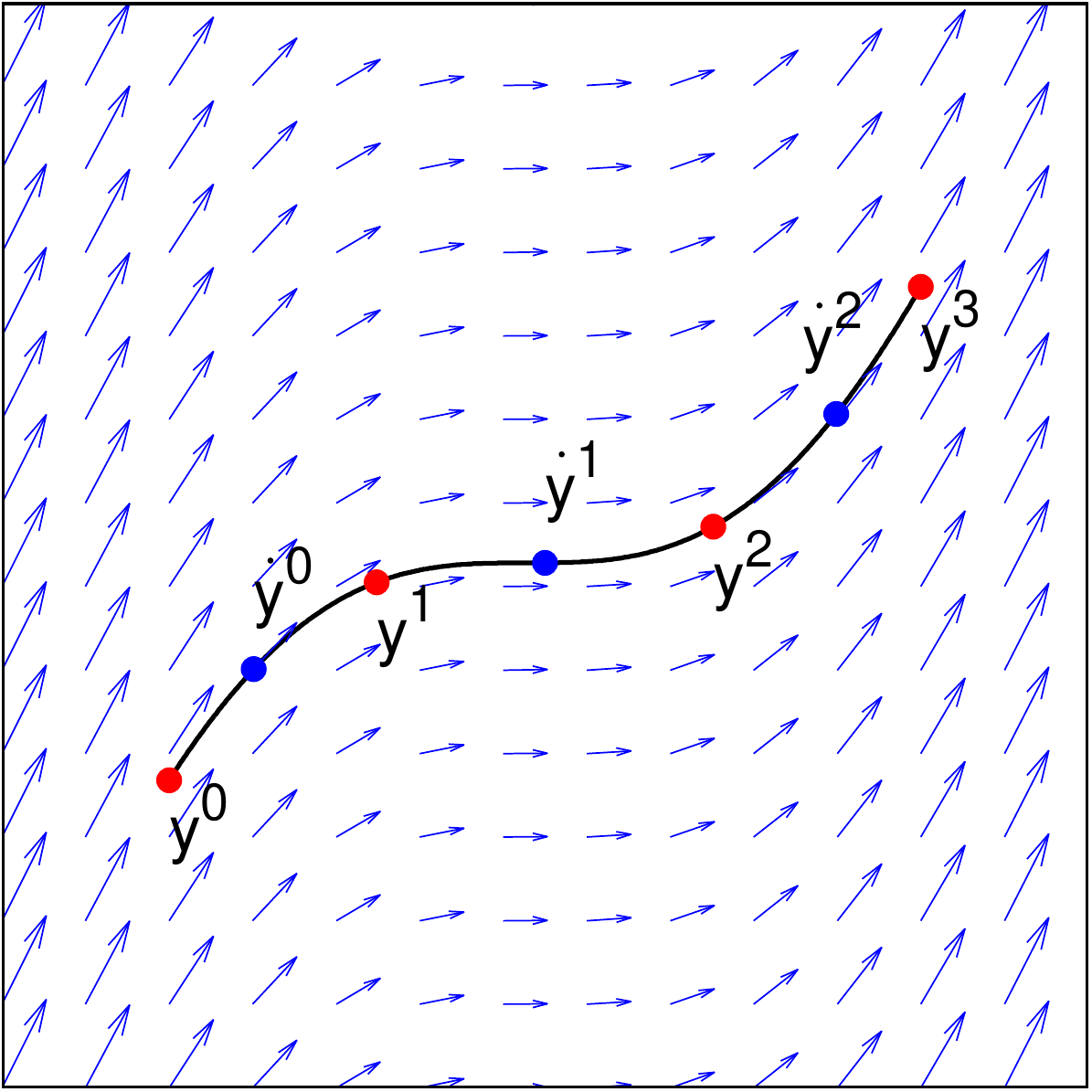}
						\caption{Geometric interpretation of the solution of \eqref{eq::TestCases_ode_primal_dual_one_variable} obtained with the mimetic discretization \eqref{eq::TestCases_primal_dual_integrator}: $(t,y^{(0)}(t))$. The nodes of the primal grid where the trajectory is discretized are represented in red (corresponding to Gauss-Lobatto-Legendre quadrature nodes). The nodes associated to the dual grid where the time derivative is discretized are depicted in blue (corresponding to Gauss-Legendre quadrature nodes). The flow field, represented by arrows, is tangent to the curve at the Gauss nodes. That is, the derivative of the approximate trajectory is exactly equal to the flow field at the Gauss nodes.}
						\label{fig:mimetic_integrator}
				\end{figure}
				
				Taking the discretization presented in \eqref{eq::TestCases_primal_dual_discretization_one_variable} and substituting in \eqref{eq::TestCases_ode_primal_dual_one_variable} we obtain:
				\begin{equation}
					\star\ederiv \left(\sum_{j=0}^{p}y^{j} \kdifform{\epsilon_{j}}{0}(\tau)\right) =  \sum_{j=1}^{p} h^{j} \kdifform{\tilde{\epsilon}_{j}}{0}(\tau)\;. \label{eq::TestCases_primal_dual_discrete_step_1}
				\end{equation}
				Given this discretization and using \eqref{eq:exterior_derivative_incidence_matrix}, it is possible to rewrite \eqref{eq::TestCases_primal_dual_discrete_step_1} as:
				\begin{equation}
					\star\sum_{j=1,k=0}^{p} \incidenceboundary{0}{1}^{k,j}y^{k} \kdifform{\epsilon_{j}}{1}(\tau) =  \sum_{j=1}^{p} h^{j} \kdifform{\tilde{\epsilon}_{j}}{0}(\tau)\;.\label{eq::ode_hodge_one_variable}
				\end{equation}
				Following the canonical projection procedure as  presented in \secref{subsection:mimetic_projection}, the Hodge-$\star$ of the term on the left can be computed, yielding:
				\begin{equation}
					\sum_{j=1,l=1,k=0}^{p} \incidenceboundary{0}{1}^{k,l}y^{k} e_{l}(\tilde{\tau}^{j})\, \kdifform{\tilde{\epsilon}_{j}}{0}(\tau) = \sum_{j=1}^{p} h^{j} \kdifform{\tilde{\epsilon}_{j}}{0}(\tau)\;. \label{eq::TestCases_canonical_projection}
				\end{equation} 
				Once more, since the basis 0-forms are the same, this equality can be stated as:
				\begin{equation}
					\sum_{l=1,k=0}^{p} \incidenceboundary{0}{1}^{k,l}y^{k} e_{l}(\tilde{\tau}^{j}) = h^{j}\;. \label{eq::TestCases_primal_dual_canonical_projection}
				\end{equation}
				Explicitly expanding $h^{j}$,  \eqref{eq::TestCases_primal_dual_discreet_DOF_one_variable}, one obtains:
				\begin{equation}
					\sum_{l=1,k=0}^{p} \incidenceboundary{0}{1}^{k,l}y^{k} e_{l}(\tilde{\tau}^{j}) = \kdifform{h}{0}\left(\sum_{k=0}^{p} y^{k} \kdifform{\epsilon_{k}}{0}(\tilde{\tau}^{j})\right), \quad j=1,\dots,p\;. \label{eq::TestCases_primal_dual_integrator_one_variable}
				\end{equation}
				This is a non-linear set of equations for the unknown values $y^{j}$ which can be solved, for instance, with a Newton-Raphson iterative solver, for known initial values $y^{0}=y(t_{0})$.
				
				For a system of  ordinary differential equations, as in \eqref{eq::TestCases_ode_primal_dual}, instead of a single time dependent unknown $y(t)$, we now have $M$ distinct time dependent unknowns $y_{i}$, with $i=1,\dots,M$. Following the same steps as presented previously, but now for multiple $y$'s, one arrives at the following algebraic system:
				\begin{equation}
					\sum_{l=1,k=0}^{p} \incidenceboundary{0}{1}^{k,l}y_{i}^{k} e_{l}(\tilde{\tau}^{j}) = \kdifform{h_{i}}{0}\left(\sum_{k=0}^{p} y_{1}^{k} \kdifform{\epsilon_{k}}{0}(\tilde{\tau}^{j}),\dots, \sum_{k=0}^{p} y_{M}^{k} \kdifform{\epsilon_{k}}{0}(\tilde{\tau}^{j})\right), \quad j=1,\dots,p \quad\text{and}\quad i=1,\dots,M\;.\label{eq::TestCases_primal_dual_integrator}
				\end{equation}
				The numerical time integrator resulting from this discretization will be referred to as mimetic canonical integrator, or MCI, since it is obtained from a canonical projection $\pi_{h}$. 
								
				\begin{remark}
					It is important to note that this formulation, although different from Gauss-Legendre collocation methods, yields identical results (up to machine accuracy). Gauss-Legendre collocation method constructs an interpolating polynomial with Gauss-Legendre nodes for both the trajectory and its derivative, whereas here we use an interpolating polynomial with Gauss-Lobatto-Legendre nodes for the trajectory (primal grid) and another interpolating polynomial with Gauss-Legendre nodes for its derivative (dual grid).
				\end{remark}

			\subsection{Mimetic Galerkin integrator}
				As was done for the mimetic canonical integrator, the physical quantities are defined in the following way:
				\[
					\kdifform{y}{0} \in \Lambda^{0}(\manifold{T}) \Rightarrow \kdifform{y_{h}}{0} \in \Lambda_{h}^{0}(\manifold{T}) \quad\textrm{and}\quad \kdifform{h}{0} \in \tilde{\Lambda}^{0}(\manifold{T})\;,
				\]
				where $\manifold{T} = [t_{0},t_{f}]$. The discrete physical quantities are represented as:
				\begin{equation}
					 \kdifform{y_{h}}{0} = \sum_{j=0}^{p} y^{j} \kdifform{\epsilon_{j}}{0}(\tau)\;, \label{eq::TestCases_primal_dual_discretization_one_variable_galerkin}
				\end{equation}
				where $\tau = \left(\frac{2(t-t_{0})}{t_{f}-t_{0}}-1\right)$ and $\kdifform{\epsilon_{j}}{0}(\tau)$ are the 0-form basis functions on the primal grid, as described in \secref{subsection:basis_forms}:
				\[
					\epsilon_{j}^{(0)}(\tau) = l_{j}(\tau) \quad j=0,\dots,p, \,.
				\]
				Note also that:
				\begin{equation}
					y^{j} = \kdifform{y}{0}(t^{j}) \;, \label{eq::TestCases_primal_dual_discreet_DOF_one_variable_galerkin}
				\end{equation}
				where $t^{j}=(\tau^{j}+1)\frac{t_{f}-t_{0}}{2}+t_{0}$ and $\tau^{j}$ are again the $(p+1)$ intermediate time instants associated with the nodes of the primal grid.				
				
				For the Galerkin Hodge formulation, instead of using a canonical projection to go from \eqref{eq::ode_hodge_one_variable} to  \eqref{eq::TestCases_canonical_projection} and to discretize $h^{(0)}$, one starts with \eqref{eq::TestCases_ode_primal_dual_one_variable}, discretizes the left hand side and applies \eqref{eq:L2_inner_product}:
			\begin{equation}
				\sum_{j=1,l=1,k=0}^{p} \incidenceboundary{0}{1}^{k,l}y^{k} e_{l}(\tilde{\tau}^{j})\, \int_{\mathcal{T}}\kdifform{\tilde{\epsilon}_{j}}{0}(\tau)\wedge\star\kdifform{\tilde{\epsilon}_{m}}{0}(\tau) =\innerspace{\kdifform{h}{0}\left(\sum_{k=0}^{p}y^{k}\epsilon_{k}^{(0)}(\tau)\right)}{\kdifform{\tilde{\epsilon}_{m}}{0}(\tau)}{\manifold{T}}\;, \quad m=1,\dots,p \;,\label{eq::TestCases_primal_primal_galerkin_projection_one_variable}
			\end{equation}
			where $\manifold{T}=[-1,1]$. Using \eqref{eq:L2_1d_0form} together with \eqref{eq:hodge_star_1d} the previous expression reduces to:
			\begin{equation}
				\sum_{j=1,l=1,k=0}^{p} \incidenceboundary{0}{1}^{k,l}y^{k} e_{l}(\tilde{\tau}^{j})\, \int_{-1}^{1}\tilde{l}_{j}^{g}(\tau)\tilde{l}_{m}^{g}(\tau)\sqrt{g}\,\ederiv \tau = \int_{-1}^{1}h\left(\sum_{k=0}^{p}y^{k}\epsilon_{k}^{(0)}(\tau)\right)\tilde{l}_{m}^{g}(\tau)\sqrt{g}\,\ederiv \tau\quad m=1,\dots,p\;,\label{eq::TestCases_primal_primal_galerkin_integrator_one_variable}
			\end{equation}
			where, for constant time steps, $\sqrt{g} = \frac{2}{t_{f}-t_{0}}$. This is again a non-linear set of equations for the unknown values $y^{j}$.
			
			As before, for a system of  ordinary differential equations, as in \eqref{eq::TestCases_ode_primal_dual}, there exists $M$ distinct time dependent unknowns $y_{i}$, with $i=1,\dots,M$. Following the same steps as presented previously, but now for multiple $y$'s, one arrives at the following algebraic system:
				\begin{equation}
				\sum_{j=1,l=1,k=0}^{p} \incidenceboundary{0}{1}^{k,l}y_{i}^{k} e_{l}(\tilde{\tau}^{j})\, \int_{-1}^{1}\tilde{l}^{g}_{j}(\tau)\tilde{l}^{g}_{m}(\tau)\sqrt{g}\,\ederiv \tau = \int_{I}\kdifform{h_{i}}{0}\left(\sum_{k=0}^{p}y_{1}^{k}\epsilon_{k}^{(0)}(\tau),\dots,\sum_{k=0}^{p}y_{M}^{k}\epsilon_{k}^{(0)}(\tau)\right)\tilde{l}^{g}_{m}(\tau)\,\ederiv \tau \;, \label{eq::TestCases_primal_primal_galerkin_integrator}
			\end{equation}
				with $m=1,\dots,p$ and $i=1,\dots,M$. The numerical time integrator resulting from this discretization will be referred to as mimetic Galerkin integrator, or MGI, since it is obtained from a Galerkin projection.
				
				\begin{remark}
					It is important to note that this formulation turns out to be a reformulation of the energy preserving collocation method introduced in \cite{hairerEnergyPreservingCollocation2010}, which in turn is an extension to higher order of the method introduced in \cite{Quispel2008}. 
				\end{remark}

			\subsection{Properties of the mimetic integrators}	
			Having presented the two mimetic integrators it is essential now to investigate their properties. In this section we will prove that the mimetic canonical integrator is symplectic and that the mimetic Galerkin integrator is energy preserving. 
			
			\subsubsection{Symplecticity of mimetic canonical integrator}
			We start by noting that  the mimetic canonical integrator is a special case of a Runge-Kutta method.
			\begin{definition}[\textbf{Runge-Kutta integrator}]
				\cite{Hairer2006} Take the system of ordinary differential equations in \eqref{eq::TestCases_ode} and let $b_{i}$, $a_{ij}$ ($i,j = 1, \dots, s$) be real numbers. An $s$-stage Runge-Kutta method is given by:
				\begin{align}
					\boldsymbol{k}_{i} &= \boldsymbol{h}\left(\boldsymbol{y}^{0} + \Delta t \sum_{j=1}^{s}a_{ij}\boldsymbol{k}_{j}\right),\quad i=1,\dots,s \label{eq:rk_k} \\
					\boldsymbol{y}^{1} &= \boldsymbol{y}^{0} + \Delta t \sum_{i=1}^{s}b_{i}\boldsymbol{k}_{i}\;. \label{eq:rk_y}
				\end{align}
			\end{definition}
			
			By setting $\boldsymbol{k}_{i} = \frac{\ederiv\boldsymbol{y}}{\ederiv t}\,(\tilde{t}^{i})$, expressing $\frac{\ederiv\boldsymbol{y}}{\ederiv t}\,(t)$ and, by integration, $\boldsymbol{y}(t)$ in terms of $\boldsymbol{k}_{i}$ it follows that the mimetic canonical integrator is a special case of a Runge-Kutta method with a full matrix of coefficients $a_{ij}$, therefore an implicit Runge-Kutta method. We show this explicitly for a first order differential equation only in one variable. For that we start with \eqref{eq:rk_k} and replace the right hand side using \eqref{eq::TestCases_primal_dual_integrator_one_variable}:
			\begin{equation}
				k_{i} = \sum_{l=1,k=0}^{p} \mathsf{E}_{(0,1)}^{k,l}y^{k}\epsilon^{(1)}_{l}(\tilde{\tau}^{i})\quad i=1,\dots,p\,.
			\end{equation}
			We can write this expression in matrix notation as:
			\begin{equation}
				\boldsymbol{k} = \mathsf{A}\,\mathsf{E}^{t}_{(0,1)}\,\boldsymbol{y}\,,
			\end{equation}
			where $\mathsf{A}_{i,l} = \epsilon^{(1)}_{l}(\tilde{\xi}^{i})$. This can be rewritten as:
			\begin{equation}
				\boldsymbol{k} = \mathsf{A}\,\left(\hat{\mathsf{E}}\,\hat{\boldsymbol{y}} + \hat{\boldsymbol{c}}\, y^{0}\right)\,,\label{eq:rk_deduction_k}
			\end{equation}
			where,
			\begin{align}
				\hat{\mathsf{E}}_{i,j} = \mathsf{E}_{(0,1)}^{j+1,i}, &\quad j=1,\dots,p, \quad i=1,\dots,p\,, \\
				c_{i} = \mathsf{E}_{(0,1)}^{0,i}, &\quad i=1,\dots,p\,, \\
				\hat{y}^{i} = y^{i}, &\quad i=1,\dots,p\,. 
			\end{align}
			We can then rewrite \eqref{eq:rk_deduction_k} into:
			\begin{equation}
				\hat{\mathsf{E}}^{-1}\mathsf{A}^{-1}\boldsymbol{k} - \hat{\mathsf{E}}^{-1}\hat{\boldsymbol{c}}\, y^{0}=\hat{\boldsymbol{y}} \,.\label{eq:rk_deduction_k_inverse}
			\end{equation}
			Given the definition of $\hat{\mathsf{E}}$ and $\hat{\boldsymbol{c}}$ we have that:
			\begin{align}
				\left(\hat{\mathsf{E}}^{-1}\right)_{i,j} &= \left\{\begin{array}{c}1\quad\text{if}\quad i\leq j \\ 0\quad\text{if}\quad i> j \\ \end{array}\right. \,,\\
				-\hat{\mathsf{E}}^{-1}\hat{\boldsymbol{c}}\,y^{0} &= \left[\begin{array}{c}y^{0} \\ \vdots \\ y^{0}\end{array}\right]\,.
			\end{align}
			From \eqref{eq:rk_deduction_k_inverse} it is straightforward to see that:
			\begin{equation}
				y^{p} = \sum_{i=1}^{p}\left(\hat{\mathsf{E}}^{-1}\mathsf{A}^{-1}\right)_{p,i}k_{i} + y^{0}\,. \label{eq:rk_b_mimetic}
			\end{equation}
			From \eqref{eq:rk_b_mimetic} and \eqref{eq:rk_y} we can directly see that:
			\[
				b_{i} = \frac{1}{\Delta t}\left(\hat{\mathsf{E}}^{-1}\mathsf{A}^{-1}\right)_{p,i}\,.
			\]
			From \eqref{eq:rk_deduction_k_inverse} and \eqref{eq:rk_k} we have,
			\[
				a_{i,j} = \frac{1}{\Delta t}\left(\hat{\mathsf{E}}^{-1}\mathsf{A}^{-1}\right)_{i,j}\,.
			\]
			
			\begin{theorem}[\textbf{Symplecticity of Runge-Kutta integrators}] \label{theorem::symplecticity_runge_kutta}
				\cite{Hairer2006} If a Runge-Kutta method conserves quadratic first integrals (i.e. $I(\boldsymbol{y}_{1}) = I(\boldsymbol{y}_{0})$ whenever $I(\boldsymbol{y}) = \boldsymbol{y}^{t}\matrixoperator{C}\boldsymbol{y}$, with symmetric matrix $\matrixoperator{C}$, is a first integral of  \eqref{eq::TestCases_ode}), then it is symplectic.
			\end{theorem}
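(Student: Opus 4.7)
The plan is to recast symplecticity as a quadratic conservation law for an enlarged system, so that the hypothesis can be applied directly. Recall that symplecticity of the discrete flow $\Phi_{\Delta t}$ means $\Psi_1^{T}\mathsf{J}\Psi_1 = \mathsf{J}$, where $\Psi_1 = \partial \boldsymbol{y}^{1}/\partial \boldsymbol{y}^{0}$. The idea is to embed this identity into the framework of quadratic first integrals by considering the system augmented with its variational equation, and then invoke the assumed conservation property of the Runge--Kutta method.

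First I would write down the variational equation associated to \eqref{eq::TestCases_ode} with $\boldsymbol{h}(\boldsymbol{y}) = \mathsf{J}^{-1}\nabla H(\boldsymbol{y})$: the Jacobian $\Psi(t) = \partial\boldsymbol{y}(t)/\partial\boldsymbol{y}^{0}$ satisfies the linear ODE
\begin{equation*}
\dot{\Psi} = \boldsymbol{h}'(\boldsymbol{y})\Psi, \qquad \Psi(0) = \mathsf{I},
\end{equation*}
where $\boldsymbol{h}'(\boldsymbol{y}) = \mathsf{J}^{-1}\nabla^{2}H(\boldsymbol{y})$. The combined system for $(\boldsymbol{y},\Psi)$ is itself an ODE on an enlarged phase space. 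A direct computation shows that the matrix-valued function $I(\boldsymbol{y},\Psi):=\Psi^{T}\mathsf{J}\Psi$ satisfies $\dot{I}=\Psi^{T}\bigl((\boldsymbol{h}')^{T}\mathsf{J}+\mathsf{J}\boldsymbol{h}'\bigr)\Psi$, and the bracketed term vanishes because $\mathsf{J}\boldsymbol{h}' = \nabla^{2}H$ is symmetric while $\mathsf{J}^{T} = -\mathsf{J}$. Hence each entry of $\Psi^{T}\mathsf{J}\Psi$ is a quadratic first integral of the augmented system, with the quadratic form acting only on the $\Psi$ components.

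Next I would invoke the key structural fact about Runge--Kutta methods: if the method is applied to the augmented system $(\boldsymbol{y},\Psi)$, the resulting update for $\Psi$ coincides with the discrete variational equation of the Runge--Kutta update for $\boldsymbol{y}$ alone; that is, differentiating the stage equations \eqref{eq:rk_k}--\eqref{eq:rk_y} of the one-step map with respect to $\boldsymbol{y}^{0}$ produces exactly the Runge--Kutta stages for the linear ODE governing $\Psi$. Consequently, the discrete $\Psi_1$ obtained as $\partial \boldsymbol{y}^{1}/\partial \boldsymbol{y}^{0}$ is the Runge--Kutta update applied to the augmented system starting from $(\boldsymbol{y}^{0},\mathsf{I})$.

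Now the hypothesis applies: because the method conserves every quadratic first integral of any ODE to which it is applied, and because each component of $\Psi^{T}\mathsf{J}\Psi$ is such an integral of the augmented system, we get $\Psi_1^{T}\mathsf{J}\Psi_1 = \Psi_0^{T}\mathsf{J}\Psi_0 = \mathsf{I}^{T}\mathsf{J}\mathsf{I} = \mathsf{J}$, which is precisely the symplecticity condition for the one-step map. The main obstacle is the structural lemma in the second paragraph — the fact that ``Runge--Kutta commutes with taking variations'' — since it rests on differentiating the implicit stage equations and showing the resulting linear system is exactly the Runge--Kutta discretization of $\dot{\Psi}=\boldsymbol{h}'(\boldsymbol{y})\Psi$. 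Everything else is then a one-line application of the assumed conservation property.
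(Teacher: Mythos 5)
Your argument is correct: the paper offers no proof of this theorem beyond the citation to \cite{Hairer2006}, and what you have written is precisely the standard argument found there --- augment the system with its variational equation, check that the entries of $\Psi^{T}\mathsf{J}\Psi$ (after symmetrization each entry is of the form $Y^{T}\mathsf{C}Y$ with a constant symmetric $\mathsf{C}$ acting on the augmented variable $Y=(\boldsymbol{y},\Psi)$) are quadratic first integrals, and use the fact that the Runge--Kutta map commutes with differentiation with respect to the initial condition. Since this matches the intended (cited) proof in both structure and key lemma, the only technical point worth recording is the one you already flag: the commutation lemma follows by implicit differentiation of the stage equations \eqref{eq:rk_k}--\eqref{eq:rk_y}, which is legitimate for sufficiently small $\Delta t$ where those equations are uniquely solvable.
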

			\begin{proof}
				See \cite{Hairer2006}.
			\end{proof}
			
			\begin{remark}
			It is important to note that  a quadratic first integral $I(\boldsymbol{y}) = \boldsymbol{y}^{t}\mathsf{C}\boldsymbol{y}$, with $\mathsf{C}$ constant, is conserved if:
			\begin{equation}
				\frac{\ederiv I}{\ederiv t} = \frac{\ederiv }{\ederiv t}\, \left(\boldsymbol{y}^{t}\mathsf{C}\boldsymbol{y}\right) = 2\boldsymbol{y}^{t}\mathsf{C}\frac{\ederiv \boldsymbol{y}}{\ederiv t} = 2\boldsymbol{y}^{t}\mathsf{C}\boldsymbol{h}(\boldsymbol{y}) = 0,\quad\forall \boldsymbol{y} \;. \label{invariant_first_integral}
			\end{equation}
			We stress that this equality holds for all points in the phase space.
			\end{remark}
			\begin{theorem}[\textbf{Mimetic canonical integrator, \eqref{eq::TestCases_primal_dual_integrator}, is symplectic}]
				The mimetic canonical integrator, \eqref{eq::TestCases_primal_dual_integrator}, is symplectic.
			\end{theorem}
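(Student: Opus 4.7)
The plan is to exploit the Runge--Kutta characterization of the mimetic canonical integrator already established immediately before the statement, and then invoke \theoremref{theorem::symplecticity_runge_kutta}. By that theorem, it suffices to prove that the MCI conserves every quadratic first integral $I(\boldsymbol{y}) = \boldsymbol{y}^{t}\matrixoperator{C}\boldsymbol{y}$ of \eqref{eq::TestCases_ode}, with $\matrixoperator{C}$ symmetric and constant. Equivalently, writing $\boldsymbol{y}^{0}$ for the initial condition and $\boldsymbol{y}^{p}$ for the endpoint produced by one step of the MCI, I must show
\begin{equation*}
	(\boldsymbol{y}^{p})^{t}\matrixoperator{C}\boldsymbol{y}^{p} \;=\; (\boldsymbol{y}^{0})^{t}\matrixoperator{C}\boldsymbol{y}^{0}.
\end{equation*}

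The central idea is a collocation/quadrature argument. Let $\kdifformh{\boldsymbol{y}}{0}(t)$ denote the degree-$p$ primal polynomial reconstruction \eqref{eq::TestCases_primal_dual_discretization_one_variable} of the MCI solution, so that $\dot{\boldsymbol{y}}_{h}$ is a polynomial of degree $p-1$. By the fundamental theorem of calculus and the symmetry of $\matrixoperator{C}$,
\begin{equation*}
	(\boldsymbol{y}^{p})^{t}\matrixoperator{C}\boldsymbol{y}^{p} - (\boldsymbol{y}^{0})^{t}\matrixoperator{C}\boldsymbol{y}^{0} = \int_{t_{0}}^{t_{f}} 2\,\kdifformh{\boldsymbol{y}}{0}(t)^{t}\matrixoperator{C}\,\dot{\boldsymbol{y}}_{h}(t)\,\ederiv t.
\end{equation*}
The integrand is a polynomial of degree $p + (p-1) = 2p-1$. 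Since the dual nodes $\tilde{\tau}^{j}$ are the $p$ Gauss--Legendre points, the Gauss quadrature rule of order $p$ integrates this polynomial exactly. Using this quadrature, together with the collocation identity $\dot{\boldsymbol{y}}_{h}(\tilde{t}^{j}) = \kdifform{\boldsymbol{h}}{0}\bigl(\kdifformh{\boldsymbol{y}}{0}(\tilde{t}^{j})\bigr)$ built into \eqref{eq::TestCases_primal_dual_integrator} and recognized geometrically in \figref{fig:mimetic_integrator}, yields
\begin{equation*}
	\int_{t_{0}}^{t_{f}} 2\,\kdifformh{\boldsymbol{y}}{0}(t)^{t}\matrixoperator{C}\,\dot{\boldsymbol{y}}_{h}(t)\,\ederiv t = \sum_{j=1}^{p} w_{j}\,2\,\kdifformh{\boldsymbol{y}}{0}(\tilde{t}^{j})^{t}\matrixoperator{C}\,\boldsymbol{h}\bigl(\kdifformh{\boldsymbol{y}}{0}(\tilde{t}^{j})\bigr).
\end{equation*}
By the remark preceding the statement, $I(\boldsymbol{y})$ being a first integral means $2\boldsymbol{y}^{t}\matrixoperator{C}\boldsymbol{h}(\boldsymbol{y})=0$ for every $\boldsymbol{y}$ in phase space; in particular this holds at each intermediate state $\kdifformh{\boldsymbol{y}}{0}(\tilde{t}^{j})$. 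Hence every term in the quadrature sum vanishes, the integral is zero, and $I$ is preserved. Invoking \theoremref{theorem::symplecticity_runge_kutta} then gives symplecticity.

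The main technical obstacle is really just the verification that the exactness of Gauss--Legendre quadrature on degree-$(2p-1)$ polynomials aligns precisely with the staggering inherent in the mimetic discretization: the primal polynomial has degree $p$ and the dual-grid collocation uses $p$ Gauss nodes, so this is tight but sufficient. Once this degree count is checked, the remainder is bookkeeping. For the full system version \eqref{eq::TestCases_primal_dual_integrator} the argument is componentwise identical, since $\matrixoperator{C}$ acts blockwise on the vector $\boldsymbol{y}$ and each component $y_{i,h}$ is again a degree-$p$ polynomial, so $\kdifformh{\boldsymbol{y}}{0}{}^{t}\matrixoperator{C}\dot{\boldsymbol{y}}_{h}$ remains of degree $2p-1$ and the Gauss rule still integrates it exactly.
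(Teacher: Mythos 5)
Your proposal is correct and follows essentially the same route as the paper's own proof: it reduces symplecticity to conservation of quadratic first integrals via \theoremref{theorem::symplecticity_runge_kutta}, and establishes that conservation by writing $I(\boldsymbol{y}_{h}(t_{f}))-I(\boldsymbol{y}_{h}(t_{0}))$ as the integral of a degree-$(2p-1)$ polynomial, evaluating it exactly with Gauss quadrature at the dual-grid nodes, and using the collocation identity there together with $\boldsymbol{y}^{t}\mathsf{C}\boldsymbol{h}(\boldsymbol{y})=0$. The degree-count justification you spell out is the same tightness argument implicit in the paper's proof.
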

			\begin{proof}
				Consider a system of ordinary differential equations such as in \eqref{eq::TestCases_ode} and take $\boldsymbol{y}_{h}$ as the approximated polynomial solution obtained by the mimetic canonical projection integrator. Assume $I(\boldsymbol{y}) = \boldsymbol{y}^{t}\mathsf{C}\boldsymbol{y}$ is a quadratic invariant of the system \eqref{eq::TestCases_ode}, see \eqref{invariant_first_integral}. Therefore:
				\[
					I(\boldsymbol{y}_{h}(t_{f})) - I(\boldsymbol{y}_{h}(t_{0})) = \int_{t_{0}}^{t_{f}} \frac{\ederiv I}{\ederiv t}\,\ederiv t = \int_{t_{0}}^{t_{f}}2\boldsymbol{y}_{h}^{t}\mathsf{C}\frac{\ederiv \boldsymbol{y}_{h}}{\ederiv t} \ederiv t\;.
				\]
				The integral term on the right is a polynomial of degree at most $(2p-1)$, which can be exactly integrated by Gauss quadrature, over $p$ collocation points (the Gauss nodes of the dual grid), $\tilde{t}^{\,k}$, using the integration weights $w_{k}$. Therefore, one can write:
				\[
					I(\boldsymbol{y}_{h}(t_{f})) - I(\boldsymbol{y}_{h}(t_{0})) = \sum_{k}\boldsymbol{y}_{h}\left(\tilde{t}^{\,k}\right)^{t}\mathsf{C}\frac{\ederiv \boldsymbol{y}_{h}}{\ederiv t}\left(\tilde{t}^{\,k}\right) w_{k}\;.
				\]
				By construction of the mimetic canonical integrator we have that:
				\[
					\frac{\ederiv \boldsymbol{y}_{h}}{\ederiv t}\left(\tilde{t}^{\,k}\right) = \boldsymbol{h}\left(\boldsymbol{y}_{h}\left(\tilde{t}^{\,k}\right)\right)\;,
				\]
				and consequently:
				\[
					I(\boldsymbol{y}_{h}(t_{f})) - I(\boldsymbol{y}_{h}(t_{0})) = \sum_{k}2\boldsymbol{y}_{h}\left(\tilde{t}^{\,k}\right)^{t}\mathsf{C}\frac{\ederiv \boldsymbol{y}_{h}}{\ederiv t}\left(\tilde{t}^{\,k}\right) w_{k} =  \sum_{k}2\boldsymbol{y}_{h}\left(\tilde{t}^{\,k}\right)^{t}\mathsf{C}\, \boldsymbol{h}\left(\boldsymbol{y}_{h}\left(\tilde{t}^{\,k}\right)\right) w_{k}\;.
				\]
				Equation \eqref{invariant_first_integral} states that  if the continuous system of equations conserves a quadratic first integral then:
				\[
					\boldsymbol{y}^{t}\mathsf{C}\boldsymbol{h}(\boldsymbol{y}) = 0, \quad \forall \boldsymbol{y}\,.
				\]
				Therefore all the terms in the summation term on the right hand side are zero:
				\[
					\boldsymbol{y}_{h}\left(\tilde{t}^{\,k}\right)^{t}\mathsf{C}\, \boldsymbol{h}\left(\boldsymbol{y}_{h}\left(\tilde{t}^{\,k}\right)\right)  = 0\,.
				\]
				This directly implies that:
				\[
					I(\boldsymbol{y}_{h}(t_{f})) - I(\boldsymbol{y}_{h}(t_{0})) = 0\;.
				\]
				Therefore quadratic first integrals are conserved by the mimetic canonical projection integrator. Since it is a Runge-Kutta method, by \theoremref{theorem::symplecticity_runge_kutta} it is symplectic.
			\end{proof}
		\subsubsection{Energy conservation of mimetic Galerkin integrator}
			Consider the case of autonomous Hamiltonian systems, where $\boldsymbol{y} = (\boldsymbol{p},\boldsymbol{q})$, $\boldsymbol{q}\in\mathbb{R}^{m}$, $\boldsymbol{p}\in\mathbb{R}^{m}$ and,

\begin{equation}
	\frac{\ederiv \boldsymbol{y}}{\ederiv t} = \mathsf{J}^{-1}\nabla H(\boldsymbol{y}),\quad \boldsymbol{y}(t_{0}) = \boldsymbol{y}^{0},\quad \boldsymbol{y}\in\mathcal{M}\subset\mathbb{R}^{2m}\quad \text{and} \quad t\in I\subset\mathbb{R} \quad \text{with}\quad \mathsf{J} = \left[\begin{array}{cc} \mathsf{0} & \mathsf{I}_{m} \\ -\mathsf{I}_{m}  & \mathsf{0}\end{array}\right]\;,\label{eq::hamiltonian_system_proof}
\end{equation}
where $\mathsf{I}_{m}$ is the $m\times m$ identity matrix. The solution obtained with the mimetic Galerkin integrator, \eqref{eq::TestCases_primal_primal_galerkin_integrator}, exactly preserves the energy of Hamiltonian systems. To prove that, we will follow a similar approach to \cite{hairerEnergyPreservingCollocation2010,Quispel2008}.
			
			\begin{theorem}
				The mimetic Galerkin integrator of \eqref{eq::TestCases_primal_primal_galerkin_integrator} exactly preserves the energy of Hamiltonian systems.
			\end{theorem}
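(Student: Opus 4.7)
The plan is to establish the conservation $H(y_h(t_f)) = H(y_h(t_0))$ where $y_h$ is the polynomial trajectory of degree $p$ produced by the mimetic Galerkin integrator. The starting identity is the fundamental theorem of calculus applied to the smooth scalar function $t \mapsto H(y_h(t))$:
\[
H(y_h(t_f)) - H(y_h(t_0)) = \int_{t_0}^{t_f} \nabla H(y_h(t)) \cdot \dot{y}_h(t)\, \ederiv t\,,
\]
so it suffices to prove that this integral vanishes. Crucially, $\dot{y}_h$ is a vector-valued polynomial of degree $p-1$ in $t$, and therefore so is $\mathsf{J}\dot{y}_h$.

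The next step is to reinterpret the integrator equation \eqref{eq::TestCases_primal_primal_galerkin_integrator} as a genuine Galerkin orthogonality relation. On the left-hand side, the sum over $j$ is precisely the Gauss-Legendre nodal expansion of $\dot{y}_{h,i}$ in the Lagrange basis $\{\tilde{l}^g_j\}_{j=1}^p$; since this basis exactly reproduces polynomials of degree $p-1$, the left-hand side equals $\int \dot{y}_{h,i}\, \tilde{l}^g_m\, \ederiv t$. The right-hand side is literally $\int h_i(y_h)\, \tilde{l}^g_m\, \ederiv t$. Because $\{\tilde{l}^g_m\}_{m=1}^p$ is a basis of $\mathbb{P}_{p-1}$, the integrator is equivalent, component-by-component and then by linear combination, to
\[
\int_{t_0}^{t_f} \dot{y}_h \cdot \phi \, \ederiv t = \int_{t_0}^{t_f} h(y_h) \cdot \phi \, \ederiv t \qquad \forall \phi \in \mathbb{P}_{p-1}^M.
\]

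The decisive step is to choose the admissible test function $\phi = \mathsf{J} \dot{y}_h$. The left-hand side vanishes pointwise by skew-symmetry of $\mathsf{J}$ (since $\dot{y}_h^T \mathsf{J} \dot{y}_h \equiv 0$), hence $\int h(y_h) \cdot (\mathsf{J}\dot{y}_h)\, \ederiv t = 0$. Combined with $h(y_h) = \mathsf{J}^{-1}\nabla H(y_h)$ and the algebraic identity $(\mathsf{J}^{-1})^T \mathsf{J} = -\mathsf{I}$ (from $\mathsf{J}^{-1} = \mathsf{J}^T = -\mathsf{J}$), this rearranges into
\[
-\int_{t_0}^{t_f} \nabla H(y_h) \cdot \dot{y}_h\, \ederiv t = 0\,,
\]
which combined with the fundamental-theorem identity delivers $H(y_h(t_f)) = H(y_h(t_0))$.

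The proof has no serious technical obstacle; the main delicate point is the Galerkin reinterpretation of \eqref{eq::TestCases_primal_primal_galerkin_integrator}, namely recognizing that the integrator imposes the weak equation $\int \dot{y}_h \cdot \phi\, \ederiv t = \int h(y_h)\cdot\phi\,\ederiv t$ against \emph{all} test polynomials of degree $p-1$ (not just the nodal basis), and that the right-hand side is evaluated exactly rather than by quadrature. Once this is in place, the trick of testing against $\mathsf{J}\dot{y}_h$ is possible precisely because $\dot{y}_h$ is polynomial of degree $p-1$: unlike $\nabla H(y_h)$, which is generally non-polynomial for nonlinear Hamiltonians, $\mathsf{J}\dot{y}_h$ lies in the admissible test space. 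The sign bookkeeping for $\mathsf{J}$ is routine but must be done carefully with the convention $\mathsf{J}^{-1} = -\mathsf{J} = \mathsf{J}^T$.
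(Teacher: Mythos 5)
Your proof is correct, and it reaches the conclusion by a cleaner variational route than the one taken in the paper. The paper also starts from the fundamental theorem of calculus, but it then works at the level of the discrete degrees of freedom: it forms the Galerkin (mass) matrix $\mathsf{N}_{i,j}=\int_{\manifold{T}}\kdifform{\tilde{\epsilon}_{i}}{0}\wedge\star\kdifform{\tilde{\epsilon}_{j}}{0}$, shows by exactness of Gauss--Legendre quadrature on $p$ nodes that $\mathsf{N}$ is diagonal, inverts it, substitutes the resulting representation of $\star\ederiv y^{(0)}_{i,h}$ back into the energy difference, and finally recognizes the remaining double sum as a quadratic form built from the weighted projections $\int_{\manifold{T}}\left(\nabla H\right)^{(0)}_{i}\wedge\star\kdifform{\tilde{\epsilon}_{m}}{0}$ contracted with the skew-symmetric matrix, which therefore vanishes. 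You instead recognize the scheme as a weak (Petrov--Galerkin) statement against all of $\mathbb{P}_{p-1}^{M}$ --- using that the Gauss--Legendre nodal expansion reproduces the degree-$(p-1)$ polynomial $\dot{\boldsymbol{y}}_h$ exactly --- and then test with $\phi=\mathsf{J}\dot{\boldsymbol{y}}_h$, so skew-symmetry is applied pointwise to $\dot{\boldsymbol{y}}_h^{T}\mathsf{J}\dot{\boldsymbol{y}}_h\equiv 0$ rather than to a matrix of projection integrals. This is essentially the argument of the energy-preserving collocation literature that the paper cites as the origin of the MGI; what it buys is the elimination of the mass-matrix inversion, the quadrature-diagonality lemma, and most of the index bookkeeping, while making transparent that the only requirements are (i) exact evaluation of the right-hand-side integrals and (ii) $\mathsf{J}\dot{\boldsymbol{y}}_h$ lying in the test space because $\dot{\boldsymbol{y}}_h$ has degree $p-1$. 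What the paper's version buys in exchange is that it stays within the discrete cochain/Hodge formalism and exhibits explicitly the role of the Galerkin Hodge matrix $\mathsf{N}$, which is the object that distinguishes the MGI from the MCI. Your sign conventions ($\mathsf{J}^{-1}=\mathsf{J}^{T}=-\mathsf{J}$, hence $(\mathsf{J}^{-1})^{T}\mathsf{J}=-\mathsf{I}$) are handled correctly; the only cosmetic remark is that the constant metric factor $\sqrt{g}$ (the rescaling between $\tau\in[-1,1]$ and $t\in[t_0,t_f]$) should appear consistently on both sides of the weak form, which your formulation in the physical time variable $t$ implicitly ensures.
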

			\begin{proof}
				From the fundamental theorem of calculus:
				\[
					H(\boldsymbol{y}_{h}(t_{f})) - H(\boldsymbol{y}_{h}(t_{0})) =\int_{\gamma(t)} \ederiv H(\boldsymbol{y}_{h}(t))\;,
				\]
				where $\gamma(t)$ is the path along which the line integral is computed. Using the chain rule and expanding the terms on the right hand side this can written as:
				\[
					H(\boldsymbol{y}_{h}(t_{f})) - H(\boldsymbol{y}_{h}(t_{0})) =\sum_{i=1}^{2m}\int_{t_{0}}^{t_{f}} \left(\frac{\ederiv \boldsymbol{y}_{i,h}}{\ederiv t}\right)^{t} \, \frac{\partial H}{\partial y_{i}}(\boldsymbol{y}_{h}(t))\,\ederiv t\;,
				\]
				In differential form notation this can be written as:
				\begin{equation}
					H(\boldsymbol{y}_{h}(t_{f})) - H(\boldsymbol{y}_{h}(t_{0})) =\sum_{i=1}^{2m}\int_{\manifold{T}} \star\ederiv y^{(0)}_{i,h}\wedge \star\left(\frac{\partial H}{\partial y_{i}}(\boldsymbol{y}_{h})\right)^{(0)}\;, \label{eq::ode_fundamental_theorem_calculus_diff_geom}
				\end{equation}
				where $\manifold{T} = [-1,1]$. Since:
				\begin{equation}
					\sum_{j=1,l=1,k=0}^{p} \incidencederivative{0}{1}_{k,l}y_{i}^{k}e_{l}(\tilde{\tau}^{j})\, \kdifform{\tilde{\epsilon}_{j}}{0}(\tau) =  \star\ederiv y^{(0)}_{i,h},\quad i=1,\dots,2m\;,\label{eq::ode_galerking_proof_01}
				\end{equation}
			      using \eqref{eq::TestCases_primal_primal_galerkin_integrator} and noting that for Hamiltonian systems we have that $\boldsymbol{h}^{(0)} = \left(\mathsf{J}^{-1}\,\nabla H\right)^{(0)}$, and therefore,
			      \[
			      	h_{i}^{(0)} = \left(\mathsf{J}^{-1}\,\nabla H\right)^{(0)}_{i} = \left(\sum_{j=1}^{2m}\mathsf{J}^{-1}_{ij}\,\frac{\partial H}{\partial y_{j}}\right)^{(0)}\,,
				\]
				it is possible to write:
			      \begin{equation}
				\sum_{j=1,l=1,k=0}^{p} \incidencederivative{0}{1}_{k,l}y_{i}^{k} e_{l}(\tilde{\tau}^{j}) \int_{\manifold{T}}\tilde{\epsilon}_{j}^{(0)}\wedge\star\tilde{\epsilon}_{r}^{(0)}= \int_{\manifold{T}}\left(\mathsf{J}^{-1}\,\nabla H\right)^{(0)}_{i}\wedge\star\kdifform{\tilde{\epsilon}_{r}}{0}\;, \label{eq::TestCases_primal_primal_galerkin_integrator_proof_energy}
			\end{equation}
			where $r=1,\dots,p$ and $i=1,\dots,2m$. We then notice that:
			\begin{equation}
				\mathsf{N}_{i,j} := \int_{\manifold{T}}\kdifform{\tilde{\epsilon}_{i}}{0}\wedge\star\kdifform{\tilde{\epsilon}_{j}}{0} = \int_{-1}^{1} \tilde{l}_{i}^{g}(\tau)\tilde{l}_{j}^{g}(\tau)\sqrt{g}\,\ederiv\tau \;. \label{eq::ode_matrix_N_before}
			\end{equation}
			This integral can be exactly evaluated by Gaussian quadrature. Since $\tilde{l}^{g}_{i}(\tau)$ is a polynomial of degree $(p-1)$, the product will be a polynomial of degree at most $2(p-1)$. Using Gaussian quadrature over the $p$ Gauss nodes associated to the 0-forms $\kdifform{\tilde{\epsilon}_{k}}{0}$, $\tilde{\tau}^{j}$, the quadrature is exact for polynomials up to degree $(2p-1)$ and therefore exactly evaluates this integral. In this way, the matrix $\mathsf{N}$ becomes:
			\begin{equation}
				\mathsf{N}_{i,j} = \int_{-1}^{1} \tilde{l}_{i}^{g}(\tau)\tilde{l}_{j}^{g}(\tau)\sqrt{g}\,\ederiv\tau = \sum_{k=1}^{p}\tilde{l}^{g}_{i}(\tilde{\tau}^{k})\tilde{l}^{g}_{j}(\tilde{\tau}^{k})\,w_{k} = \sum_{k=1}^{p}\delta_{i,k}\delta_{j,k}\,w_{k} =\delta_{i,j}\,w_{j}\;. \label{eq::ode_matrix_N}
			\end{equation}

			Which means that $\mathsf{N}$ is a diagonal matrix and therefore $\mathsf{N}^{-1}$ is also a diagonal matrix whose elements are given by $\left(\mathsf{N}^{-1}\right)_{i,j}=\delta_{i,j}\,w_{j}^{-1}$. If we substitute this into \eqref{eq::TestCases_primal_primal_galerkin_integrator_proof_energy}, we get:
			 \begin{equation}
				\sum_{l=1,k=0}^{p} \incidencederivative{0}{1}_{k,l}y_{i}^{k} e_{l}(\tilde{\tau}^{j}) = \sum_{r=1}^{p}\delta_{j,r}\,w_{r}^{-1}\int_{\mathcal{T}}\left(\mathsf{J}^{-1}\,\nabla H\right)^{(0)}_{i}\wedge\star\kdifform{\tilde{\epsilon}_{r}}{0}\;, \label{eq::TestCases_primal_primal_galerkin_integrator_proof_energy_2}
			\end{equation}
			with $j=1,\dots,p$ and $i=1,\dots,2m$.
						
			Now, substituting \eqref{eq::TestCases_primal_primal_galerkin_integrator_proof_energy_2} in \eqref{eq::ode_galerking_proof_01} yields:
			\begin{equation}
				\sum_{j=1}^{p} \left(\sum_{r=1}^{p}\delta_{j,r}\,w_{r}^{-1}\int_{\mathcal{T}}\left(\mathsf{J}\,\nabla H\right)^{(0)}_{i}\wedge\star\kdifform{\tilde{\epsilon}_{r}}{0}\right)\, \kdifform{\tilde{\epsilon}_{j}}{0} =  \star\ederiv y_{h,i},\quad i=1,\dots,2m\,.
			\end{equation}
			Substituting this expression into the right hand side of \eqref{eq::ode_fundamental_theorem_calculus_diff_geom} yields:
			\begin{equation}
				H(\boldsymbol{y}_{h}(t_{f})) - H(\boldsymbol{y}_{h}(t_{0}))  = \sum_{i=1,k=1}^{2m}\int_{\mathcal{T}} \left[\sum_{j=1,r=1}^{p} \left(\delta_{jr}\,w_{r}^{-1}\int_{\mathcal{T}}\left(\mathsf{J}^{-1}\nabla H\right)^{(0)}_{i}\wedge\star\kdifform{\tilde{\epsilon}_{r}}{0}\right)\, \kdifform{\tilde{\epsilon}_{j}}{0}\right]\wedge \star\left(\nabla H\right)^{(0)}_{i}\,.
			\end{equation}
			Rearranging the terms and summing in $j$, the following expression is obtained:
			\begin{equation}
				H(\boldsymbol{y}_{h}(t_{f})) - H(\boldsymbol{y}_{h}(t_{0}))  = \sum_{i=1,k=1}^{2m}\sum_{m=1}^{p} w_{m}^{-1}\,\left(\int_{\mathcal{T}}\left(\nabla H\right)^{(0)}_{k}\wedge\star\kdifform{\tilde{\epsilon}_{m}}{0}\right)\mathsf{J}_{ik}\left(\int_{\mathcal{T}}\kdifform{\tilde{\epsilon}_{m}}{0}\wedge \star\left(\nabla H\right)^{(0)}_{i}\right)\;.
			\end{equation}
			The right hand side of this expression is equal to zero due to the skew-symmetry of matrix $\mathsf{J}$, this equality becomes:
			\[
				H(\boldsymbol{y}_{h}(t_{f})) - H(\boldsymbol{y}_{h}(t_{0})) = 0\;,
			\]
			and consequently the mimetic Galerkin integrator is exactly conserves the energy of the Hamiltonian system.
		\end{proof}
			
			\begin{definition}[\textbf{Symmetric integrator}]
				Take the system of ordinary differential equations in \eqref{eq::TestCases_ode}. The numerical method $\Phi_{\Delta t}:\boldsymbol{y}_{0}\in\mathbb{R}^{n}\mapsto\boldsymbol{y}_{1}\in\mathbb{R}^{n}$, which approximates this system of equations, maps points $\boldsymbol{y}_{0}$ in the instant $t_{0}$ to points $\boldsymbol{y}_{1}$ in the time instant $t_{1}=t_{0}+\Delta t$. The numerical method is \emph{symmetric} if $\Phi_{-\Delta t}\circ\Phi_{\Delta t} = I$.
			\end{definition}
			One can see that the canonical and Galerkin integrators are symmetric. For that recall \eqref{eq::TestCases_primal_dual_integrator_one_variable} and \eqref{eq::TestCases_primal_primal_galerkin_integrator_one_variable} and note that substituting $y_{h}(-t)$ in \eqref{eq::TestCases_primal_dual_integrator_one_variable} and \eqref{eq::TestCases_primal_primal_galerkin_integrator_one_variable} it solves both systems of equations for $-\Delta t$.

		\section{Numerical tests}\label{section::numerical_results}
				In order to illustrate the properties of the mimetic integrators just presented  we apply them to the solution of four test cases. We have chosen the circular motion problem (a Hamiltonian linear problem), the Lotka-Volterra problem (a non-Hamiltonian, non-linear polynomial problem), the pendulum problem (a Hamiltonian, non-linear and non-polynomial problem) and finally the two-body Kepler problem (a higher order Hamiltonian problem).

			\subsection{Circular motion problem}
				The mimetic integrators were applied to a circular motion problem, a linear problem,
			\begin{equation}
				\left\{
					\begin{array}{l}
						\frac{\ederiv p}{\ederiv t} = q\\
						\frac{\ederiv q}{\ederiv t} = -p
					\end{array}
				\right.\;, \label{eq::circle}
			\end{equation}
			whose Hamiltonian is given by $H(p,q)=\frac{1}{2}\left(q^{2}+p^{2}\right)$.

			In \figref{fig:time_circle_phase} one can see the numerical solution of the circular motion problem, \eqref{eq::circle}, using the two mimetic integrators, with polynomial order in time $p_{t}=2$, a Runge-Kutta of 4th order and the simple explicit Euler. From this figure one can see that both mimetic integrators are able to perform a long time simulation, as they retain the behavior of the analytical solution. On the other hand, as expected, the explicit Euler scheme rapidly spirals outwards and the Runge-Kutta of 4th order quickly spirals inwards reaching the stable point $(0,0)$. This is an important example that stresses the relevance of symplectic and energy preserving integrators. Although the Runge-Kutta integrator has the same convergence rate of the mimetic integrators, see \figref{fig:time_circle_error} (right), it completely fails to capture the circular motion of this system of equations. Both mimetic integrators exactly (up to machine precision) preserve the constant of motion $R^{2} = q^{2} + p^{2}$. In the case of  the mimetic canonical integrator this happens because this quantity is a quadratic expression of the variables $q$ and $p$, and this integrator is symplectic. For the mimetic Galerkin integrator this happens because this quantity differs from the Hamiltonian of the system by only a multiplicative constant. An interesting point to note is that, in this case, due to the linearity of the system, both mimetic integrators yield the same result, up to machine accuracy, as can be seen in \figref{fig:time_circle_mgi2_mci2}. Another relevant point to note is that, although the trajectory computed from both mimetic integrators is of order $p_{t}=2$ they show a convergence rate of $2p_{t}$.

			\begin{figure}[ht]
				\centering
					\subfigure{
				\includegraphics[width=0.35\textwidth]{./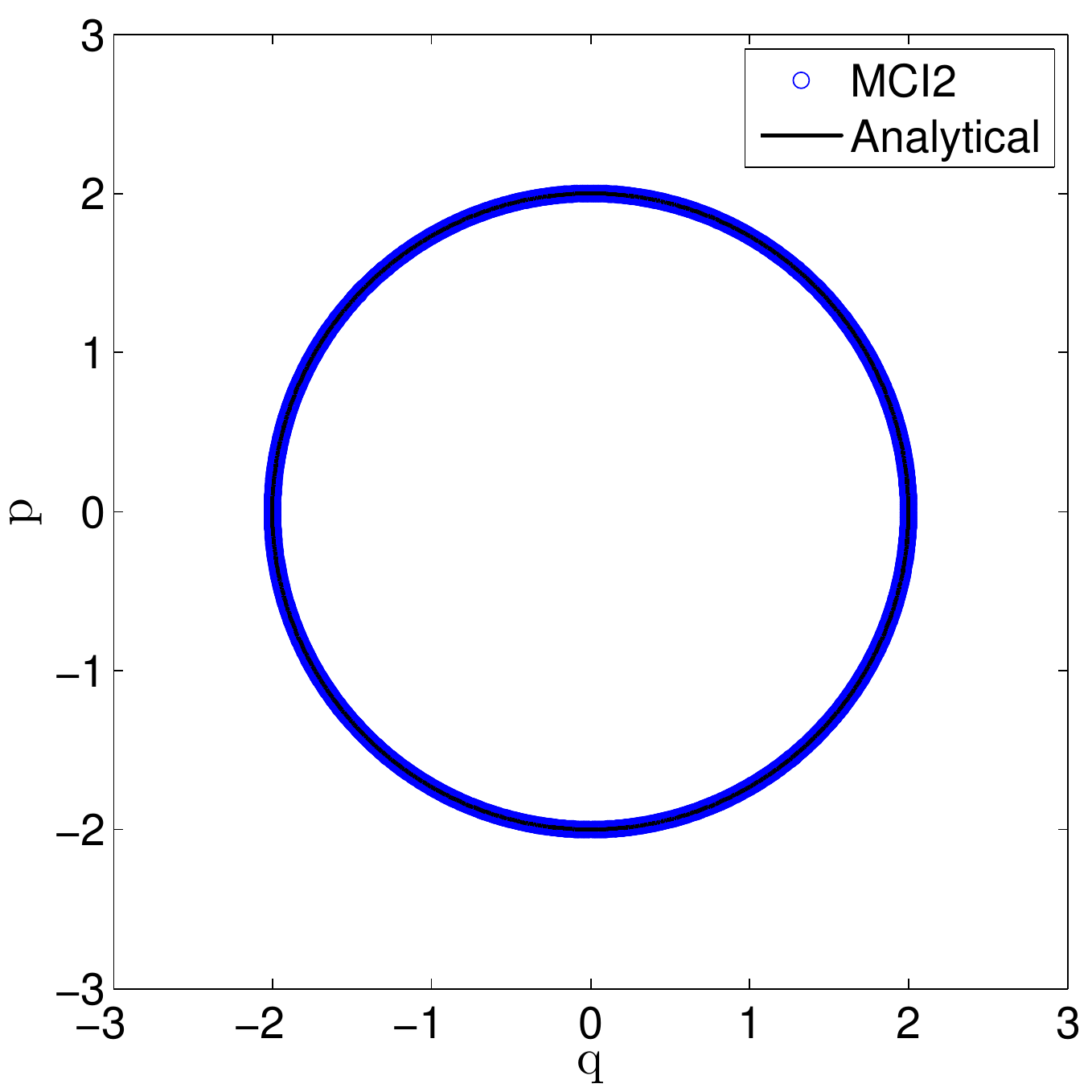}
						\label{fig:time_circle_phase_mci2}
					}
					\subfigure{
				\includegraphics[width=0.35\textwidth]{./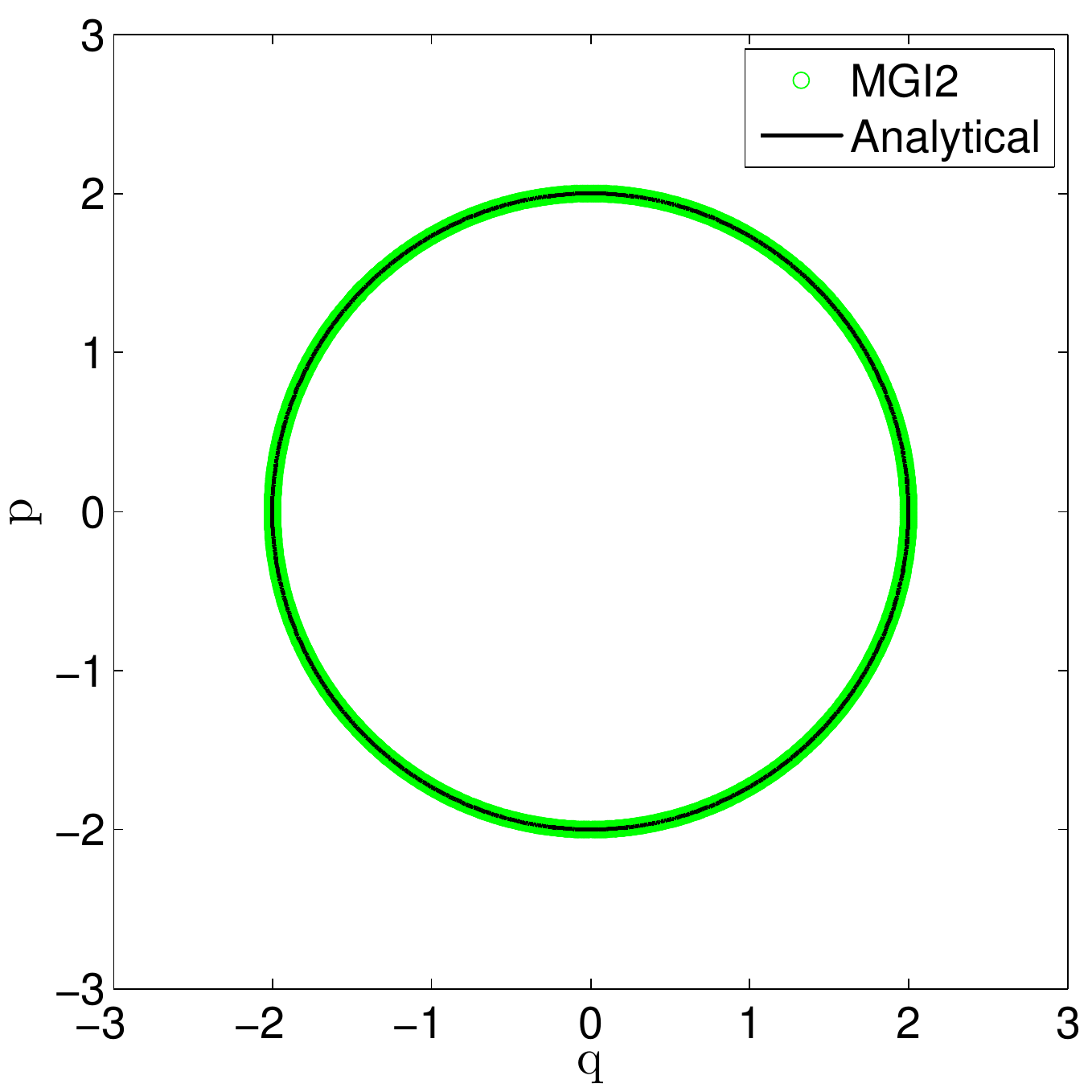}
						\label{fig:time_circle_phase_mgi2}
					}
					\subfigure{
				\includegraphics[width=0.35\textwidth]{./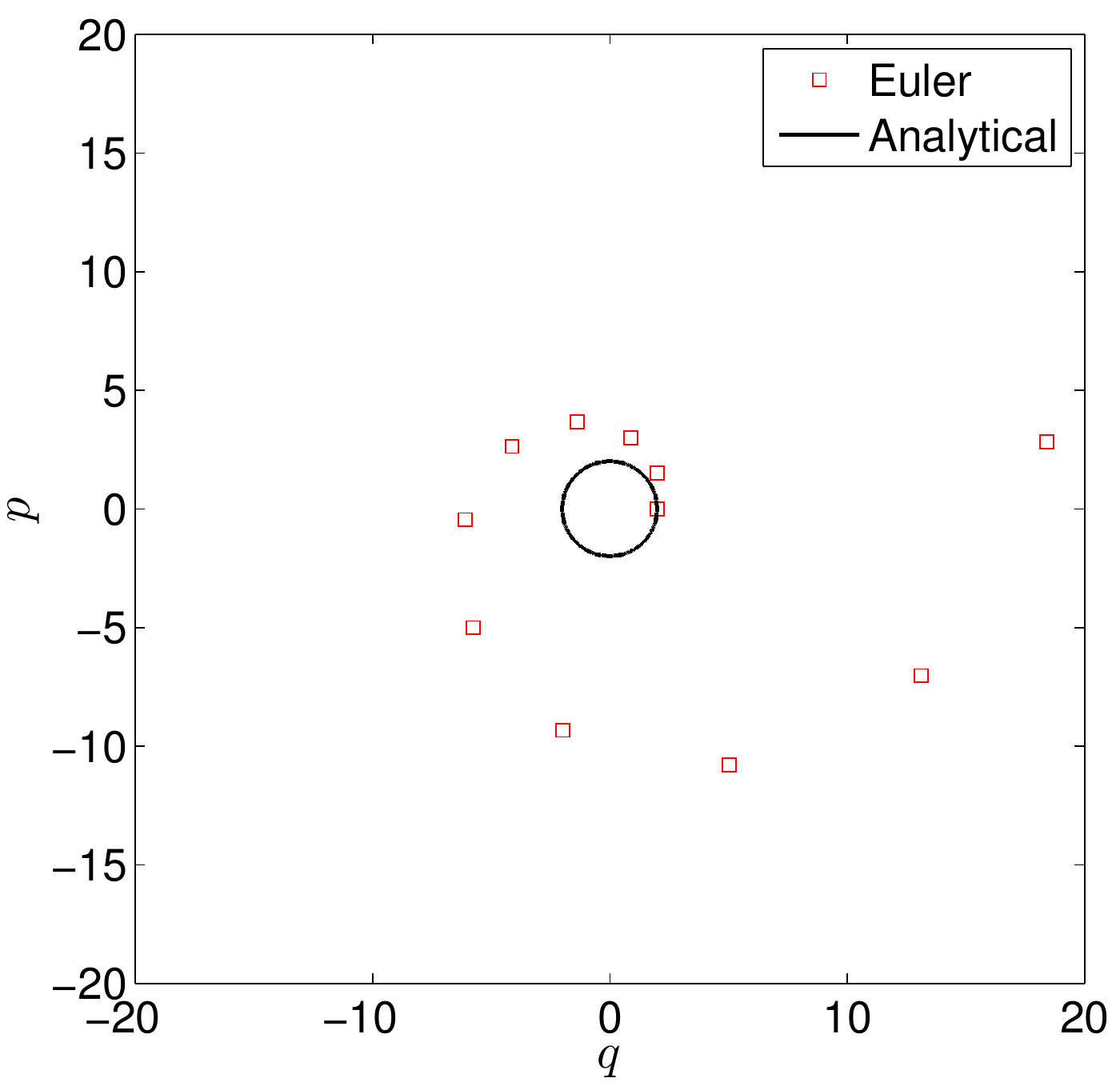}
						\label{fig:time_circle_phase_euler}
					}
					\subfigure{
				\includegraphics[width=0.35\textwidth]{./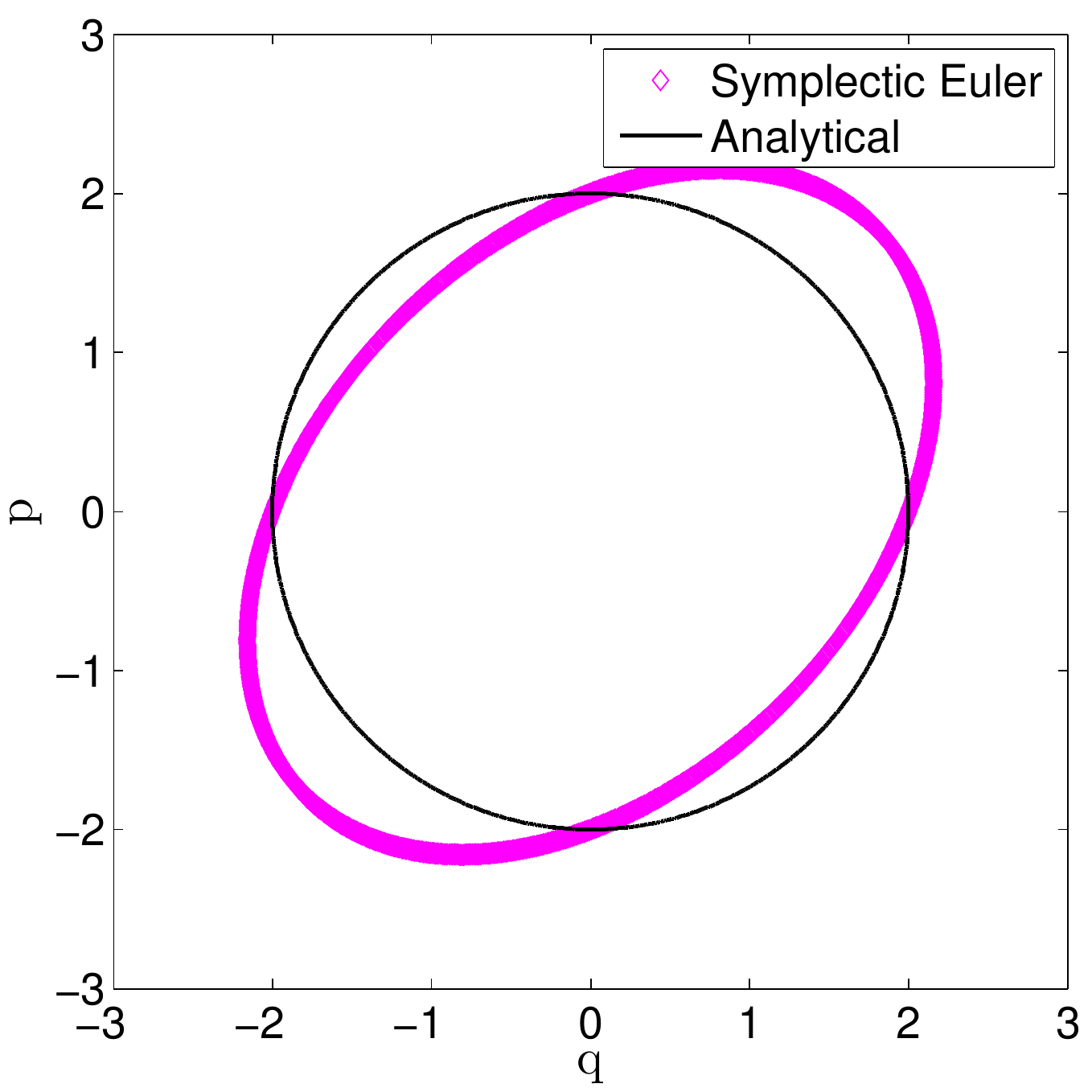}
						\label{fig:time_circle_phase_SE}
					}
					\subfigure{
				\includegraphics[width=0.35\textwidth]{./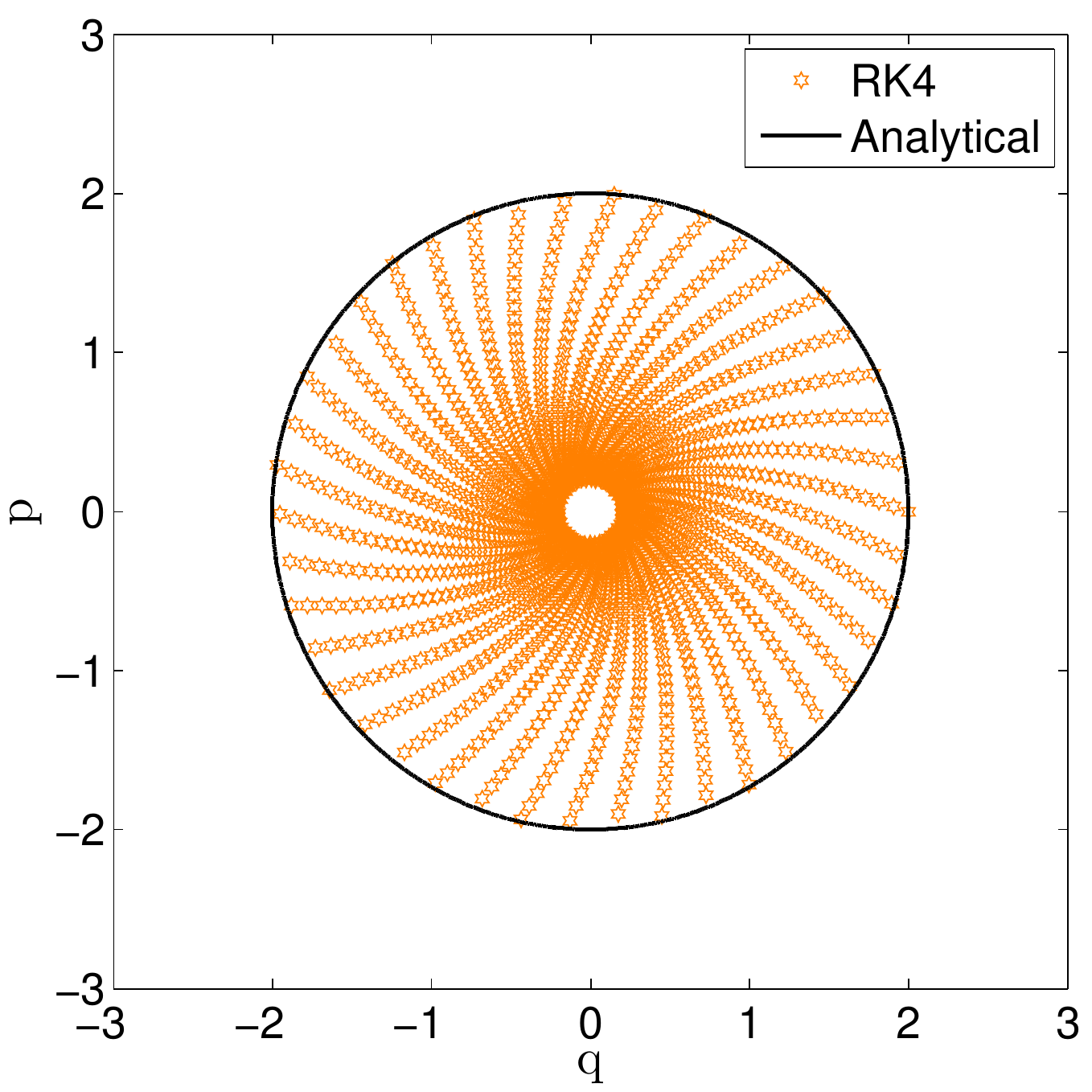}
						\label{fig:time_circle_phase_RK4}
					}
					\caption{Phase-space numerical (in color) and analytical (in black) solutions of circular motion, \eqref{eq::circle}, with initial condition $y_{1}=2$, $y_{2}=0$ and time step $\Delta t = 1s$. Top left: mimetic canonical integrator with poynomial order in time $p_{t}=2$ (MCI2). Top right: mimetic Galerkin integrator with polynomial order in time $p_{t}=2$ (MGI2). Center left: explicit Euler scheme, which clearly diverges. Center right: symplectic Euler scheme, which clearly diverges. Bottom: explicit Runge-Kutta of 4th order, which also clearly diverges.}
					\label{fig:time_circle_phase}
				\end{figure}
				
				\begin{figure}[ht]
					\centering
					\subfigure{
								\includegraphics[width=0.44\textwidth]{./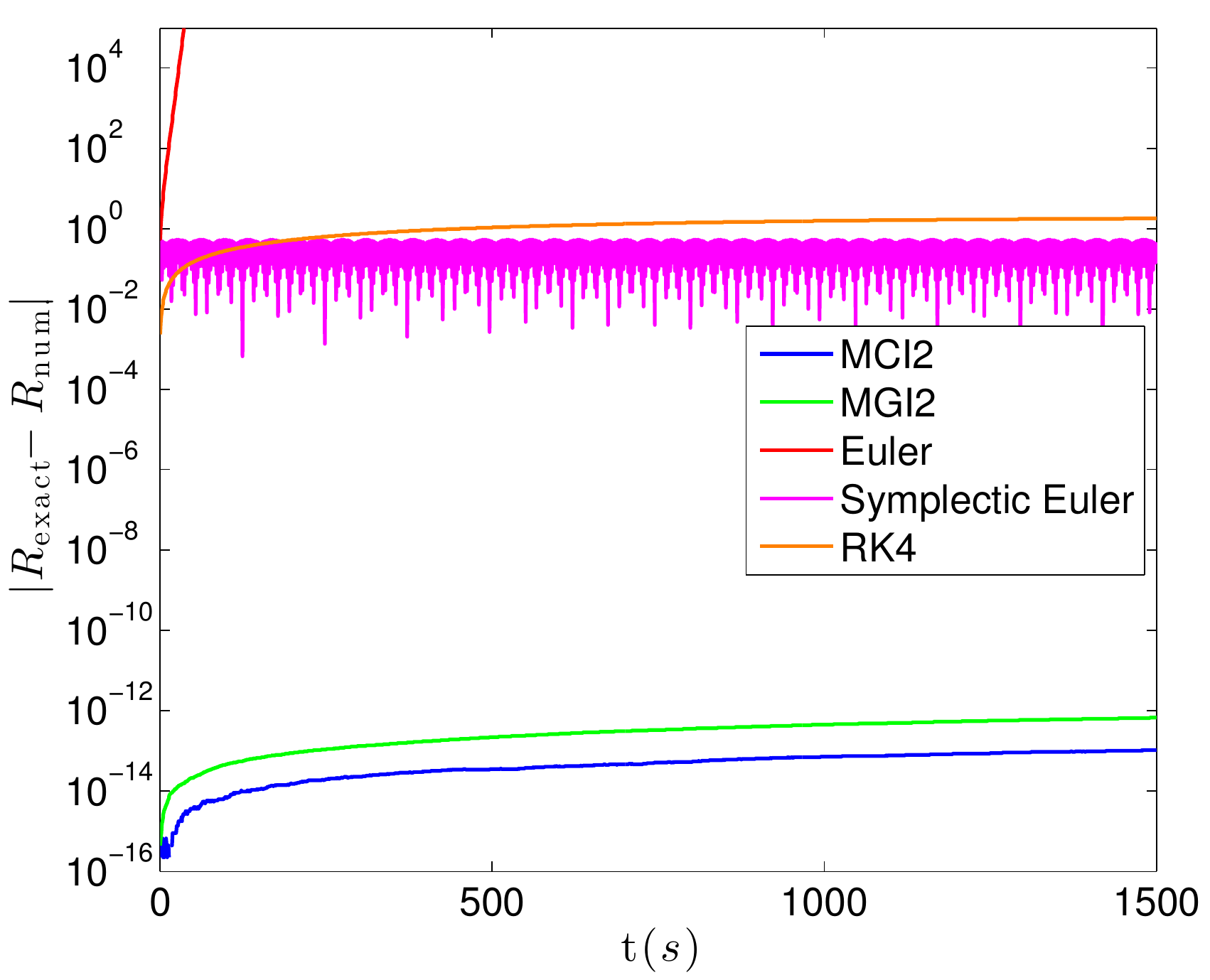}
						\label{fig:time_circle_error_time}
					}
					\subfigure{
								\includegraphics[width=0.44\textwidth]{./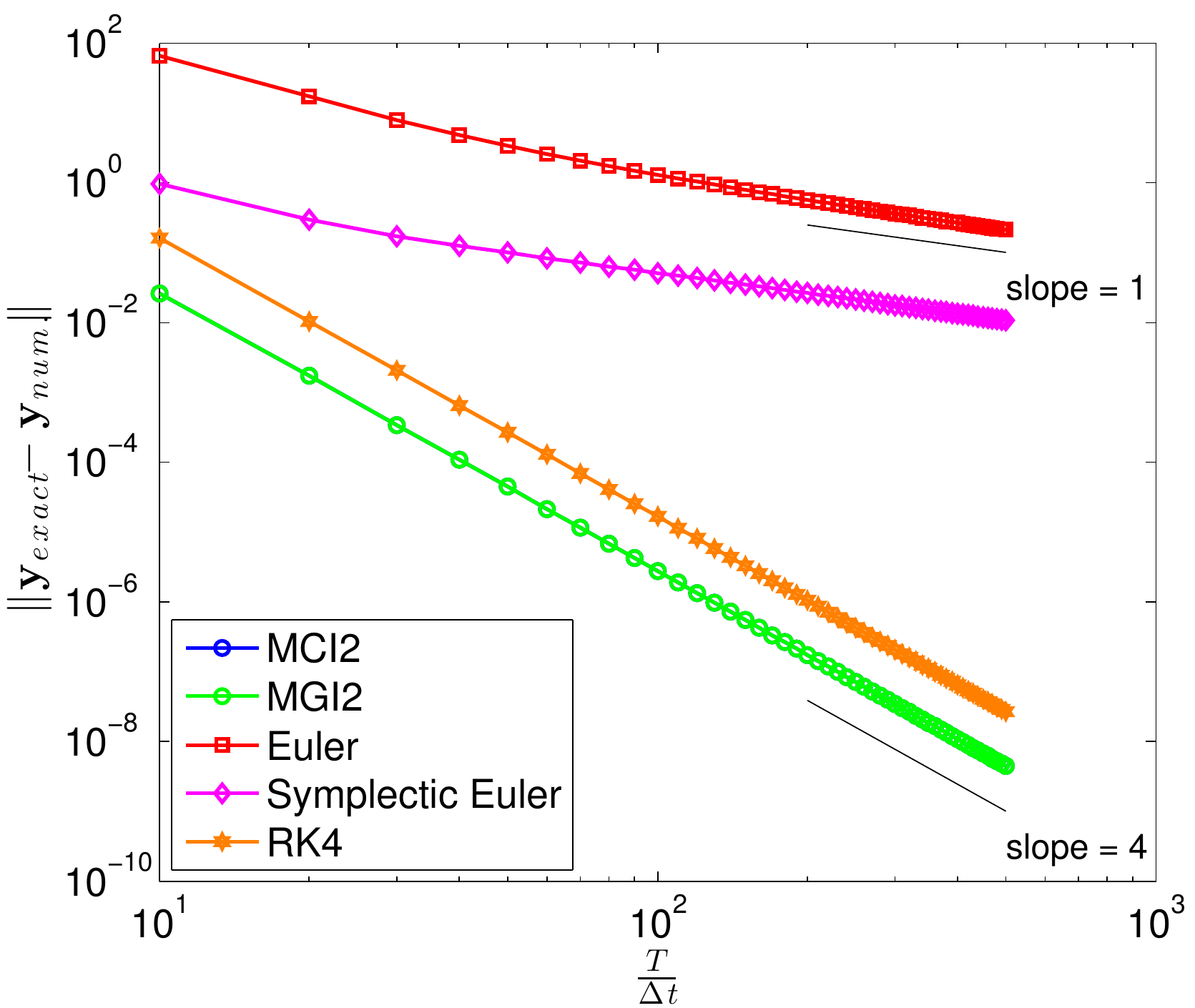}

						\label{fig:time_circle_error_convergence}
					}
					\caption{Left: Error in time of  $R=(y_{1}^{2} + y_{2}^{2})^{\frac{1}{2}}$ for the numerical solution of the circular motion, \eqref{eq::circle}, for $\Delta t = 1s$, with the mimetic canonical integrator of polynomial order in time $p_{t}=2$ (MCI2), mimetic Galerkin integrator of polynomial order in time $p_{t}=2$ (MGI2), Runge-Kutta of 4th order, explicit Euler scheme and symplectic Euler method. Since the quantity $R$ is a function (square root) of a quadratic term, the mimetic canonical integrator exactly (up to machine accuracy) preserves this quantity along the trajectory, as can be seen. The same can be seen for the mimetic Galerkin integrator, since $\frac{1}{2}\left(y_{1}^{2} + y_{2}^{2}\right)$ is also the Hamiltonian of the system. Right: Convergence of the error of the path in phase space as a function of the time step ($\frac{T}{\Delta t}$) for mimetic canonical integrator  of polynomial order in time $p_{t}=2$ (MCI2), mimetic Galerkin integrator of polynomial order in time $p_{t}=2$ (MGI2), Runge-Kutta of 4th order,  explicit Euler and symplectic Euler. It is possible to observe the 4th order rate of convergence of the mimetic integrators.}
					\label{fig:time_circle_error}
					\end{figure}

				\begin{figure}[ht]
					\centering
					\includegraphics[width=0.45\textwidth]{./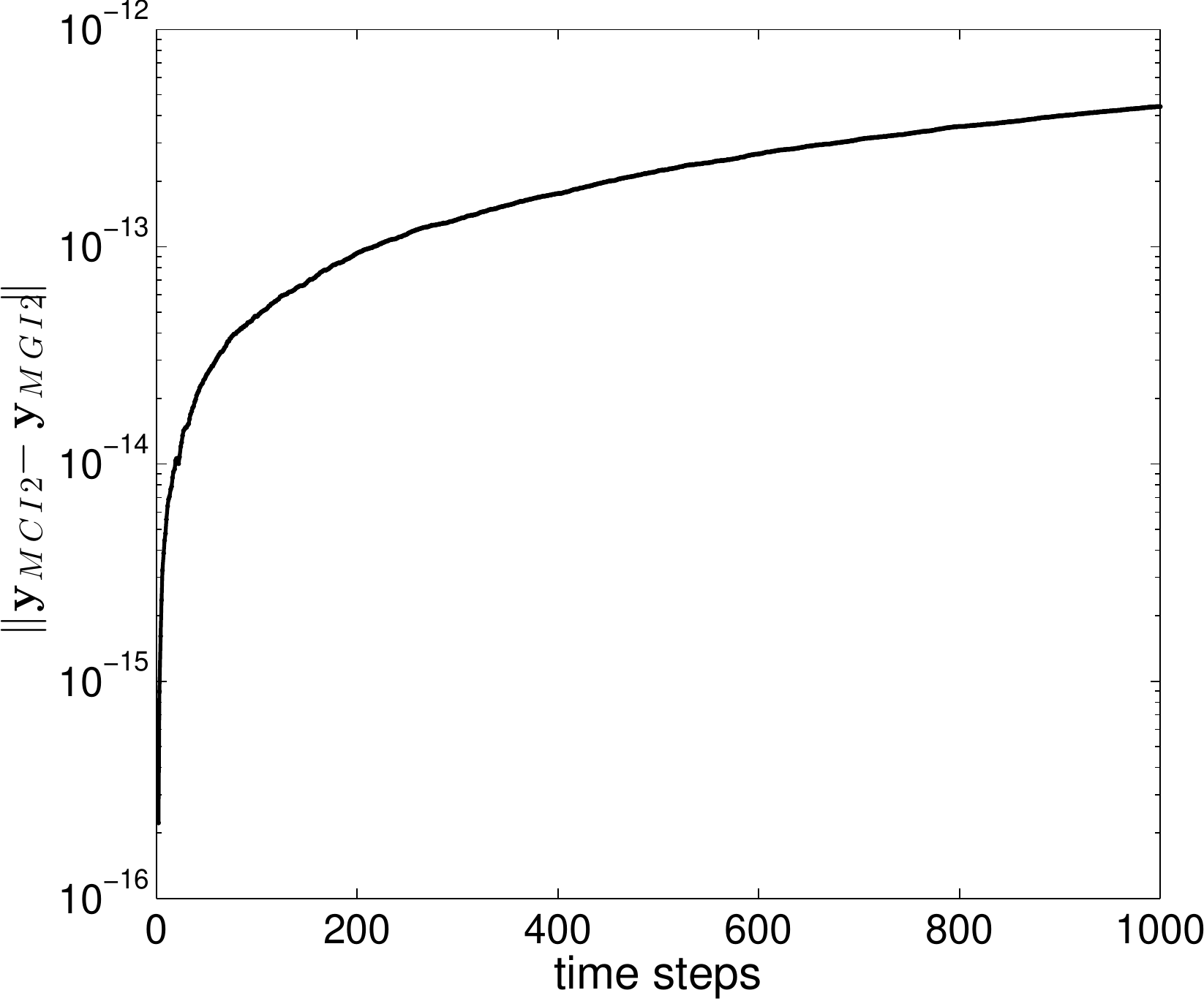}
					\caption{Difference between the numerical solution of the circular motion, \eqref{eq::circle}, for $\Delta t = 1s$, obtained with the mimetic canonical integrator of polynomial order in time $p_{t}=2$ (MCI2) and the mimetic Galerkin integrator of polynomial order in time $p_{t}=2$ (MGI2). For linear systems of ordinary differential equations, both solutions are equal up to machine accuracy. In this particular case, both integrators give the same result. This is due to the fact that the in system of equations \eqref{eq::circle} is linear.}
					\label{fig:time_circle_mgi2_mci2}
				\end{figure}

				\FloatBarrier

			\subsection{Lotka-Volterra problem}
				The Lotka-Volterra problem is governed by the following equations,
			\begin{equation}
				\left\{
					\begin{array}{l}
						\frac{\ederiv y_{1}}{\ederiv t} = y_{1}(y_{2}-2)\\
						\frac{\ederiv y_{2}}{\ederiv t} = y_{2}(1-y_{1})
					\end{array}
				\right. \;, \label{eq::lotka_volterra}
			\end{equation}
			a non-linear polynomial problem, which is not a Hamiltonian system but conserves the quantity $V(y_{1},y_{2})=-y_{1}+\log y_{1}-y_{2}+2\log y_{2}$ along its trajectories.

			The Lotka-Volterra system of equations, \eqref{eq::lotka_volterra}, is an interesting problem to analyze since it is neither Hamiltonian (it is a Poisson system) nor has a conserved quadratic quantity. Here one can see that the trajectories obtained with both mimetic integrators and the Runge-Kutta of 4th order are very similar, see \figref{fig:time_lotkaVolterra_phase}, and the error as a function of time of the conserved quantity $V=-y_{1}+\log y_{1}-y_{2}+2\log y_{2}$, see \figref{fig:time_lotkaVolterra_error} (left). A similar behaviour is seen for the convergence plots as a function of time step size, see \figref{fig:time_lotkaVolterra_error} (right). In this case the advantage of using the mimetic integrators when compared to Runge-Kutta of 4th order does not show itself.

			\begin{figure}[ht]
				\centering
					\subfigure{
				\includegraphics[width=0.35\textwidth]{./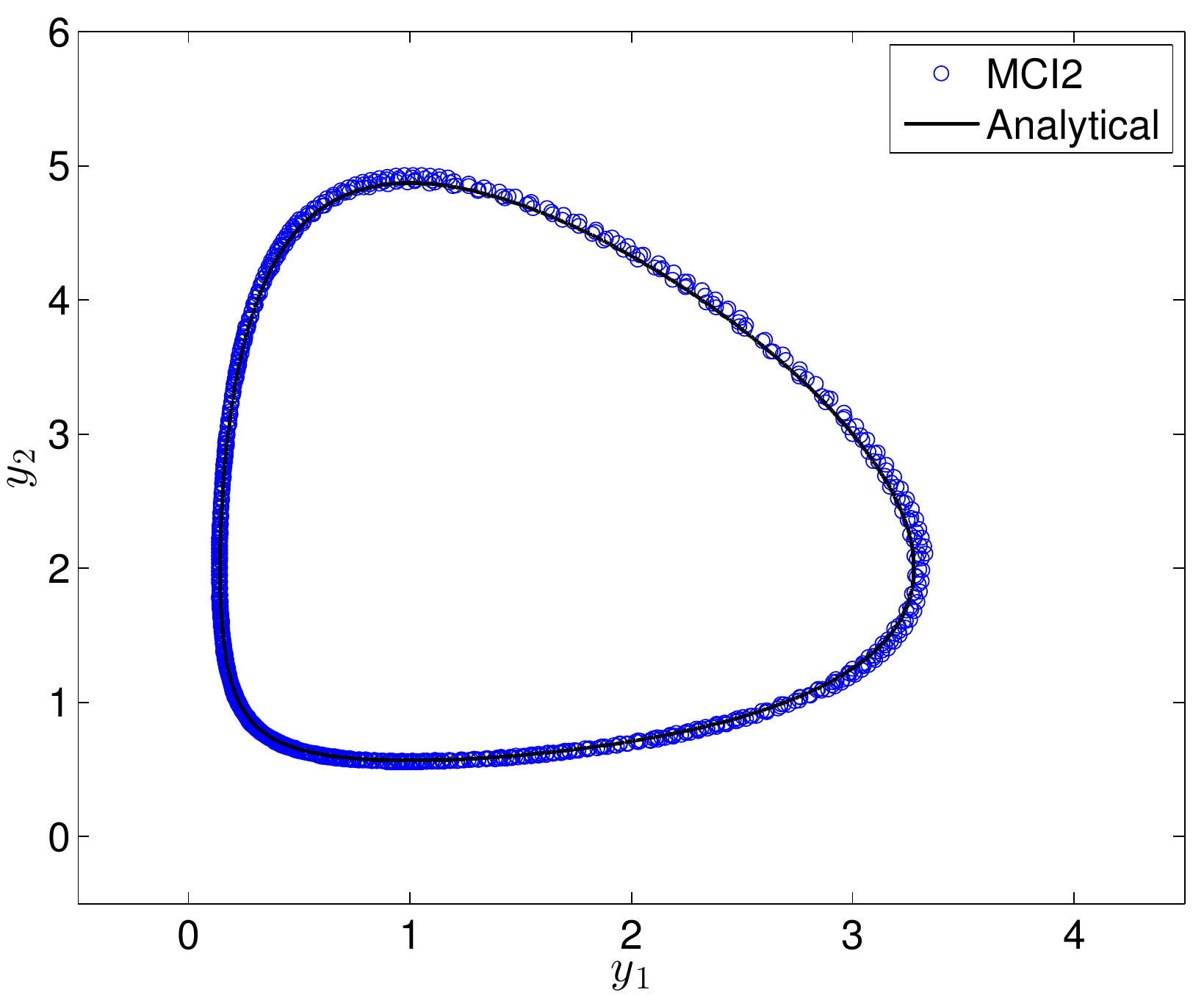}
						\label{fig:time_lotkaVolterra_phase_mci2}
					}
					\subfigure{
				\includegraphics[width=0.35\textwidth]{./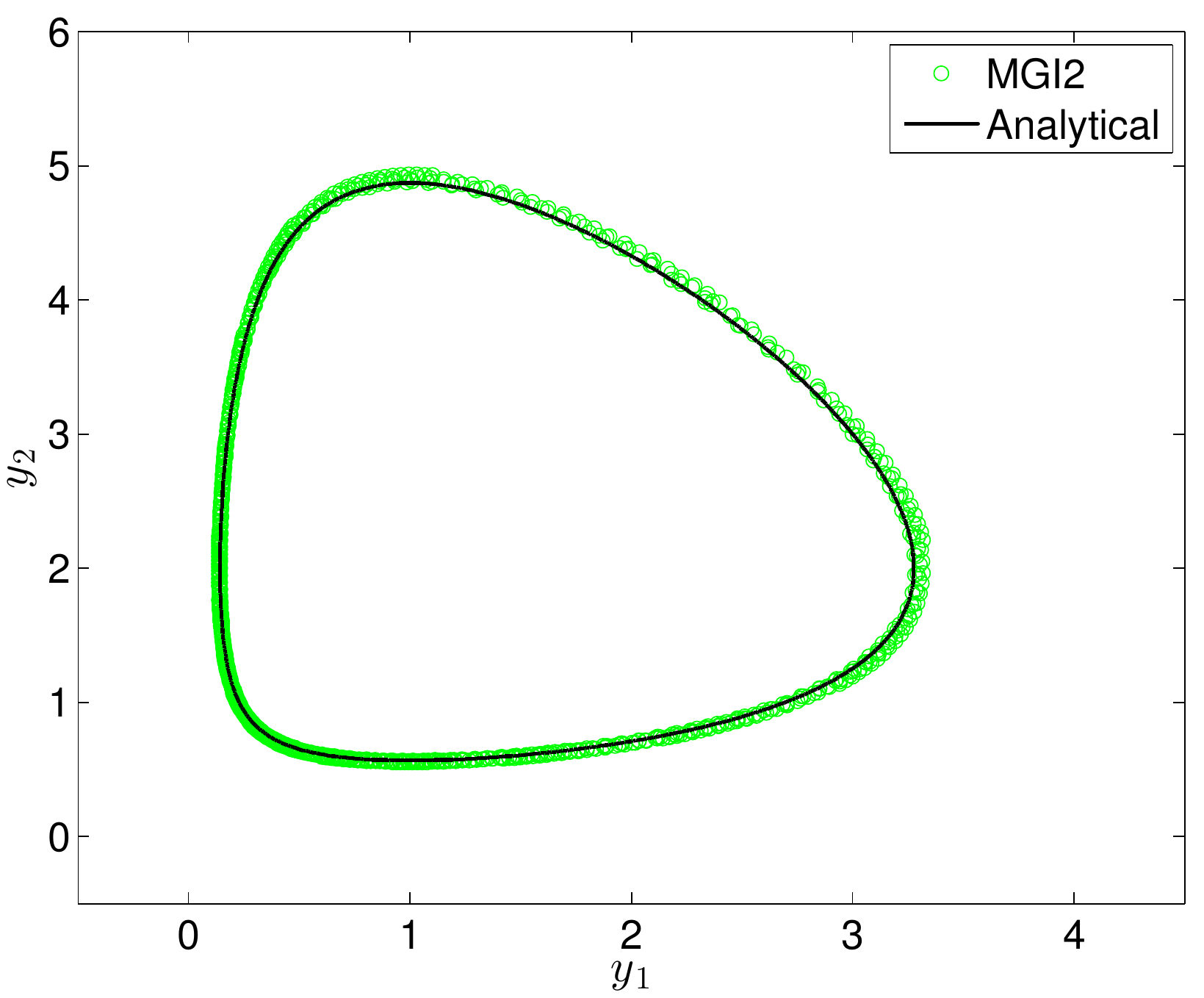}
						\label{fig:time_lotkaVolterra_phase_mgi2}
					}
					\subfigure{
				\includegraphics[width=0.35\textwidth]{./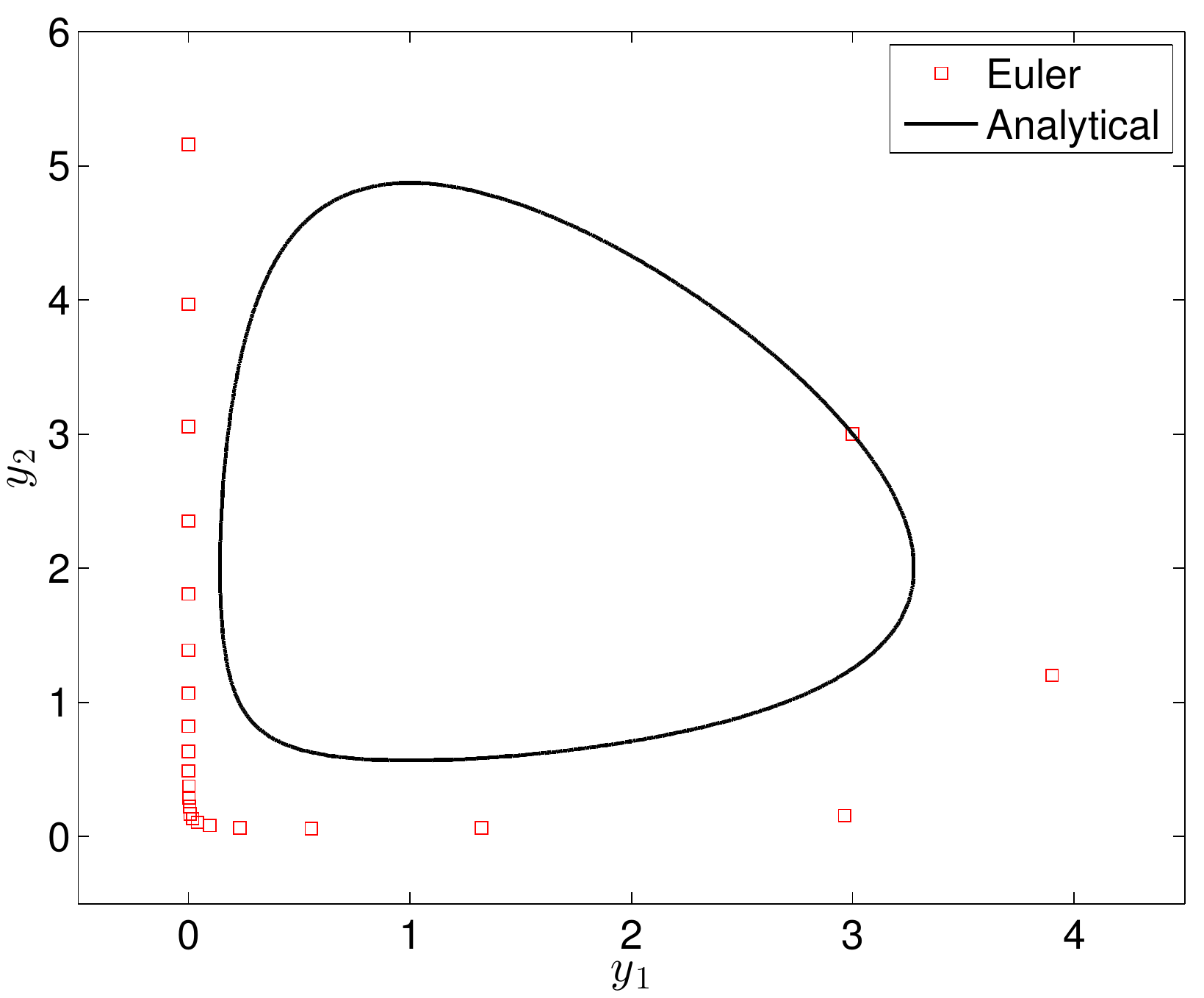}
						\label{fig:time_lotkaVolterra_phase_euler}
					}
					\subfigure{
				\includegraphics[width=0.35\textwidth]{./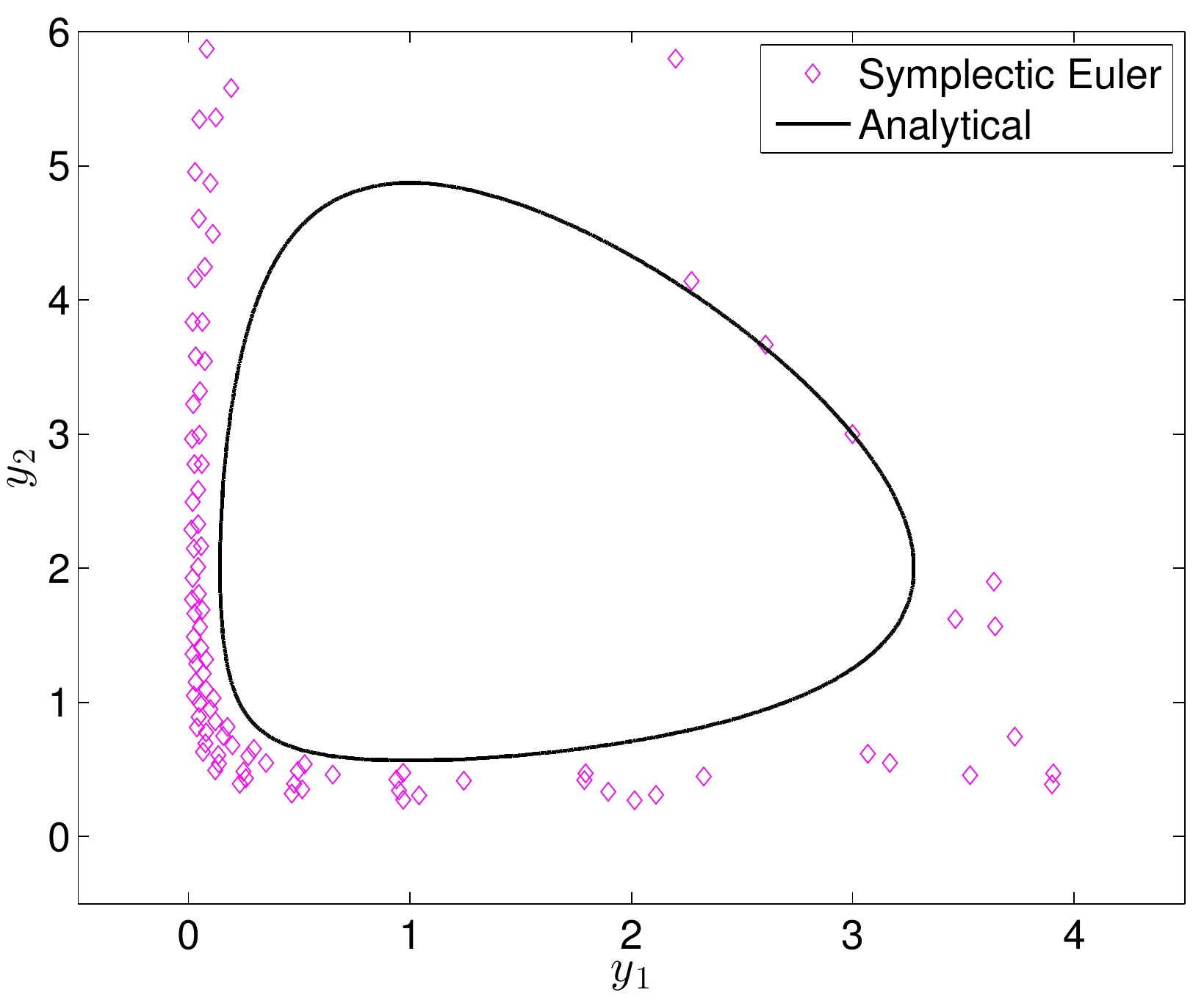}
						\label{fig:time_lotkaVolterra_phase_symplectic_euler}
					}
					\subfigure{
				\includegraphics[width=0.35\textwidth]{./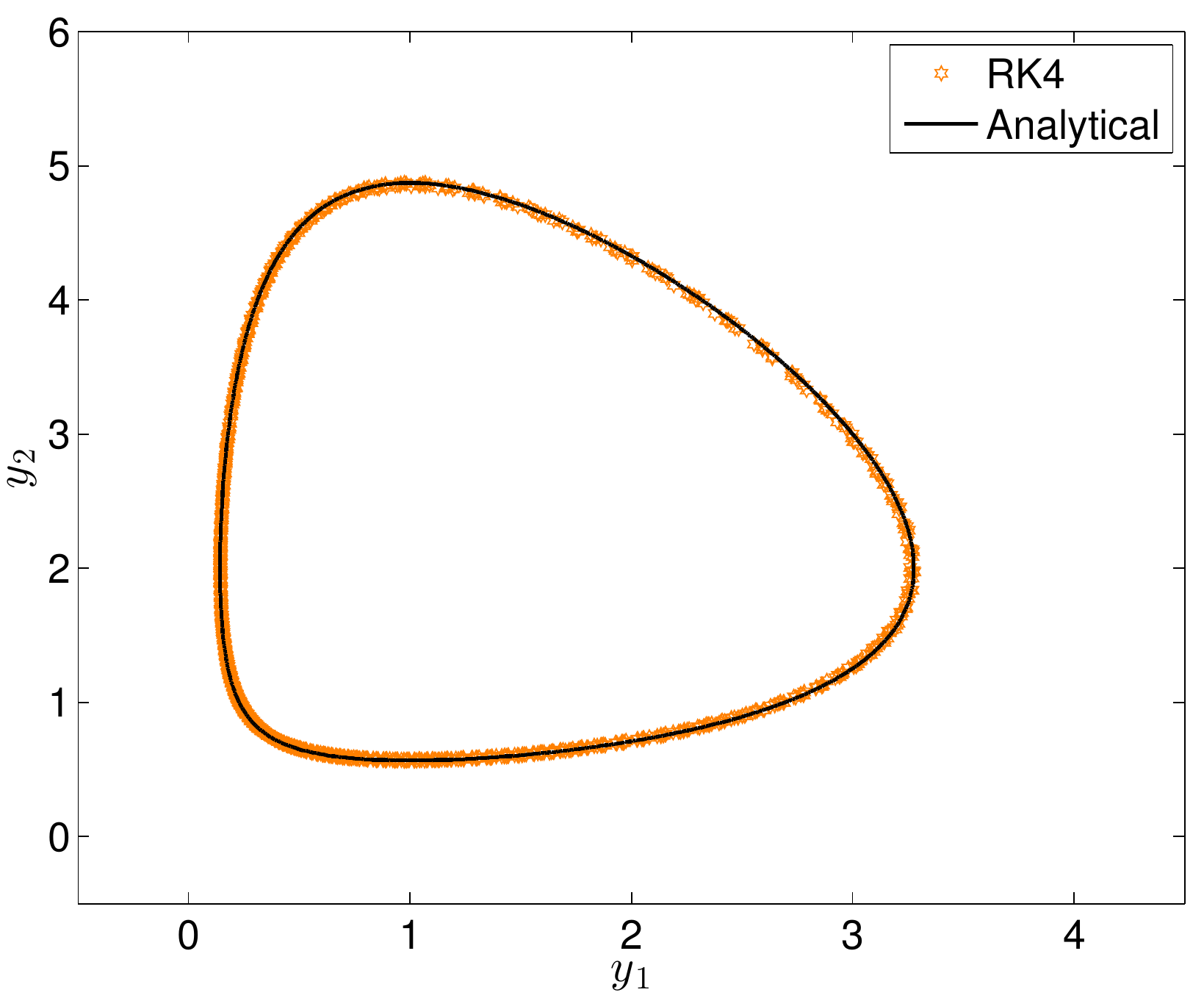}
						\label{fig:time_lotkaVolterra_phase_RK4}
					}
					\caption{Phase-space numerical (in color) and analytical (in black) solutions of Lotka-Volterra system, \eqref{eq::lotka_volterra} with initial condition $y_{1}=3$, $y_{2}=3$ and time step $\Delta t = 0.3s$. Top left: mimetic canonical integrator with polynomial order in time $p_{t}=2$ (MCI2). Top right: mimetic Galerkin integrator with polynomial order in time $p_{t}=2$ (MGI2). Center left: explicit Euler scheme, where one can clearly see the fast divergence of the solution. Center right: symplectic Euler scheme, where one can also see the divergence of the solution. Bottom: explicit Runge-Kutta of 4th order.}
					\label{fig:time_lotkaVolterra_phase}
				\end{figure}

				\begin{figure}[ht]
					\centering
					\subfigure{

								\includegraphics[width=0.44\textwidth]{./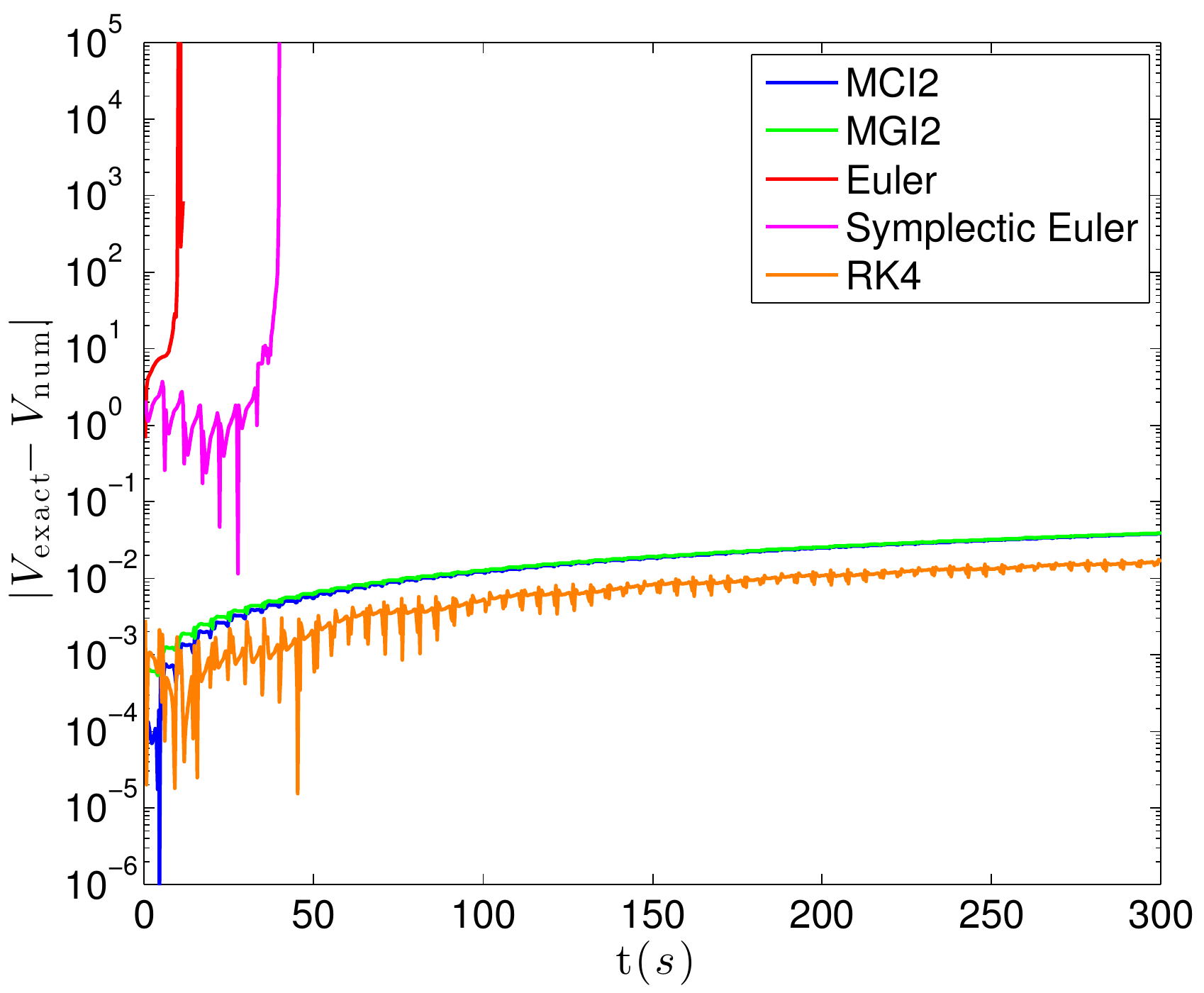}
						\label{fig:time_lotkaVolterra_error_time}
					}
					\subfigure{
								\includegraphics[width=0.44\textwidth]{./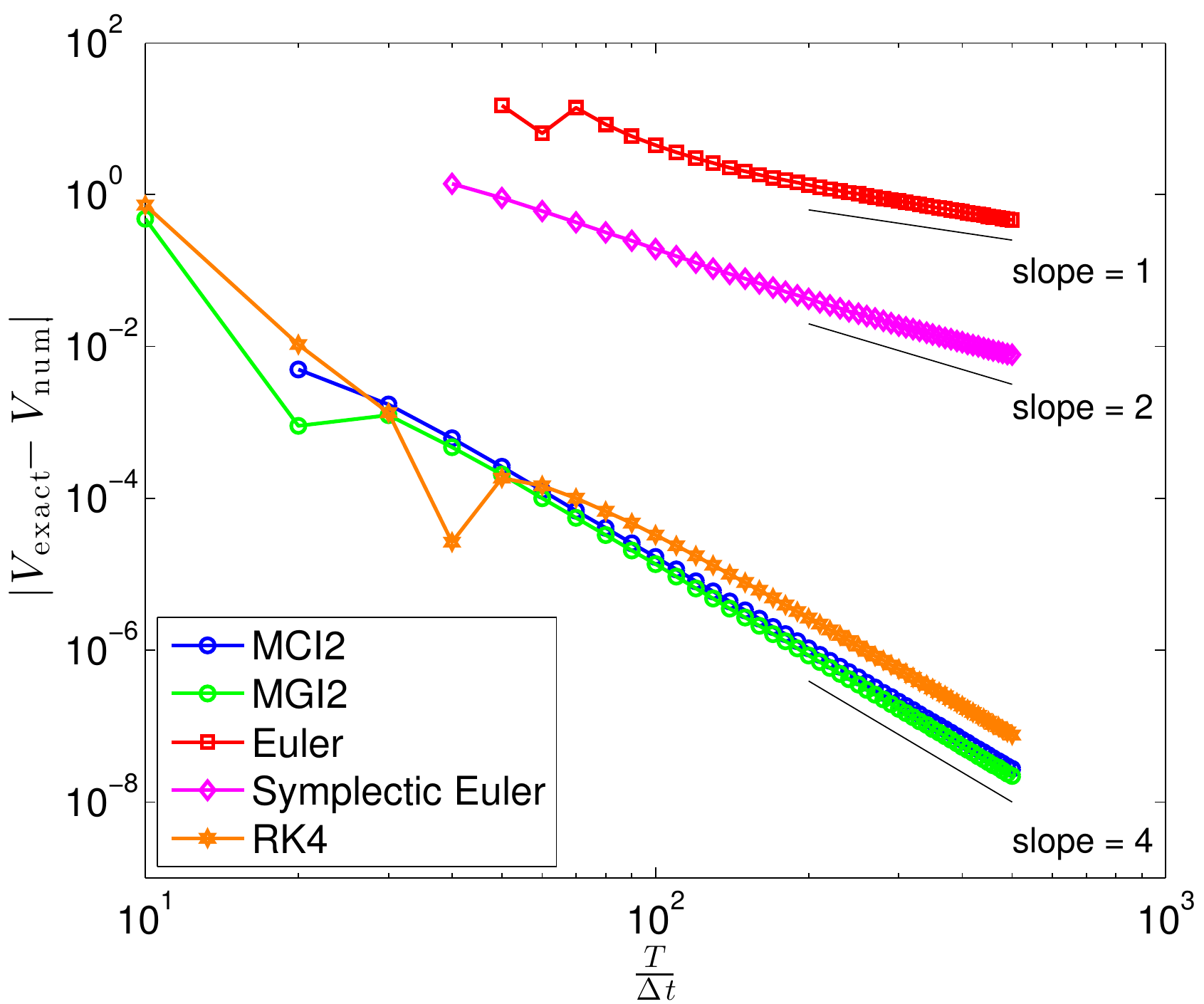}
						\label{fig:time_lotkaVolterra_error_convergence}
					}
					\caption{Left: Error in time of the conserved quantity  $V = -y_{1}+\log y_{1} - y_{2} + 2\log y_{2}$ for the numerical solution of the Lotka-Volterra system of equations, \eqref{eq::lotka_volterra}, for $\Delta t = 0.3s$, with the mimetic canonical integrator of polynomial order in time $p_{t}=2$ (MCI2), mimetic Galerkin integrator of polynomial order in time $p_{t}=2$ (MGI2), Runge-Kutta of 4th order, explicit Euler scheme and symplectic Euler method. Since this quantity $V$ is not a quadratic term, the mimetic canonical integrator is unable to exactly preserve this quantity along the trajectory, as can be seen by the increase in the error with time. The same can be seen for the mimetic Galerkin projection integrator, since $V$ is not the Hamiltonian of the system. Right: Convergence of the error of the path in phase space as a function of $\frac{T}{\Delta t}$ for mimetic canonical integrator  of polynomial order in time $p_{t}=2$ (MCI2), mimetic Galerkin integrator of polynomial order in time $p_{t}=2$ (MGI2), Runge-Kutta of 4th order, explicit Euler and symplectic Euler. It is possible to observe the 4th order rate of convergence of the mimetic integrators.}
					\label{fig:time_lotkaVolterra_error}
					\end{figure}

					\FloatBarrier

			\subsection{Pendulum problem}
				The pendulum problem is a non-linear, non-polynomial problem,
			\begin{equation}
				\left\{
					\begin{array}{l}
						\frac{\ederiv p}{\ederiv t} = -10\sin q \\
						\frac{\ederiv q}{\ederiv t} = p
					\end{array}
				\right. \;, \label{eq::pendulum}
			\end{equation}
			 with Hamiltonian $H(p,q)=\frac{p}{2}-10\cos q$. 
			 
			 The pendulum problem, \eqref{eq::pendulum} is a Hamiltonian system and here both mimetic integrators show again a clear advantage when compared to the Runge-Kutta integrator. As can be seen in \figref{fig:time_pendulum_phase}, both mimetic integrators are capable of keeping the numerical trajectories close to the analytical solution, therefore retaining the qualitative behavior of the problem, whilst the Runge-Kutta integrator shows an inward spiraling behavior. This numerical solution is characteristic of a damped pendulum problem (with friction) but the problem being solved is a frictionless pendulum. This issue is relevant since the numerical method interferes with the physics of the problem being solved.

			 \begin{figure}[ht]
				\centering
					\subfigure{
				\includegraphics[width=0.35\textwidth]{./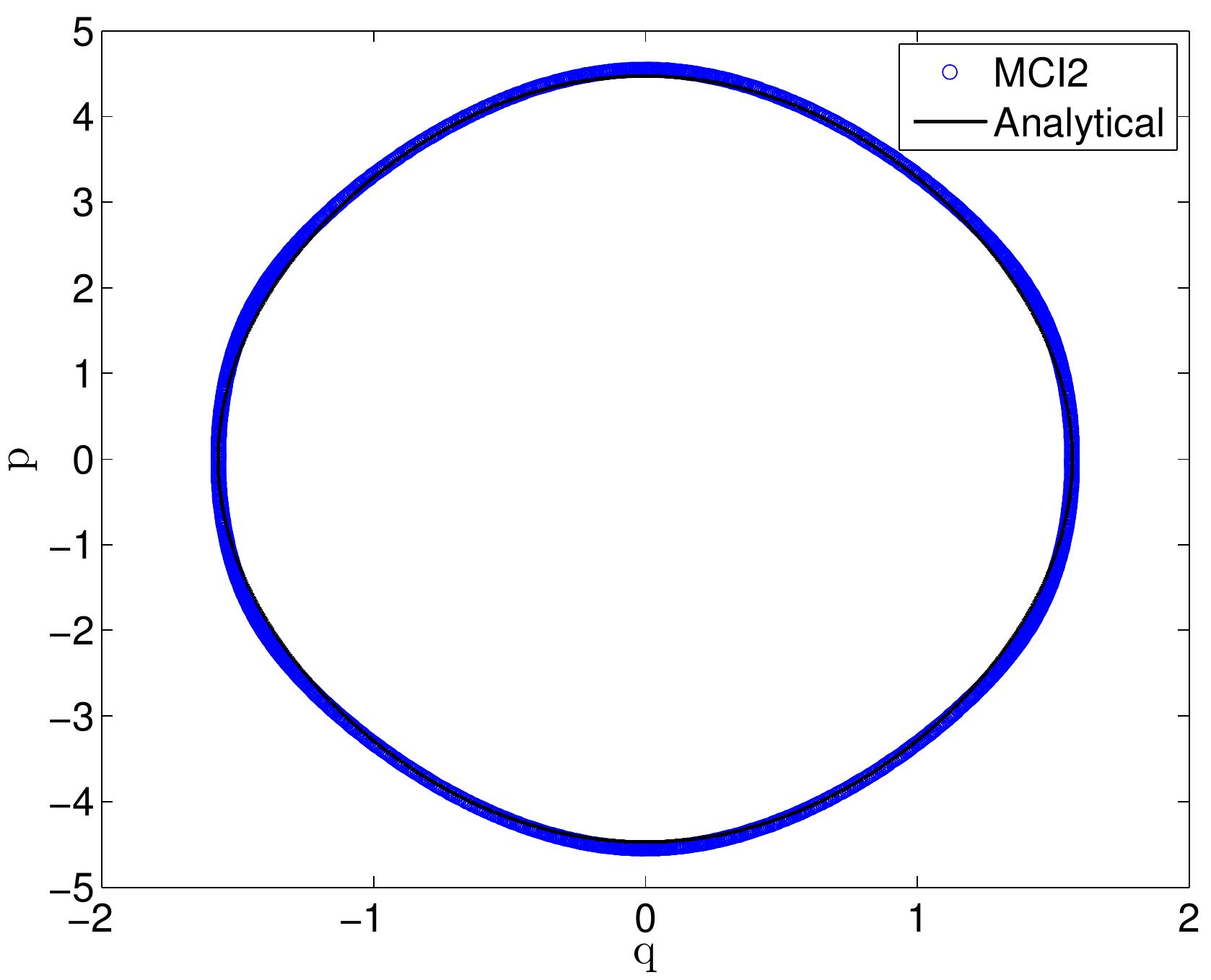}
						\label{fig:time_pendulum_phase_mci2}
					}
					\subfigure{
				\includegraphics[width=0.35\textwidth]{./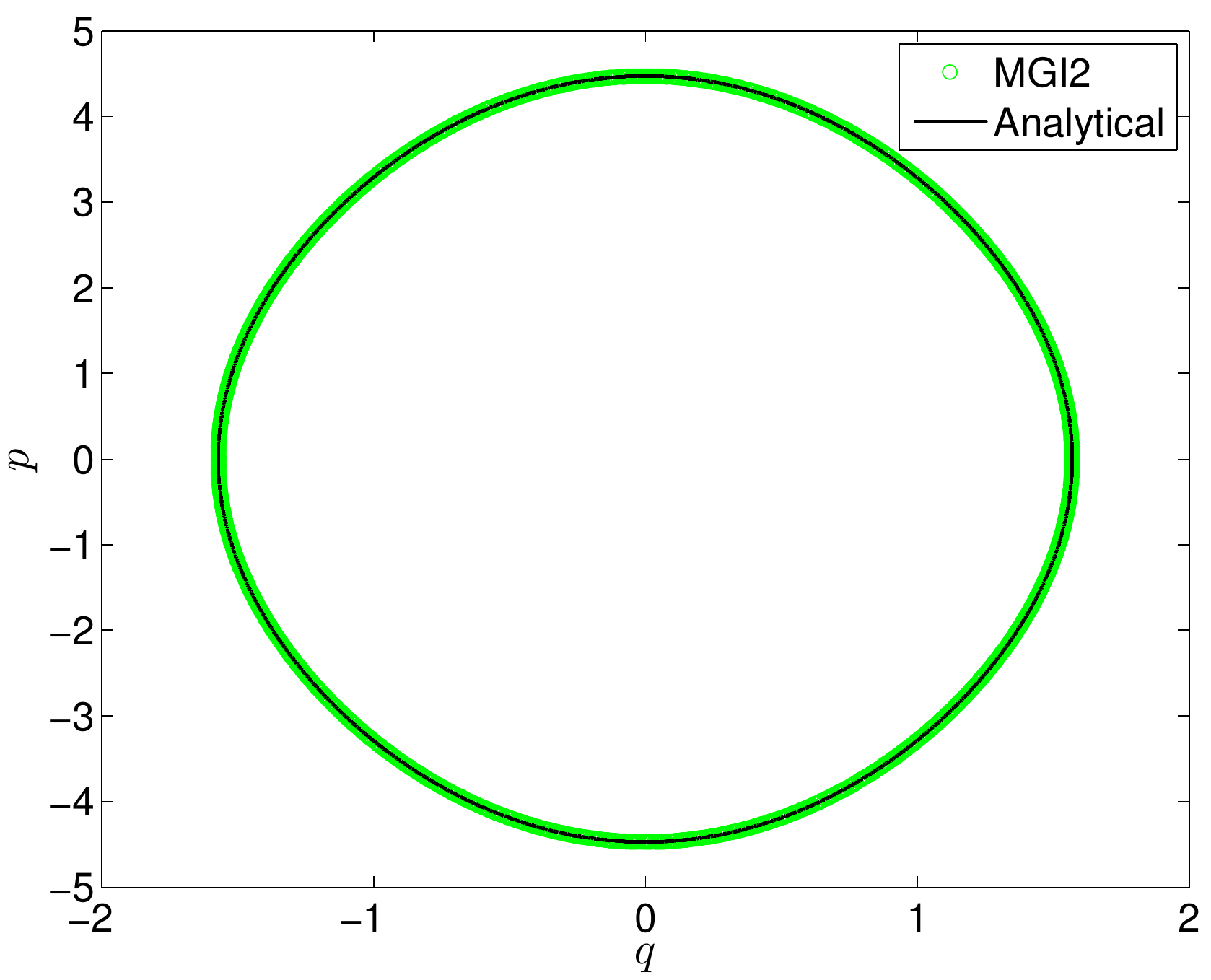}
						\label{fig:time_pendulum_phase_mgi2}
					}
					\subfigure{
				\includegraphics[width=0.35\textwidth]{./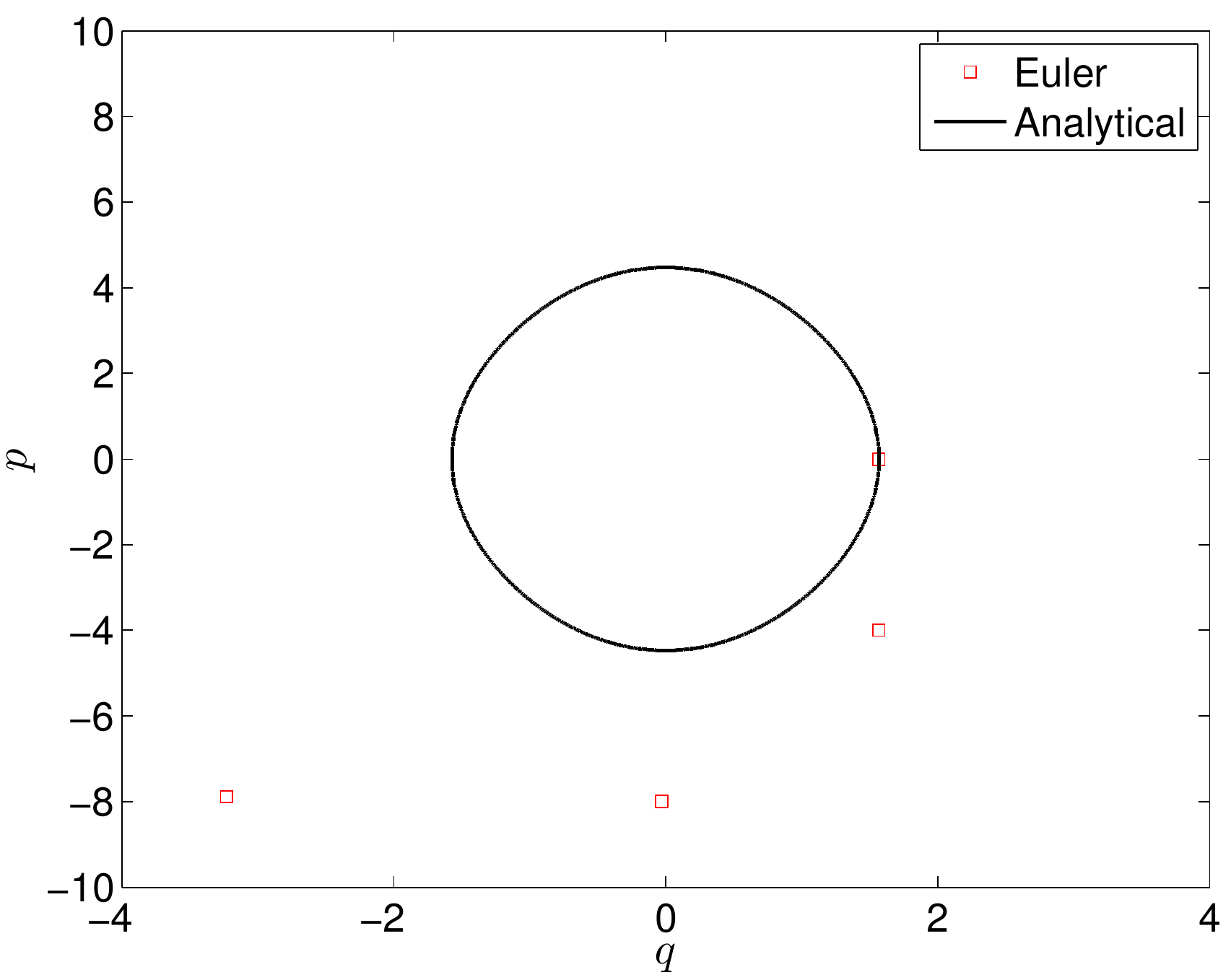}
						\label{fig:time_pendulum_phase_euler}
					}
					\subfigure{
				\includegraphics[width=0.35\textwidth]{./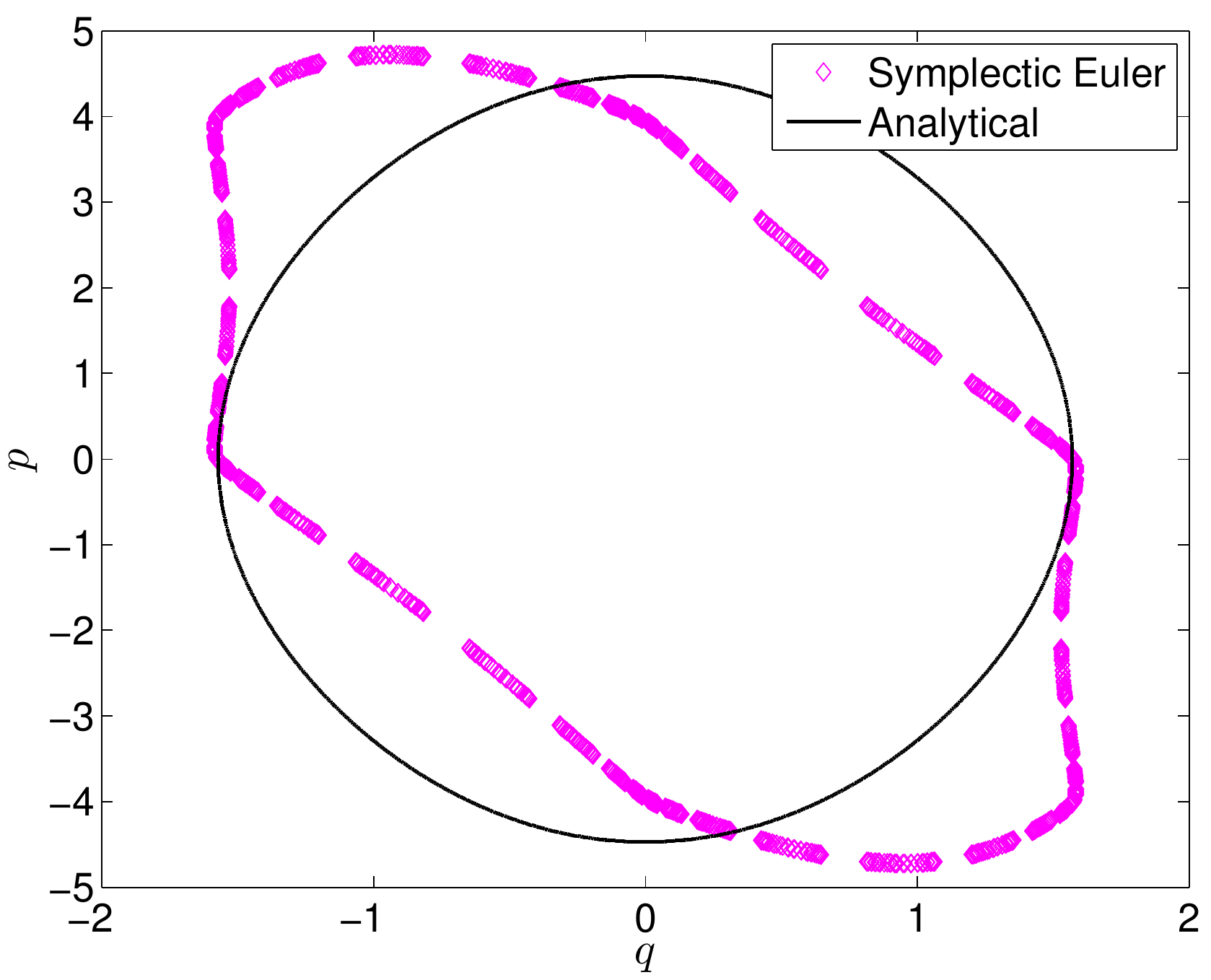}
						\label{fig:time_pendulum_phase_symplectic_euler}
					}
					\subfigure{
				\includegraphics[width=0.35\textwidth]{./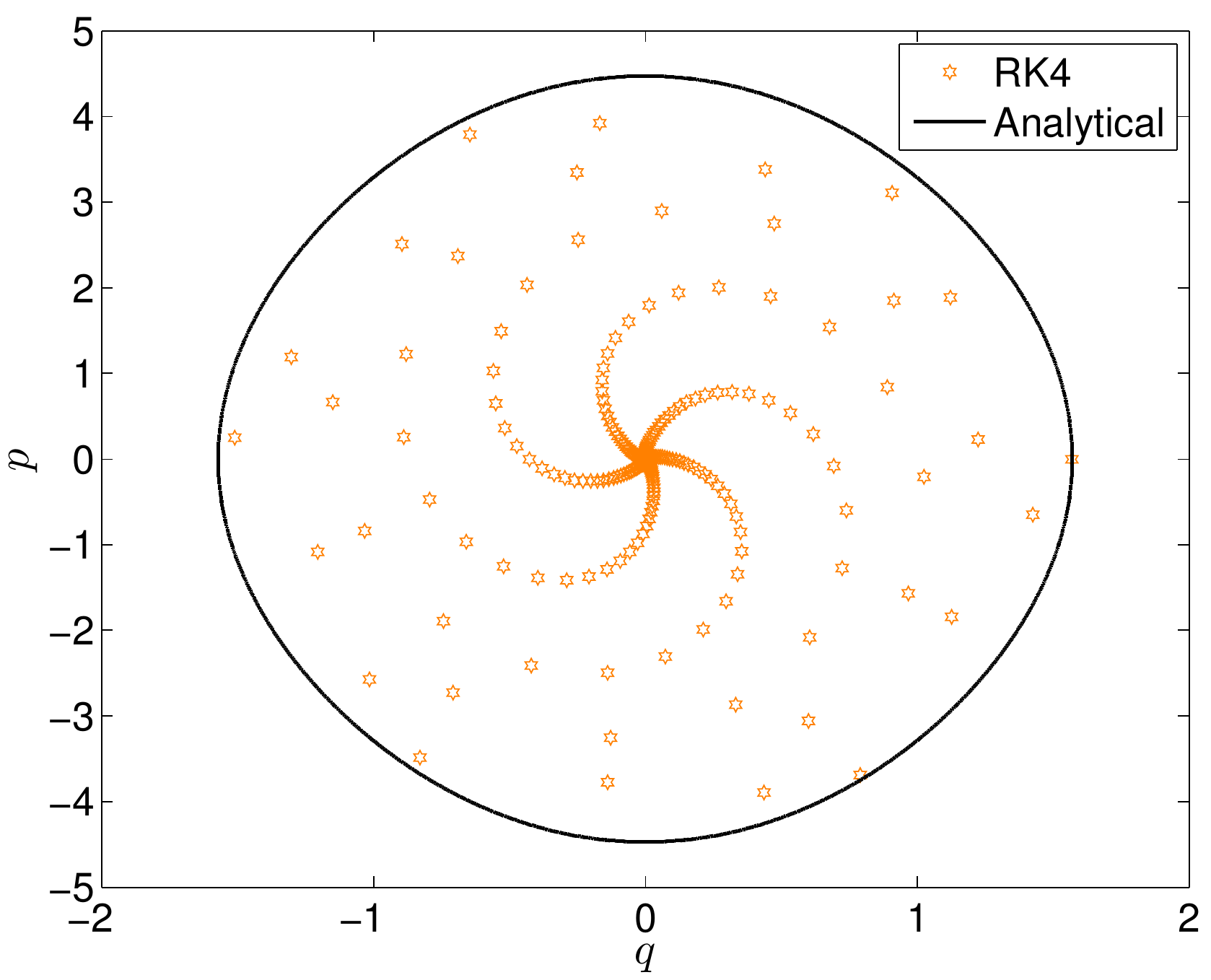}
						\label{fig:time_pendulum_phase_RK4}
					}
					\caption{Phase-space numerical (in color) and analytical (in black) solutions of pendulum system, \eqref{eq::pendulum}, with initial condition $q=\frac{\pi}{2}$, $p=0$ and time step $\Delta t = 0.4s$. Top left: mimetic canonical integrator with polynomial order in time $p_{t}=2$ (MCI2). Top right: mimetic Galerkin integrator with polynomial order in time $p_{t}=2$ (MGI2). Center left: explicit Euler scheme, which clearly diverges, with the system rapidly gaining energy. Center right: symplectic Euler scheme, which keeps a stable orbit but with a large difference in comparison to the analytical trajectory. Bottom: explicit Runge-Kutta of 4th order, which clearly is dissipative.}
					\label{fig:time_pendulum_phase}
				\end{figure}
				
				\begin{figure}[ht]
					\centering
					\subfigure{
								\includegraphics[width=0.44\textwidth]{./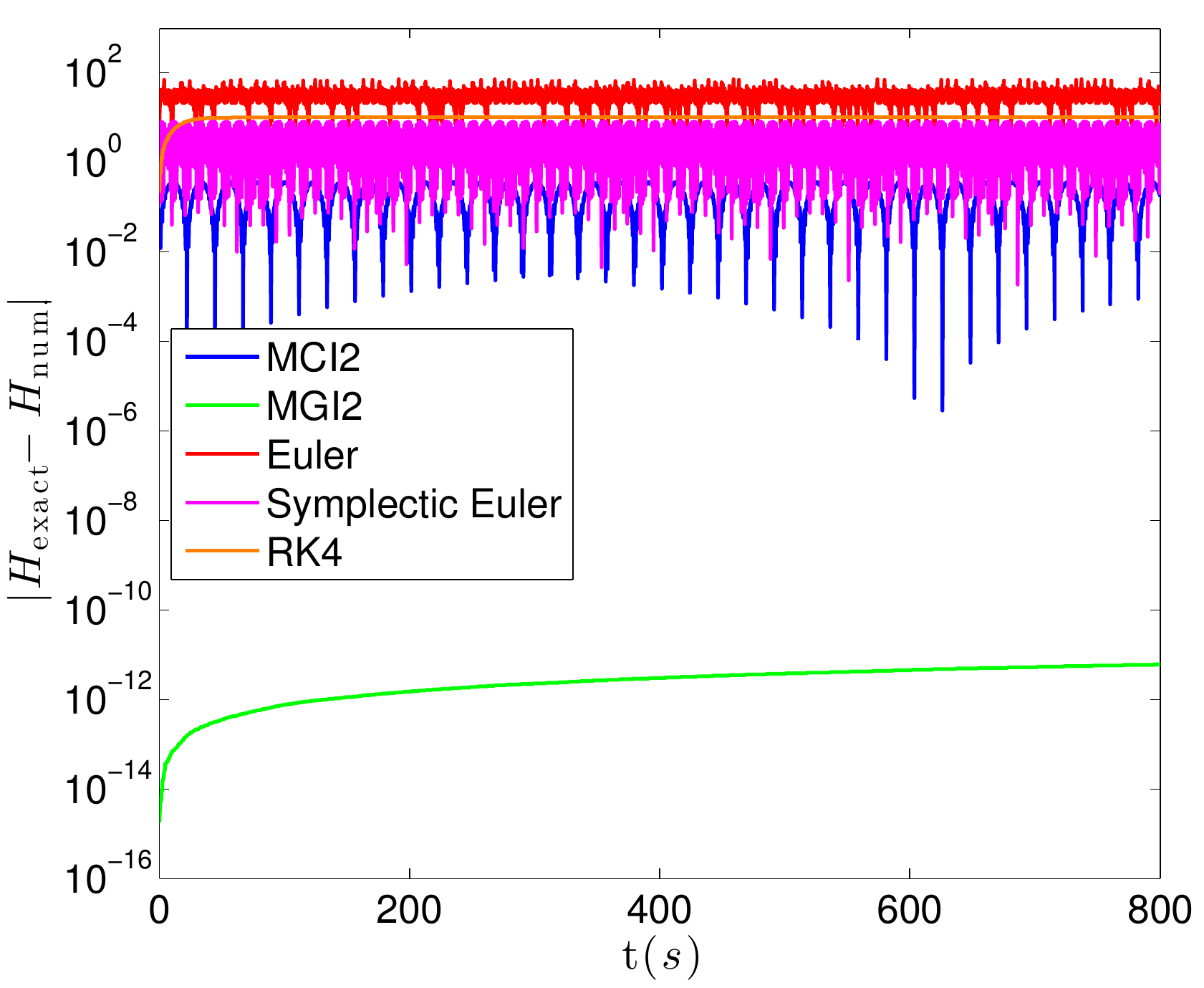}
						\label{fig:time_pendulum_error_time}
					}
					\subfigure{
								\includegraphics[width=0.44\textwidth]{./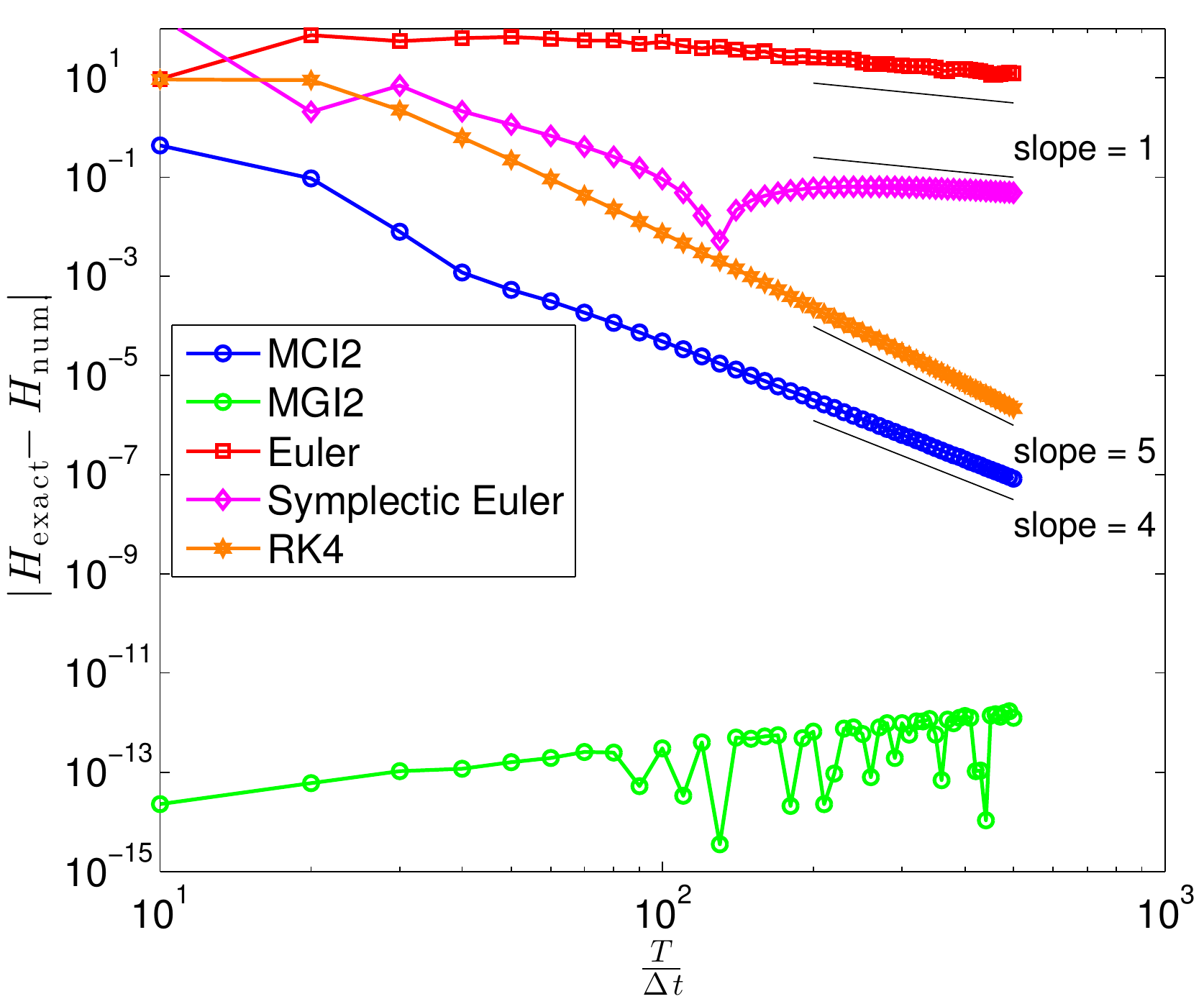}
						\label{fig:time_pendulum_error_convergence}
					}
					\caption{Left: Error in time in the Hamiltonian  $H = \frac{1}{2}p^{2} -10\cos q$ for the numerical solution of the pendulum problem, \eqref{eq::pendulum}, for $\Delta t=0.1s$, with the mimetic canonical integrator of polynomial order in time $p_{t}=2$ (MCI2), mimetic Galerkin integrator of polynomial order in time $p_{t}=2$ (MGI2), Runge-Kutta of 4th order, explicit Euler scheme and symplectic Euler method. Since the Hamiltonian, $H$, is not a quadratic term, the mimetic canonical integrator cannot exactly preserve this quantity along the trajectory, as can be seen. Nevertheless, the error remains bounded. On the other hand the mimetic Galerkin integrator exactly (up to machine precision) preserves it, since it is an exact energy preserving integrator. Both the explicit Euler and the Runge-Kutta methods, show an increase of the error with time. The symplectic Euler method, shows also that the error in the energy of the system is kept bounded. Right: Convergence of the error in $H$ as a function of $\frac{T}{\Delta t}$ for mimetic canonical integrator  of polynomial order in time $p_{t}=2$ (MCI2), mimetic Galerkin integrator of polynomial order in time $p_{t}=2$ (MGI2), Runge-Kutta of 4th order, explicit Euler and symplectic Euler. It is possible to observe the 4th order rate of convergence of the mimetic canonical integrator and the exact solution for the mimetic Galerkin integrator. The Runge-Kutta integrator shows a 5th order rate of convergence.}
					\label{fig:time_pendulum_error}
					\end{figure}

					\FloatBarrier

			\subsection{Two-body Kepler problem}
				Finally, the mimetic time integrators were applied to the solution of the two-body Kepler problem,
			\begin{equation}
				\left\{
					\begin{array}{l}
						\frac{\ederiv q_{1}}{\ederiv t} = p_{1}\\
						\frac{\ederiv q_{2}}{\ederiv t} = p_{2}\\
						\frac{\ederiv p_{1}}{\ederiv t} = -\frac{q_{1}}{\left(q_{1}^{2} + q_{2}^{2}\right)^{\frac{3}{2}}} \\
						\frac{\ederiv p_{2}}{\ederiv t} = -\frac{q_{2}}{\left(q_{1}^{2} + q_{2}^{2}\right)^{\frac{3}{2}}}
					\end{array}
				\right. \;,  \label{eq::kepler}
			\end{equation}
			a higher order Hamiltonian system with Hamiltonian $H(p_{1},p_{2},q_{1},q_{2}) = \frac{1}{2}\left(p_{1}^{2}+p_{2}^{2}\right) - \frac{1}{\sqrt{q_{1}^{2}+q_{2}^{2}}}$.
			
			This problem is also a Hamiltonian system and again both mimetic integrators show a clear advantage when compared to the Runge-Kutta integrator. As can be seen in \figref{fig:time_kepler_phase}, both mimetic integrators are capable of keeping the numerical trajectories close to the analytical solution, therefore retaining the qualitative behavior of the problem, whilst the Runge-Kutta integrator shows a slow inward spiraling behavior, once more indicating an artificial damping effect. The symplectic Euler shows a precession in the trajectory.

			Looking at the error in the Hamiltonian with time,\figref{fig:time_kepler_error} (left), one can clearly see that the mimetic Galerkin integrator exactly preserves, up to machine accuracy, the Hamiltonian. The mimetic canonical integrator, although not preserving the Hamiltonian exactly, keeps its error bounded. On the other hand, the Runge-Kutta and the Euler integrators quickly build up the error in the Hamiltonian. In \figref{fig:time_kepler_error} (right) one can see the convergence rates of the different integrators. The Runge-Kutta shows a convergence rate close to order five and the mimetic canonical integrator shows a convergence rate of order four. In both cases the mimetic Galerkin integrator exactly conserves the Hamiltonian, therefore the convergence rate is shown as zero since the error is always zero.

				\begin{figure}[ht]
				\centering
					\subfigure{
				\includegraphics[width=0.35\textwidth]{./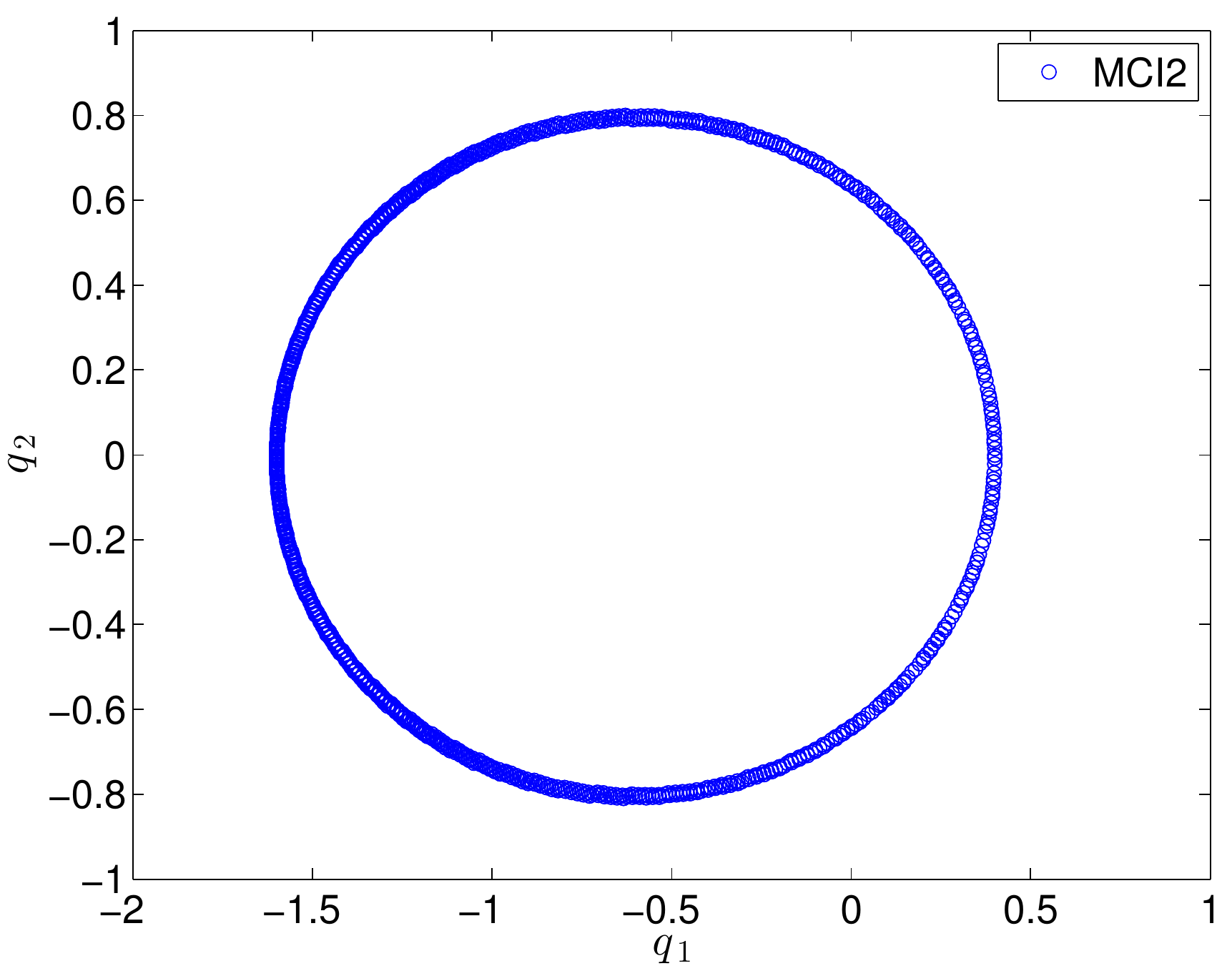}
						\label{fig:time_kepler_phase_mci2}
					}
					\subfigure{
				\includegraphics[width=0.35\textwidth]{./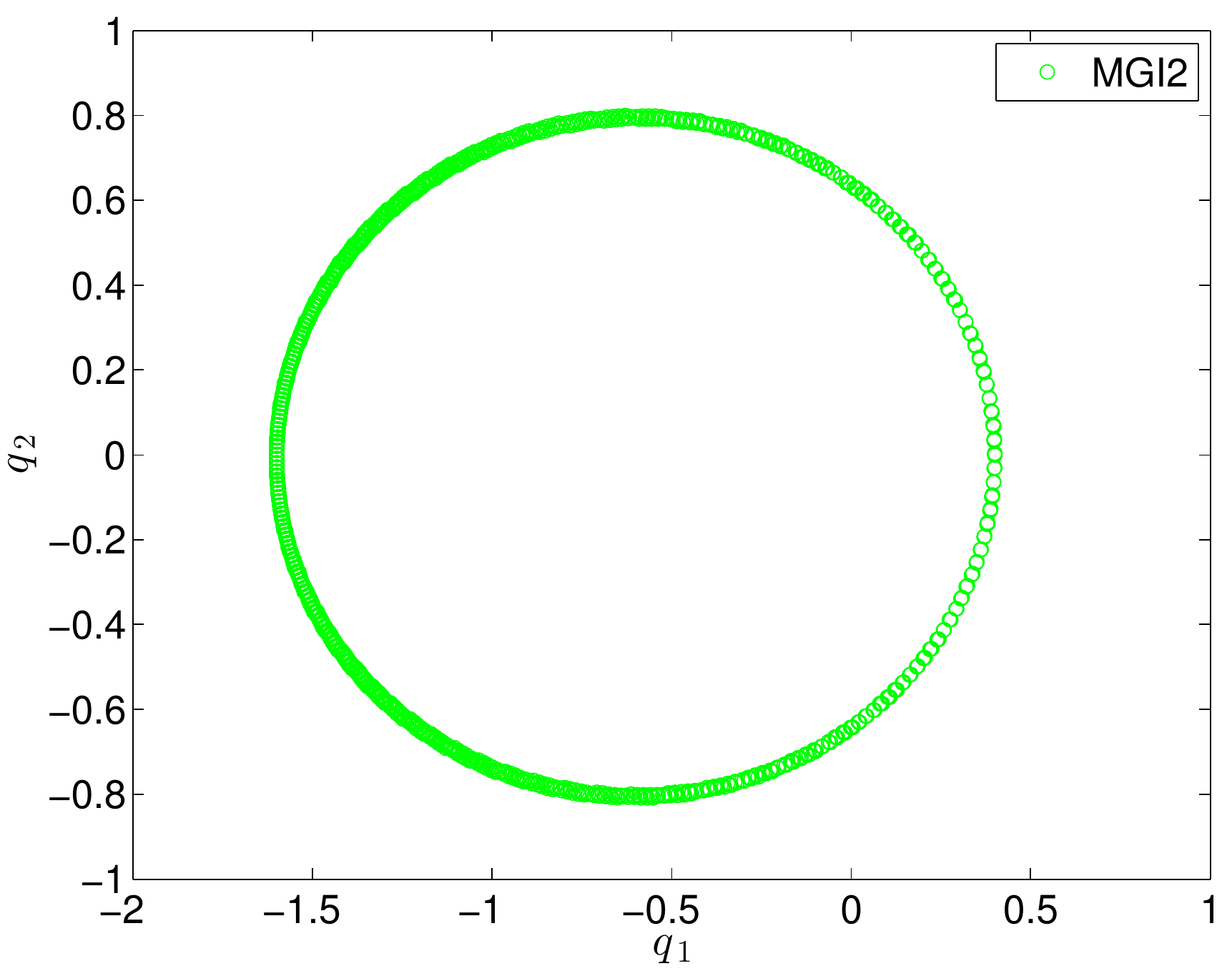}
						\label{fig:time_kepler_phase_mgi2}
					}
					\subfigure{
				\includegraphics[width=0.35\textwidth]{./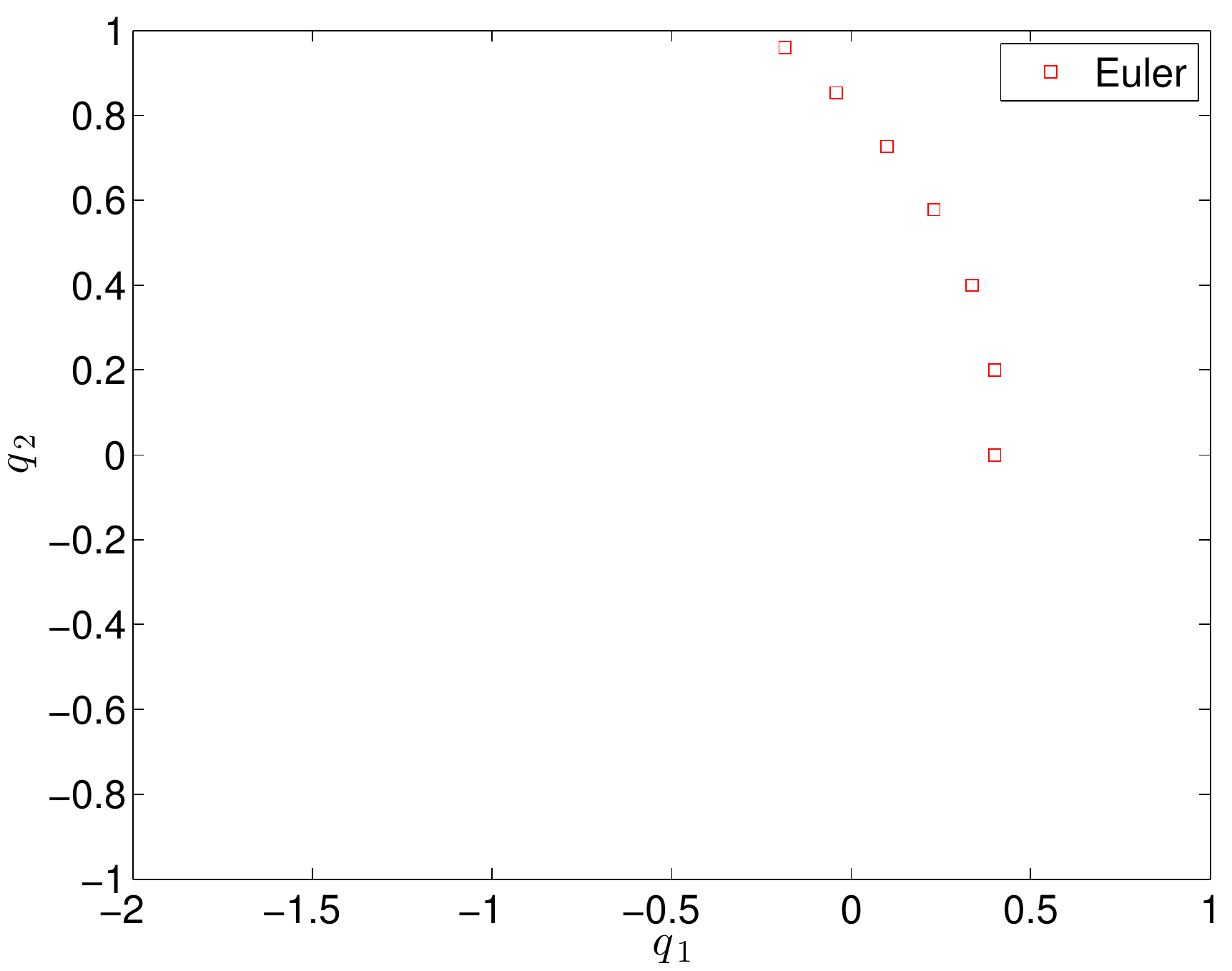}
						\label{fig:time_kepler_phase_euler}
					}
					\subfigure{
				\includegraphics[width=0.35\textwidth]{./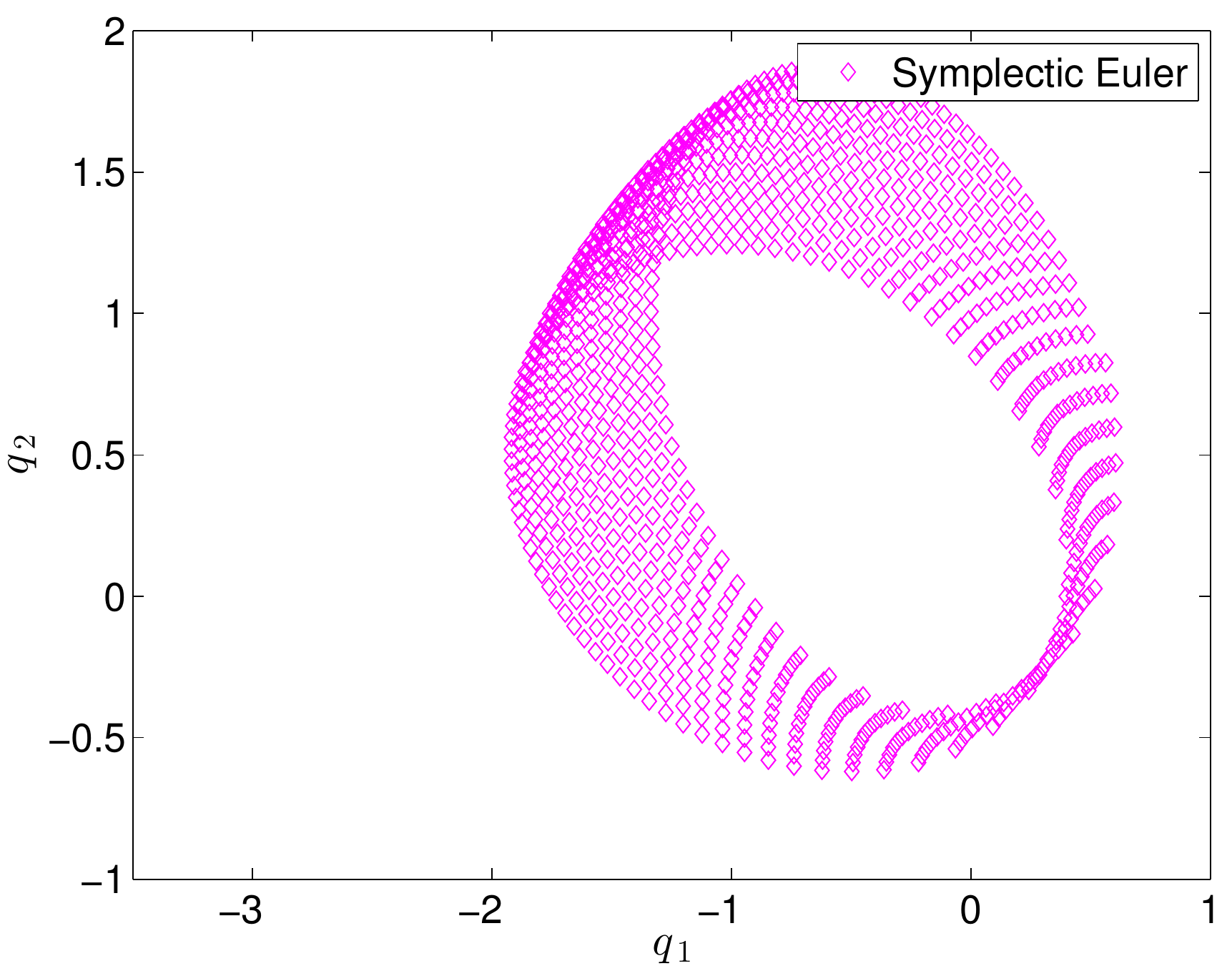}
						\label{fig:time_kepler_phase_symplectic_euler}
					}
					\subfigure{
				\includegraphics[width=0.35\textwidth]{./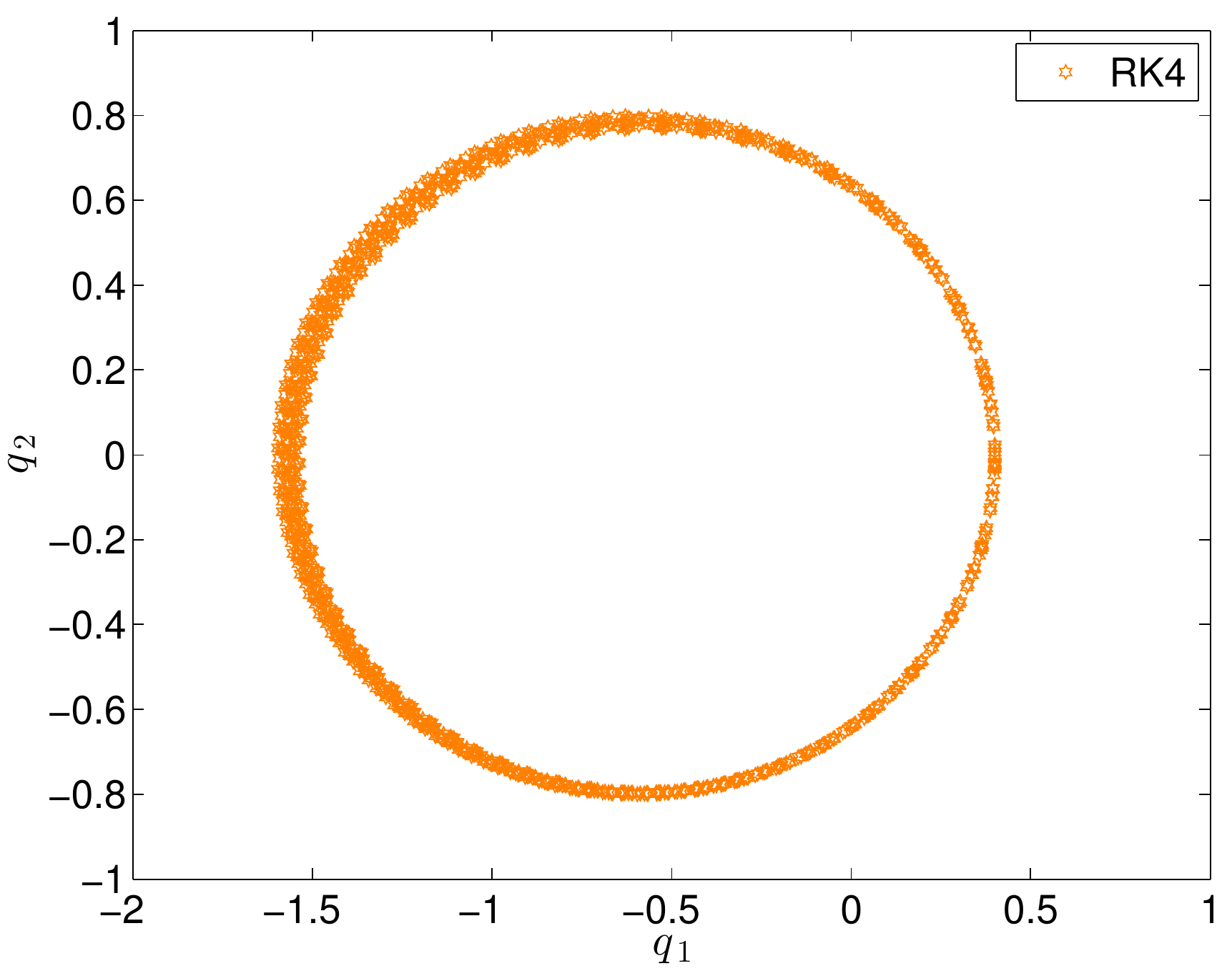}
						\label{fig:time_kepler_phase_RK4}
					}
					\caption{Phase-space numerical solution (in color) of Kepler two body system, \eqref{eq::kepler} with initial condition $q_{1}=0.4$,  $p_{1}=0$, $q_{2}=0$ and $p_{2}=2$ and time step $\Delta t = 0.1s$. Top left: mimetic canonical integrator with polynomial order in time $p_{t}=2$ (MCI2). Top right: mimetic Galerkin integrator with polynomial order in time $p_{t}=2$ (MGI2). Center left: explicit Euler scheme, where clearly the trajectory diverges, with the system rapidly gaining energy. Center right: symplectic Euler scheme, where one can see the precession in the trajectory. Bottom: explicit Runge-Kutta of 4th order, which diverges, loosing energy.}
					\label{fig:time_kepler_phase}
				\end{figure}

				\begin{figure}[ht]
					\centering
					\subfigure{
								\includegraphics[width=0.44\textwidth]{./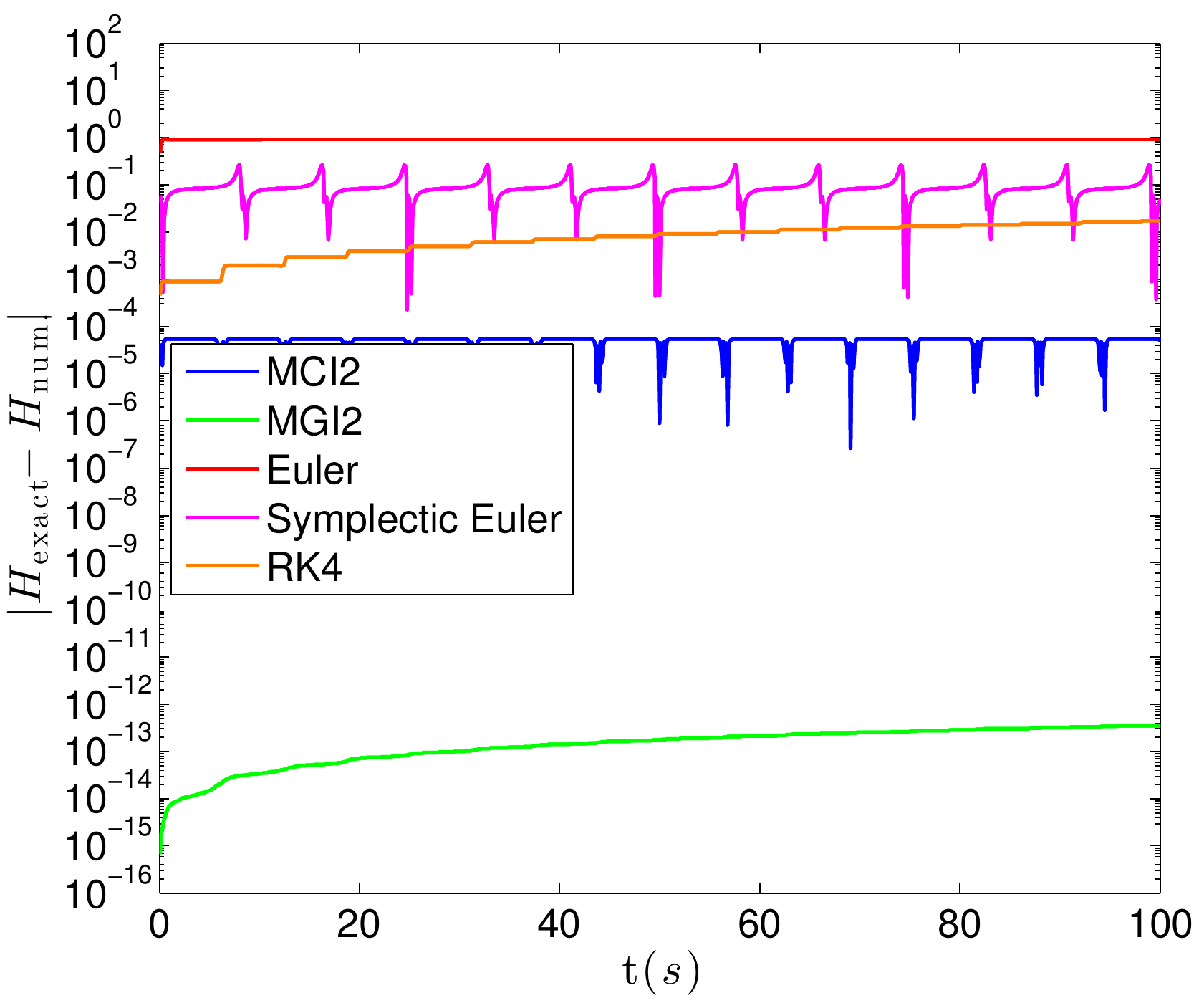}
						\label{fig:time_kepler_error_time}
					}
					\subfigure{
								\includegraphics[width=0.44\textwidth]{./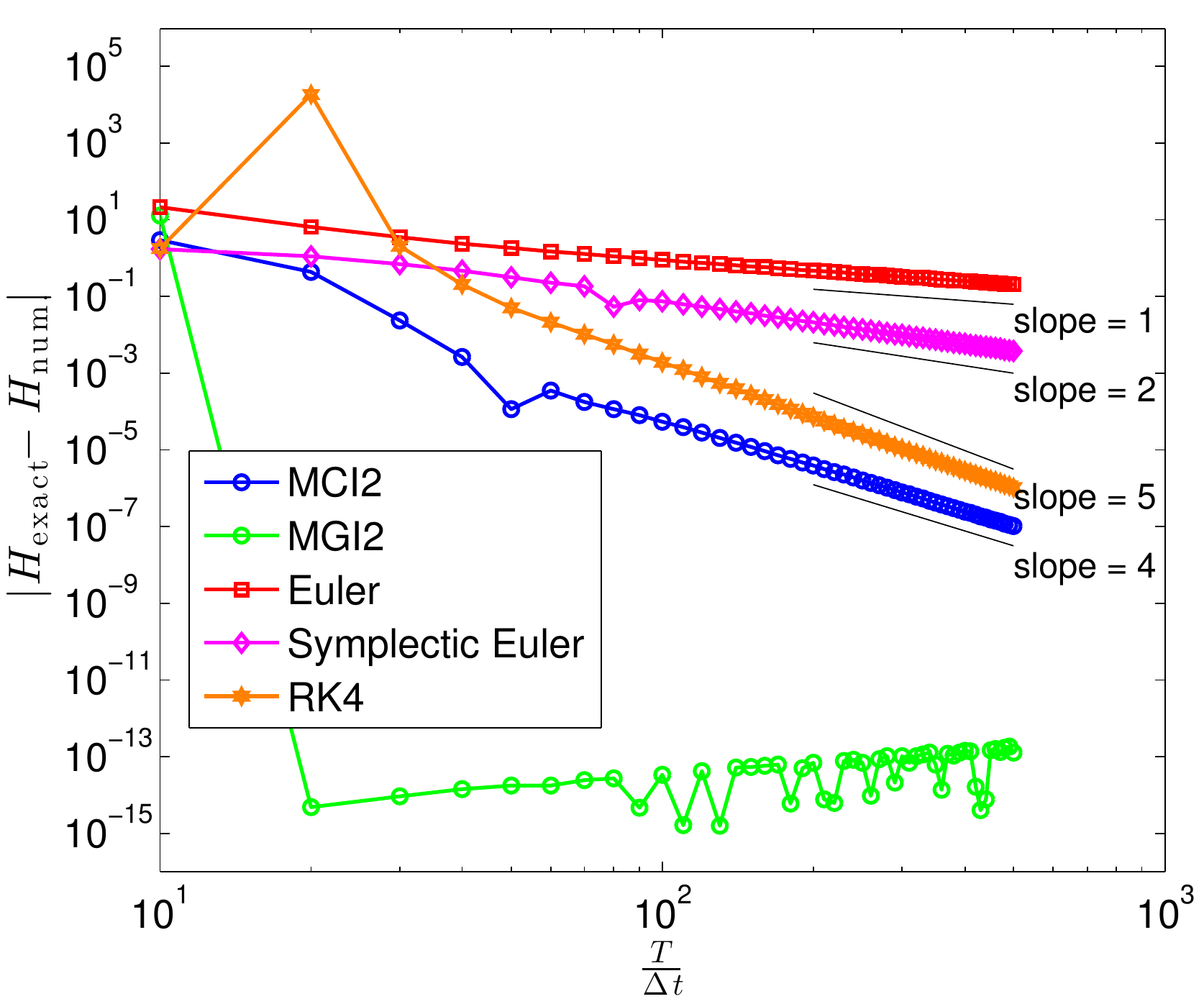}
						\label{fig:time_kepler_error_convergence}
					}
					\caption{Left: Error in time in the Hamiltonian $H = \frac{1}{2}\left(p_{1}^{2}+p_{2}^{2}\right) - \frac{1}{\sqrt{q_{1}^{2}+q_{2}^{2}}}$ for the numerical solution of the Kepler two body problem, \eqref{eq::kepler}, for $\Delta t = 0.1s$ with the mimetic canonical integrator of polynomial order in time $p_{t}=2$ (MCI2), mimetic Galerkin integrator of polynomial order in time $p_{t}=2$ (MGI2), Runge-Kutta of 4th order, explicit Euler scheme and symplectic Euler method. Since the Hamiltonian, $H$, is not a quadratic term, the mimetic canonical integrator cannot exactly preserve this quantity along the trajectory, as can be seen. Nevertheless, the error remains bounded. On the other hand the mimetic Galerkin integrator exactly (up to machine precision) preserves it, since it is an exact energy preserving integrator. Both the explicit Euler and the Runge-Kutta methods, show an increase of the error with time. The symplectic Euler manages to keep the error in the energy bounded. Right: Convergence of the error in $H$ as a function of the number of time steps for mimetic canonical integrator  of polynomial order in time $p_{t}=2$ (MCI2), mimetic Galerkin integrator of polynomial order in time $p_{t}=2$ (MGI2), Runge-Kutta of 4th order, explicit Euler and symplectic Euler. It is possible to observe the 4th order rate of convergence of the mimetic canonical integrator and the exact solution for the mimetic Galerkin integrator. The Runge-Kutta integrator shows a 5th order rate of convergence.}
					\label{fig:time_kepler_error}
					\end{figure}

	\FloatBarrier

\section{Summary and outlook}
\label{Section::Conclusions}

In this work we took as a starting point the mimetic framework established in \cite{Kreeft2011,Palha2014} and applied it to the numerical solution of systems of ordinary differential equations. This framework is based on two well established mathematical constructs: differential geometry (for the continuous formulation) and algebraic topology (for the discrete formulation). The connection between the two is established by a projection operator. With this framework, the topological relations can be exactly represented at the discrete level and all approximations lie in the metric dependent operators (e.g. Hodge-$\star$). Moreover, these metric dependent operators do not have a unique discrete representation. Therefore, for example, depending on the representation of the Hodge-$\star$ operator, it is possible to construct discretizations with different properties.

When this framework is applied to the solution of systems of ordinary differential equations, we have shown that two different time integrators can be obtained, one symplectic and the other exact energy preserving. The two integrators are obtained simply by choosing a specific discrete representation of the Hodge-$\star$ operator. For a discrete Hodge-$\star$ based on a canonical Hodge the integrator is symplectic, for a discrete Hodge-$\star$ based on a Galerkin Hodge the integrator is exact energy preserving.

In future work, we intend to build a physically accurate framework for time dependent systems of partial differential equations. Furthermore, the symmetry property that characterizes both integrators presented can play an important role, for instance, in the application of the mimetic framework to the advection equation. Moreover, given the properties of the integrators presented in this work, and the properties obtained for pure spatial discretizations, \cite{Palha2014,kreeft::stokes}, we also intend to explore a space-time formulation of the mimetic framework.


\section{Acknowledgments}
	Artur Palha was partially supported by the FCT (Foundation for Science and Technology, Portugal) grant SFRH/ BD/36093/2007.


\bibliographystyle{elsart-num-sort} 
\def\url#1{}
\bibliography{library_clean}

\end{document}